\newcommand{\ruitje}{\hfill $\lozenge$}
\newcommand{\pf}{\rightharpoonup}
\newcommand{\To}{\Rightarrow}
\newcommand{\denotes}{\!\downarrow}
\newcommand{\s}{\hspace{1pt}}
\newcommand{\mono}{\hookrightarrow}
\newcommand{\lval}{\llbracket}
\newcommand{\rval}{\rrbracket}
\newcommand{\id}{\mathop{\mathgroup\symoperators id}\nolimits}
\newcommand{\asm}{\mathsf{Asm}}
\newtheorem{thm}{Theorem}[section]
\newtheorem{lem}[thm]{Lemma}
\newtheorem{cor}[thm]{Corollary}
\newtheorem{prop}[thm]{Proposition}
\theoremstyle{definition}
\newtheorem{defi}[thm]{Definition}
\newtheorem{rem}[thm]{Remark}
\newtheorem{ex}[thm]{Example}
\newtheorem{conv}[thm]{Convention}
\author{Jetze Zoethout\\
Department of Mathematics, Utrecht University}
\date{October 18, 2019}
\title{Internal Partial Combinatory Algebras and their Slices}
\begin{document}

\maketitle

\centerline{\textsc{Abstract}}

\begin{footnotesize}{\setlength{\parindent}{0pt}
A partial combinatory algebra (PCA) is a set equipped with a partial binary operation that models a notion of computability. This paper studies a generalization of PCAs, introduced by W.\@ Stekelenburg (\cite{wps}), where a PCA is not a set but an object in a given regular category. The corresponding class of categories of assemblies is closed both under taking small products and under slicing, which is to be contrasted with the situation for ordinary PCAs. We describe these two constructions explicitly at the level of PCAs, allowing us to compute a number of examples of products and slices of PCAs. Moreover, we show how PCAs can be transported along regular functors, enabling us to compare PCAs constructed over different base categories. Via a Grothendieck construction, this leads to a (2-)category whose objects are PCAs and whose arrows are generalized applicative morphisms. This category has small products, which correspond to the small products of categories of assemblies, and it has finite coproducts in a weak sense. Finally, we give a criterion when a functor between categories of assemblies that is induced by an applicative morphism has a right adjoint, by generalizing the notion of computational density from \cite{hofstrajaap}.
}\end{footnotesize}

\section{Introduction}

A partial combinatory algebra (PCA) is an abstract model of computation that generalizes the classical notion of computability on the set of natural numbers. A more precise definition will be given below. These models can be studied from the point of view of category theory. Every PCA $A$ gives rise to a category of assemblies $\asm(A)$, which may be viewed as the category of all data types that can be implemented in $A$. Moreover, the ex/reg completion of $\asm(A)$ is always a topos, called the realizability topos of $A$ and denoted by $\mathsf{RT}(A)$. In this topos, the internal logic is governed by computability in the model $A$.

The fundamental theorem of topos theory states that a slice category of a topos is again a topos. This implies that a category of the form $\mathsf{RT}(A)/I$ is also a topos. However, it is not in general a realizability topos, which we can show as follows. In every realizability topos, the terminal object is projective. In the slice topos $\mathsf{RT}(A)/I$, on the other hand, the terminal object is projective if and only if $I$ itself is projective in $\mathsf{RT}(A)$. Therefore, if we let $I$ be a \emph{non-projective} object of $\mathsf{RT}(A)$, which almost always exists, then $\mathsf{RT}(A)/I$ will not be a realizability topos. Similar observations apply for categories of assemblies. Explicitly, $\asm(A)$ is always a quasitopos, and quasitoposes are closed under slicing. However, a slice of $\asm(A)$ is not in general a category of assemblies, for the same reason we presented above.

This leads to the question whether there is a natural class of categories that contains all categories of the form $\asm(A)$ and \emph{is} closed under slicing. Recently, J.~Frey has given an extensional characterization of toposes of the form $\mathsf{RT}(A)$, where $A$ is a PCA (\cite{frey2}, Theorem 4.6). This characterization provides an important hint as to where to look for such a class. But first, let us give a more precise definition of a PCA.
\begin{defi}\label{PCA_intro}
A \emph{partial combinatory algebra} (PCA) is a nonempty set $A$ equipped with a \emph{partial} binary map $A\times A\pf A\colon (a,b)\mapsto ab$, called \emph{application}, for which there exist $\mathsf{k},\mathsf{s}\in A$ such that
\begin{itemize}
\item[(i)]	$(\mathsf{k}a)b$ is defined and equal to $a$;
\item[(ii)]	$(\mathsf{s}a)b$ is defined;
\item[(iii)]	if $(ac)(bc)$ is defined, then $((\mathsf{s}a)b)c$ is defined and equal to $(ac)(bc)$,
\end{itemize}
for all $a,b,c\in A$. \ruitje
\end{defi}
\begin{rem}
Other sources employ a slightly stronger definition of PCA, where in item (iii), $((\mathsf{s}a)b)c$ should be defined \emph{exactly} when $(ac)(bc)$ is defined. A PCA in our sense is then called a \emph{weak} PCA. It has been shown (\cite{weakPCA}, Theorem 5.1) that there is no essential difference between these two notions. We choose the above as our definition of a PCA because for our purposes, it is more pleasant to work with. \ruitje
\end{rem}
It follows from this definition that $A$ satisfies an abstract version of the $Smn$-theorem: every expression built using the application map can be computed using an element from $A$ itself. We refer to \cref{first_section} for a more precise formulation of `expression' and what it means to compute such an expression (Definitions 2.2 and 2.3).

A well-known generalization of a PCA is that of a \emph{relative} PCA. A relative PCA is a pair $(A,C)$, where $A$ is a PCA and $C$ is a subset of $A$ that is closed under the application map from $A$, and such that suitable elements $\mathsf{k}$ and $\mathsf{s}$ as in \cref{PCA_intro} may be found in $C$. We regard the elements of $C$ as `computable' elements that may act on possibly non-computable data. A certain operation then counts as computable if it is computed by some element from $C$. The constructions $\asm$ and $\mathsf{RT}$ mentioned above can be generalized to these relative PCAs.

A crucial ingredient in Frey's characterization is the fact that toposes of the form $\mathsf{RT}(A)$ carry a geometric inclusion $\mathsf{Set}\mono\mathsf{RT}(A)$, where the inverse image is the global sections functor. Similarly, categories of the form $\asm(A)$ allow an adjunction with $\mathsf{Set}$, where the left adjoint is the global sections functor. Slicing over an assembly $I$ affects this adjunction in two important ways. First of all, we get an adjunction with a \emph{slice} $\mathsf{Set}/|I|$ of $\mathsf{Set}$, rather than $\mathsf{Set}$ itself. This suggests that we should allow for other `base categories' than $\mathsf{Set}$. Second, the left adjoint $\asm(A)/I\to \mathsf{Set}/|I|$ ceases to be the global sections functor. This is no surprise, since the codomain of this left adjoint is no longer $\mathsf{Set}$. But we can even say something stronger: the left adjoint does not even commute with the global sections functors $\asm(A)/I\to\mathsf{Set}$ and $\mathsf{Set}/|I|\to\mathsf{Set}$. This situation, where the left adjoint of the adjunction is not the global sections functor, is typical of relative realizability (see also \cite{frey}, Corollary 4.11.7(i)). This suggests that we should allow for some kind of \emph{relative} PCA.

A class of PCA-like structures satisfying both these desiderata was described by W.~Stekelenburg in his PhD thesis \cite{wps}. Stekelenburg considers an even more general notion of relative realizability than the one described above, where certain (nonempty) subsets of $A$ are declared to be `realizing sets'. In this setting, an operation counts as computable if there is a realizing set, all whose elements compute this operation. This more general notion is crucial for removing the difficulty surrounding the projective terminal object mentioned above, as we will explain in \cref{section_slice} below.

We explain the relevant notions from \cite{wps} in \cref{first_section} below, where we shall simply use `PCA' to refer to the generalized notion of a PCA. We shall use `classical (relative) PCA' to refer to the notion defined in \cref{PCA_intro}. Stekelenburg also defined a notion of morphism between PCAs over a given base category, which generalizes a notion formulated by Longley (\cite{longleyphd}). In \cref{section_appmor}, we define such a notion for PCAs over \emph{different} base categories, making PCAs the objects of a 2-category. We investigate the interaction of this 2-category with the construction $\asm$ in \cref{section_asm}. Then, in \cref{section_slice}, we present an explicit description of the slice of a category of assemblies, and we use this description to calculate a number of examples of slices. Finally, we discuss the notion of computational density (see \cite{hofstrajaap}) in the present setup, in \cref{section_cd}.

Since we will be working with 2-categories, a few comments on terminology are in order. In general, the prefix `2-' will signify that we discuss a notion enriched over categories. Thus, a 2-category has a strictly associative and unital composition of 1-cells, and a 2-functor strictly preserves the identity 1-cells and the composition of 1-cells. A 2-(co)product is a (co)product whose universal property is expressed by an isomorphism of categories. We use the term `pseudo(co)product', on the other hand, for a (co)product with a universal property expressed by an equivalence of categories, rather than an isomorphism.

We also mention that, even though we replace the category of sets by a more general category, we still presuppose some ambient set theory to work in. In particular, we suppose we have a notion of `small'.

Finally, I wish to thank my PhD supervisor Jaap van Oosten, with whom I have had many constructive conversations on the topics discussed here, and who has provided countless valuable comments on earlier draft versions of this paper.

\section{PCAs and Assemblies}\label{first_section}

In this section, we introduce our main object of study: relative partial combinatory algebras constructed over a regular category. Many of the definitions provided in this section can also be found in some form in \cite{wps}. We deviate in one important respect from \cite{wps}, because all our base categories will be regular categories, rather than Heyting categories. This also requires adjusting certain definitions from \cite{wps} so that they work for this more general context.
 
Throughout this section, we will work with a fixed locally small regular category $\mathcal{C}$. Such a category soundly interprets (typed) regular logic, that is, the logic of $=$, $\top$, $\wedge$ and $\exists$. If $\varphi(x_1,\ldots, x_n)$ is a regular formula and $x_i$ is of type $X_i$, then we denote its interpretation in $\mathcal{C}$ by \[\{(x_1, \ldots, x_n)\in X_1\times\cdots\times X_n\mid \varphi(x_1, \ldots, x_n)\}\subseteq X_1\times\cdots\times X_n.\]
(Here we do not require that all the variables $x_1,\ldots,x_n$ actually occur free in $\varphi$.) A regular sequent is an expression of the form $\varphi\vdash_\Gamma\psi$, where $\Gamma$ is a context of typed variables and $\varphi$ and $\psi$ are regular formulas whose free variables are among $\Gamma$. If $\Gamma = x_1, \ldots, x_n$ and $x_i$ is of type $X_i$, then such a sequent is valid in $\mathcal{C}$ if
\[\{(x_1, \ldots, x_n)\in X_1\times\cdots\times X_n\mid \varphi\}\subseteq \{(x_1, \ldots, x_n)\in X_1\times\cdots\times X_n\mid \psi\}.\]
In this case, we write $\varphi\models_\Gamma\psi$, or $\mathcal{C}:\varphi\models_\Gamma\psi$ if we need to clarify in which category we are working.


As is customary when working with an internal logic, we will freely use subobjects and arrows of $\mathcal{C}$ as relation resp.\@ function symbols of our language. We will frequently, and usually implicitly, employ the soundness of the interpretation to derive that certain regular sequents are valid in $\mathcal{C}$ given that others are. If we give such a soundness argument explicitly, we will signal this to the reader by writing `reason inside $\mathcal{C}$'.

We start by defining the suitable generalization of a partial applicative structure.
\begin{defi}
A \emph{partial applicative structure} over $\mathcal{C}$ (PAS) is an inhabited object $A$ of $\mathcal{C}$ equipped with a partial binary map $A\times A\pf A$, called \emph{application}. Explicitly, the application map is given by a subobject $D\subseteq A\times A$ and an arrow $D\to A\colon (a,b)\mapsto a\cdot b$. \ruitje
\end{defi}

We write $a\cdot b\denotes$ for the formula $D(a,b)$. When no confusion can arise, we will just write $ab$ instead of $a\cdot b$. In general, application maps will not be associative. In order to avoid an unmanageable number of brackets, we adopt the convention the application associates to the left, that is, we write $abc$ as an abbreviation of $(ab)c$.

In order to define combinatorial completeness for PASs, we first need to introduce terms. Suppose that a countable stock of distinct variables is given; we shall use $x,y,z$ to range over variables.

\begin{defi}
The set of \emph{terms} is defined recursively by:
\begin{itemize}
\item[(i)]	every variable is a term;
\item[(ii)]	if $s$ and $t$ are terms, then $(s\cdot t)$ is a term as well. \ruitje
\end{itemize}
\end{defi}
The conventions for application also apply to terms. That is, we omit $\cdot$ and brackets whenever possible, subject to the stipulation that $rst$ abbreviates $(rs)t$. Every term $t = t(x_0, \ldots, x_n)$, where $n\geq 0$, determines a partial map $A^{n+1}\pf A$ in the obvious way, and we denote this map by $\lambda\vec{x}. t$. The domain of $\lambda\vec{x}.t$ can be expressed by a regular formula involving $D$ and the application map. We abbreviate this formula by $t(\vec{a})\denotes$, where $\vec{a}:A^{n+1}$. For example, $abc\denotes$ may be expressed as $D(a,b)\wedge D(ab,c)$. One may object that the function symbol for application is a unary function symbol with domain $D$, rather than a binary function symbol taking inputs from $A$. We can circumvent this difficulty by expressing the application map by a tertiary single-valued relation symbol on $A$, expressing `$ab=c$'. The formula $abc\denotes$ may then be rendered as $\exists w:A\s (ab=w\wedge D(w,c))$. Likewise, if $t(\vec{a})\!\denotes$, then we will freely use the expression $t(\vec{a})$ in our formulas. E.g., we may write $abc\denotes\wedge\ \varphi(abc)$, which should really be read as $\exists v,w:A\s (ab=w\wedge wc=v\wedge \varphi(v))$.

\begin{defi}\label{realizing_set}
Let $A$ be a PAS.
\begin{itemize}
\item[(i)]	We write $\mathcal{P}^\ast A$ for the \emph{set} of inhabited subobjects of $A$, that is, the set of subobjects $U\subseteq A$ such that $\models \exists x:A\s(U(x))$, or equivalently, $U\to 1$ is regular epi.
\item[(ii)]	(Cf.\@ \cite{wps}, Definition 1.2.4.) Let $t(\vec{x},y)$ be a term. We say that $U\in\mathcal{P}^\ast A$ \emph{realizes} $\lambda\vec{x}.t$ if:
\begin{itemize}
\item	$U(r)\models_{r,\vec{a}:A} r\vec{a}\denotes$, and
\item	$t(\vec{a},b)\denotes \wedge\ U(r) \models_{r,\vec{a},b:A} r\vec{a}b\denotes\wedge\ r\vec{a}b = t(\vec{a},b)$.
\end{itemize}
(Here the tuple $\vec{a}$ has the same length as $\vec{x}$.) \ruitje
\end{itemize}
\end{defi}

\begin{defi}
(Cf.\@ \cite{wps}, Definition 1.3.14.) Let $A$ be a PAS.
\begin{itemize}
\item[(i)]	We make $\mathcal{P}^\ast A$ into a PAS over $\mathsf{Set}$ in the following way. For $U,V\in\mathcal{P}^\ast A$, we say that $UV\denotes$ if $U\times V\subseteq D$. In this case, $UV$ is the image of $U\times V\subseteq D\to A$, which is again an element of $\mathcal{P}^\ast A$.
\item[(ii)]	A \emph{filter} $\phi$ on $A$ is a subset of $\mathcal{P}^\ast A$ satisfying the following two conditions:
\begin{itemize}
\item	$\phi$ is upwards closed, i.e., if $U\subseteq V$ and $U\in\phi$, then $V\in \phi$;
\item	$\phi$ is closed under application, i.e., if $U,V\in\phi$ and $UV\denotes$, then $UV\in\phi$.
\end{itemize}
\item[(iii)]	A set $G\subseteq\mathcal{P}^\ast A$ is called \emph{combinatorially complete} if, for every term $t(\vec{x})$, there exists a $U\in G$ realizing $\lambda\vec{x}.t$. 
\item[(iv)]		If $\phi$ is a combinatorially complete filter on $A$, then $(A,\phi)$ is called a \emph{\textup{(}relative\textup{)} partial combinatory algebra} over $\mathcal{C}$ (PCA). \ruitje
\end{itemize}
\end{defi}
\begin{defi}
The set $\mathcal{P}^\ast A$ is always a filter, and it is clearly the largest possible filter. We call this filter the \emph{maximal filter} on $A$. If this filter is combinatorially complete, we say that $A$ is an \emph{absolute} PCA. \ruitje
\end{defi}
\begin{defi}\label{generated_singletons}
(Cf.\@ \cite{wps}, Definition 2.4.1.) More generally, we may select a privileged set $C$ consisting of global sections of $A$ that is closed under application: if $a,b\in C$ and $ab\denotes$, then $ab\in C$. Then \[\phi_C = \{U\subseteq A\mid \exists a\in C\ (a\subseteq U)\}\] is a filter, called the filter \emph{generated} by $C$. We say that such a filter is \emph{generated by singletons}. This filter is combinatorially complete if and only if for every term $t(\vec{x})$, there is an element from $C$ that realizes $\lambda\vec{x}.t$. We also write $(A,C)$ instead of $(A,\phi_C)$.
\ruitje
\end{defi}
\begin{ex}
Suppose that $\mathcal{C}=\mathsf{Set}$, so that a PAS is just a set $A$ equipped with a binary partial function. Then the maximal filter is combinatorially complete if and only if $A$ is a classical PCA. If $C\subseteq A$ is a set of elements of $A$ that is closed under application, then the filter generated by $C$ is combinatorially complete if and only if $(A,C)$ is a classical relative PCA. \ruitje
\end{ex}

In order to work with realizing sets efficiently, we generalize the notation of \cref{realizing_set}.
\begin{defi}\label{convention_parameters}
Let $A$ be a PAS, let $t(y_0,\ldots,y_{m-1},x_0,\ldots,x_n)$ be a term, and let $\vec{U} = U_0,\ldots, U_{m-1}$ be from $\mathcal{P}^\ast A$. If $V\in\mathcal{P}^\ast A$, then we say that $V$ realizes $\lambda\vec{x}.t(\vec{U},\vec{x})$ if there exists a $W$ realizing $\lambda\vec{y}\s\vec{x}.t$ such that $V\subseteq W\vec{U}$. \ruitje
\end{defi}
\begin{rem}
This notation will occasionally create a slight ambiguity. For example, if we say that $V$ realizes $\lambda x.UU$, then this might arise from either $t(y,x) = yy$ or from $t(y,z,x) = yz$. Therefore, we also adopt the following convention: if we write a subobject $U$ of $A$ more than once in a term, then we assume we have substituted all these occurrences for the same variable, that is, we go with the first option. This is only for the sake of definiteness; in practice it does not matter which option one uses, except for the fact that the first option introduces fewer variables.
\ruitje
\end{rem}
With \cref{convention_parameters}, we have the following generalization of combinatorial completeness: if $t(\vec{y},\vec{x})$ is a term, where the tuple $\vec{x}$ is nonempty, and $\vec{U}\in\phi$, then there exists a $V\in\phi$ realizing $\lambda\vec{x}.t(\vec{U},\vec{x})$. Indeed, we may first take a $W\in\phi$ realizing $\lambda\vec{y}\vec{x}.t$ itself, and then set $V=W\vec{U}$. We also note the following important property of realizing sets: if $V$ realizes $\lambda\vec{x}.t(\vec{U},\vec{x})$ and $\vec{W}$ is a tuple of inhabited subobjects from $A$ such that $t(\vec{U},\vec{W})\denotes$, then $V\vec{W}$ is defined as well, and it is a subobject of $t(\vec{U},\vec{W})$.

In the usual way, we have the following result.
\begin{lem}
Let $A$ be a PAS and let $\phi$ be a filter on $A$. Then $(A,\phi)$ is a PCA if and only if $\phi$ contains sets $\mathsf{K}$ and $\mathsf{S}$ realizing $\lambda xy.x$ and $\lambda xyz. xz(yz)$ respectively.\qed
\end{lem}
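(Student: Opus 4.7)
The plan is that the forward direction is immediate from the definition, while the converse is carried out by a standard bracket-abstraction argument adapted to the present framework of realizing sets.

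If $(A,\phi)$ is a PCA, then applying combinatorial completeness to the terms $t(x,y)=x$ and $t(x,y,z)=xz(yz)$ directly supplies sets $\mathsf{K},\mathsf{S}\in\phi$ with the required realizing properties.

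For the converse, I would introduce a bracket operation $[x](-)$ on terms by recursion:
\[
[x]x := \mathsf{S}\mathsf{K}\mathsf{K}, \qquad [x]t := \mathsf{K}t \text{ when } x\notin\mathrm{FV}(t), \qquad [x](uv) := \mathsf{S}\,([x]u)\,([x]v),
\]
where in the middle clause $t$ may itself be a parameter subobject $U_i\in\phi$ corresponding to another free variable. Since $\phi$ contains $\mathsf{K}$ and $\mathsf{S}$ and is closed under application, $[x]t$ always lies in $\phi$. I would then prove by induction on the structure of $t$ that $[x]t$ realizes $\lambda x.t(\vec{U},x)$ in the sense of \cref{realizing_set}. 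Combinatorial completeness for a multi-variable term $t(\vec{x})$ follows by iterating the bracket: $[x_0][x_1]\cdots[x_n]t$ is an element of $\phi$ realizing $\lambda\vec{x}.t$.

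The main obstacle is matching the two clauses of the realizing-set definition at each inductive step. For the base case, clause (i) for $\mathsf{S}$ gives that $\mathsf{S}\mathsf{K}\mathsf{K}a$ is always defined, and then clause (ii) for $\mathsf{S}$—combined with the realizing property of $\mathsf{K}$, which provides $\mathsf{K}a(\mathsf{K}a)\denotes$ and evaluates it to $a$—gives $\mathsf{S}\mathsf{K}\mathsf{K}a = a$. In the application case, the desired equation $\mathsf{S}\,([x]u)\,([x]v)\,a = u[a/x]\cdot v[a/x]$ only needs to hold under the hypothesis that the right-hand side is defined, matching exactly clause (ii) for $\lambda x.uv$; meanwhile, the total definedness required by clause (i) is delivered by clause (i) for $\mathsf{S}$ applied to the elements $[x]u,[x]v\in\phi$ and $a$. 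Once this defined-versus-equal bookkeeping is in place, the induction runs through in the familiar way, with all manipulations valid as regular-logic reasoning inside $\mathcal{C}$.
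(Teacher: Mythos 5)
Your overall strategy is the intended one (the paper omits this proof as routine, and the standard route is exactly bracket abstraction), and the forward direction is indeed immediate. But the bracket you write down is the ``optimized'' one, and its middle clause is precisely where the argument breaks in the partial setting. As written, $[x]t:=\mathsf{K}t$ applies to any term with $x\notin\mathrm{FV}(t)$, including compound terms; the resulting abstract then need not denote under substitution, which destroys the first clause of \cref{realizing_set} -- the requirement that $r\vec{a}$ be defined for \emph{all} choices of the initial arguments. Concretely, take $t(x_0,x_1,x_2)=(x_0x_1)x_2$. Your recursion gives $[x_2]t=\mathsf{S}(\mathsf{K}(x_0x_1))(\mathsf{S}\mathsf{K}\mathsf{K})$, so after abstracting $x_1$ and $x_0$, the best the inductive bracket lemma yields for the resulting realizer $r$ is that $ra_0a_1$ is defined and equal to $\mathsf{S}(\mathsf{K}(a_0a_1))(\mathsf{S}\mathsf{K}\mathsf{K})$ \emph{provided the latter denotes} -- and it does not when $a_0a_1$ is undefined (clause (ii) for $\mathsf{S}$ only forces $\mathsf{S}abc$ to be defined when $ac(bc)$ is, and a subterm $\mathsf{K}w$ simply fails to denote when $w$ does). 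So clause (i) of the realizing definition for $\lambda x_0x_1x_2.t$ cannot be established, and in a PAS where applications are undefined unless forced by the $\mathsf{K}$- and $\mathsf{S}$-properties it genuinely fails. This is not a pedantic point: the two-clause form of \cref{realizing_set}, and the paper's remark contrasting weak and strong PCAs, exist exactly because of this issue.

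The fix is standard but essential: reserve the $\mathsf{K}$-clause for atoms (variables other than $x$, and parameter subobjects) and apply the $\mathsf{S}$-clause to \emph{every} application, whether or not $x$ occurs in it. Then every bracket abstract is built solely from $\mathsf{S}$ applied to at most two arguments and $\mathsf{K}$ applied to one, hence denotes under any substitution of elements for its free variables; this also gives $[x]t\in\phi$ by closure of $\phi$ under defined application. That total definedness of the intermediate abstracts is exactly what makes $([x_0]\cdots[x_n]t)\,a_0\cdots a_{n-1}$ defined for all $a_0,\dots,a_{n-1}$, i.e., what delivers clause (i) for multi-variable terms, while clause (ii) is handled by the conditional equation as you describe. (For the lemma itself only parameter-free terms are needed, so the parameter bookkeeping can be dropped; if you do keep parameters, note that \cref{convention_parameters} abstracts all variables first and only then applies to the $U_i$, which sidesteps the problem of non-denoting compounds of parameters.) With these corrections your induction runs through, entirely within regular logic.
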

We will also need a few other common combinators that may be found in $\phi$: an identity combinator $\mathsf{I}$ realizing $\lambda x.x$, a combinator $\overline{\mathsf{K}}$ realizing $\lambda xy.y$, and pairing and unpairing combinators $\mathsf{P},\mathsf{P}_0$ and $\mathsf{P}_1$ realizing $\lambda xyz. zxy$, $\lambda x.x\mathsf{K}$ and $\lambda x. x\overline{\mathsf{K}}$ respectively. For any choice of these pairing and unpairing combinators, we have:
\begin{align*}
\mathsf{P}(p)&\models_{p,a,b:A} pab\denotes,\\
\mathsf{P}(p)\wedge\mathsf{P}_0(p_0)&\models_{p,p_0,a,b:A} p_0(pab) = a,\mbox{ and}\\
\mathsf{P}(p)\wedge\mathsf{P}_1(p_1)&\models_{p,p_1,a,b:A} p_1(pab) = b
\end{align*}
We will assume that, whenever we work with a PCA, we have made some choice of the combinators above in the filter $\phi$.

In \cref{generated_singletons}, we used the idea of generating a filter by a certain subset of $\mathcal{P}^\ast A$. We can generalize this as follows.

\begin{prop}
\textup{(}Cf.\@ \cite{wps}, Example 1.3.20.\textup{)} If $A$ is a PAS and $G\subseteq\mathcal{P}^\ast A$, then there exists a least filter $\langle G\rangle$ extending $G$, given by
\begin{align}\label{generated_filter}
\langle G\rangle = \{V\in\mathcal{P}^\ast A\mid \exists\textup{ term }t(\vec{x})\s \exists \vec{U}\in G\s (t(\vec{U})\denotes\wedge\ t(\vec{U})\subseteq V)\}.
\end{align}
\end{prop}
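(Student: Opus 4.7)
The proof splits into verifying that the set $\langle G\rangle$ defined by (\ref{generated_filter}) is indeed a filter containing $G$, and that any filter $\phi\supseteq G$ contains it. Containment $G\subseteq\langle G\rangle$ is the trivial case of the term $t(x)=x$, and upward closure is immediate from the defining formula, since if $t(\vec{U})\subseteq V\subseteq V'$ then the same term and tuple witness $V'\in\langle G\rangle$.

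The main content is closure under application. Suppose $V_1,V_2\in\langle G\rangle$ with $V_1V_2\!\denotes$, witnessed by terms $t_1(\vec x_1),t_2(\vec x_2)$ and tuples $\vec U_1,\vec U_2$ from $G$ with $t_i(\vec U_i)\!\denotes$ and $t_i(\vec U_i)\subseteq V_i$. After renaming so that $\vec x_1,\vec x_2$ are disjoint, form the composite term $s(\vec x_1,\vec x_2):=t_1(\vec x_1)\cdot t_2(\vec x_2)$. Since $V_1V_2\!\denotes$ means $V_1\times V_2\subseteq D$, and $t_i(\vec U_i)\subseteq V_i$, we have $t_1(\vec U_1)\times t_2(\vec U_2)\subseteq D$, hence $s(\vec U_1,\vec U_2)\!\denotes$; moreover $s(\vec U_1,\vec U_2)$, being the image of $t_1(\vec U_1)\times t_2(\vec U_2)$ under application, is a subobject of the image of $V_1\times V_2$, which is $V_1V_2$. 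Thus the combined tuple $\vec U_1,\vec U_2\in G$ together with the term $s$ witnesses $V_1V_2\in\langle G\rangle$. This makes $\langle G\rangle$ a filter.

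For minimality, let $\phi$ be any filter with $G\subseteq\phi$, and suppose $V\in\langle G\rangle$ via $t(\vec x)$ and $\vec U\in G$ with $t(\vec U)\!\denotes$ and $t(\vec U)\subseteq V$. I would prove $t(\vec U)\in\phi$ by structural induction on $t$: if $t$ is a variable, then $t(\vec U)$ is one of the $U_i$, which lies in $G\subseteq\phi$; if $t=t_1\cdot t_2$, then $t(\vec U)\!\denotes$ forces $t_1(\vec U)\!\denotes$, $t_2(\vec U)\!\denotes$ and the product $t_1(\vec U)\times t_2(\vec U)\subseteq D$, and the induction hypothesis together with closure of $\phi$ under application give $t_1(\vec U)\cdot t_2(\vec U)=t(\vec U)\in\phi$. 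Upward closure of $\phi$ then yields $V\in\phi$, so $\langle G\rangle\subseteq\phi$.

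The only mildly subtle point is to keep the bookkeeping of variables and tuples clean: specifically, that when composing two witnessing terms one may need to relabel variables to disjoint ones, and that the induction step genuinely uses the fact that $t(\vec U)\!\denotes$ implies the sub-terms are defined and their evaluations form a product lying in $D$. Both amount to a routine inspection of the recursive definitions of terms and of $UV\!\denotes$; no further difficulty arises.
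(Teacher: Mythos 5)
Your proposal is correct and follows essentially the same route as the paper: both verify that the right-hand side of \eqref{generated_filter} is a filter containing $G$ (with closure under application witnessed by the composite term $s = t_1\cdot t_2$), and both establish minimality by showing any filter containing $G$ must contain every defined $t(\vec{U})$ and then invoking upward closure. Your version merely makes explicit two points the paper leaves implicit, namely the variable renaming when forming the composite term and the structural induction on $t$ in the minimality step.
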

\begin{proof}
The existence of $\langle G\rangle$ is obvious from the definition of a filter and the fact that $\mathcal{P}^\ast A$ itself is always a filter. Therefore, it remains to prove \eqref{generated_filter}. First of all, we show that the right-hand side of \eqref{generated_filter} is indeed a filter containing $G$. Upwards closure is obvious, so suppose we have $V,V'\in\mathcal{P}^\ast A$, terms $t(\vec{x}), t'(\vec{x}')$, and $\vec{U},\vec{U}'\in G$ such that $t(\vec{U})\denotes$, $t'(\vec{U}')\denotes$, $t(\vec{U})\subseteq V$, $t'(\vec{U}')\subseteq V'$ and $VV'\denotes$. Define the term $s(\vec{x},\vec{x}')$ as $t(\vec{x})\cdot t'(\vec{x}')$. Since $VV'\denotes$, we see that $s(\vec{U},\vec{U}') = t(\vec{U})\cdot t'(\vec{U}')$ denotes as well, and is a subobject of $VV'$, as desired. Finally, it is clear that the right-hand side of \eqref{generated_filter} contains $G$.

Now suppose that $\phi$ is any filter extending $G$. Since $\phi$ is closed under application, it must contain $t(\vec{U})$ whenever $\vec{U}\in G$ and $t(\vec{U})\denotes$. Since $\phi$ is also upwards closed, it must contain the entire right-hand side of \eqref{generated_filter}, which completes the proof.
\end{proof}

Clearly, if $G\subseteq\mathcal{P}^\ast A$ is combinatorially complete, then $(A,\langle G\rangle)$ is a PCA.

\begin{ex}
If $C$ is a set of global sections of $A$ that is closed under application (see \cref{generated_singletons}), then $\langle C\rangle = \phi_C$. \ruitje
\end{ex}

As in the case of $\mathsf{Set}$, we have a category of assemblies, which we define now.
\begin{defi}
Let $(A,\phi)$ be a PCA.
\begin{itemize}
\item[(i)]	An \emph{assembly} $X$ over $(A,\phi)$ is a pair $(|X|,E_X)$, where $|X|$ is an object of $\mathcal{C}$ and $E_X$ is a total relation between $|X|$ and $A$. More explicitly, $E_X$ is a subobject of $|X|\times A$ such that $\models_{x:|X|} \exists a: A\s (E_X(x,a))$, or equivalently, the projection $E_X\to |X|$ is regular epi.
\item[(ii)]	Let $X$ and $Y$ be assemblies. A \emph{morphism of assemblies} $X\to Y$ is an arrow $f\colon |X|\to|Y|$ for which there exists a $U\in\phi$ such that
\[
E_X(x,a)\wedge U(r) \models_{x:|X|; r,a:A} ra\denotes\wedge\ E_Y(f(x),ra)
\] holds. We say that such a $U$ \emph{tracks} $f\colon X\to Y$ \ruitje
\end{itemize}
\end{defi}

\begin{prop}\label{GammaNabla}
Assemblies over a PCA $(A,\phi)$ and morphisms between them form a category $\asm(A,\phi)$, and there exists a pair of functors
\[\begin{tikzcd}
\mathcal{C} \arrow[r, shift right, "\nabla" below] & \asm(A,\phi) \arrow[l, shift right, "\Gamma" above]
\end{tikzcd}\]
with $\Gamma\dashv\nabla$.
\end{prop}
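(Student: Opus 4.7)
The approach is standard: first verify the category axioms for $\asm(A,\phi)$, then take $\Gamma$ to be the forgetful functor $X\mapsto|X|$ and $\nabla$ to be the ``codiscrete'' assembly functor whose assemblies have every element of $A$ as a realizer for every point.

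For the category structure, the identity on $X$ is tracked by $\mathsf{I}$ (realizing $\lambda x.x$): the tracker condition reduces to $ra\denotes\wedge E_X(x,ra)$, and indeed $\mathsf{I}(r)\models ra\denotes\wedge ra=a$. For composition, if $U\in\phi$ tracks $f\colon X\to Y$ and $V\in\phi$ tracks $g\colon Y\to Z$, I would invoke the generalized combinatorial completeness established just after \cref{convention_parameters} to produce a $W\in\phi$ realizing $\lambda a.\s V(Ua)$; concretely, one may take $W = W_0 U V$ for some $W_0\in\phi$ realizing $\lambda u v a.\s v(ua)$. Reasoning inside $\mathcal{C}$: if $E_X(x,a)\wedge W(r)$, then there exist $r_0\in W_0$, $u\in U$, $v\in V$ with $r = r_0 u v$; by tracking, $ua\denotes$ and $E_Y(f(x),ua)$, and hence $v(ua)\denotes$ and $E_Z(g(f(x)),v(ua))$; the realizer property of $W_0$ then gives $ra = r_0 u v a = v(ua)$. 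So $W$ tracks $g\circ f$. Associativity and the unit laws are inherited directly from $\mathcal{C}$.

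For the adjunction, set $\Gamma(X) = |X|$, $\Gamma(f) = f$ (clearly functorial), and $\nabla(Y) = (Y, Y\times A)$ on objects, where $Y\times A$ is a total relation because the projection $Y\times A\to Y$ is the pullback of the regular epi $A\to 1$ witnessing inhabitedness of $A$; on morphisms, $\nabla(f) = f$, tracked by $\mathsf{I}$ since the codomain realization is trivially total. I would then exhibit the adjunction via unit and counit: $\eta_X\colon X\to\nabla|X|$ is $\id_{|X|}$ on underlying objects (a valid assembly morphism tracked by $\mathsf{I}$, again since the target realization is total), and $\varepsilon_Y\colon\Gamma\nabla Y=Y\to Y$ is $\id_Y$. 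Naturality and both triangle identities hold by inspection, because every component reduces to an identity arrow of $\mathcal{C}$.

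The only step with non-trivial content is the construction of a tracker for a composition, which is precisely where combinatorial completeness is used; once the category $\asm(A,\phi)$ is in hand, the adjunction is essentially formal, since morphisms into $\nabla Y$ impose no tracking condition beyond the mere definedness of application.
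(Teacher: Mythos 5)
Your proposal is correct and follows essentially the same route as the paper: identity tracked by $\mathsf{I}$, a tracker for the composite obtained from a realizer of $\lambda x.\s V(Ux)$ via the generalized combinatorial completeness, and the observation that morphisms into $\nabla Y$ carry no tracking obligation. The only cosmetic difference is that you package the adjunction as a unit--counit pair where the paper exhibits the hom-set bijection directly; the underlying content is identical.
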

\begin{proof}
If $X$ is an assembly, then $\mathsf{I}$ tracks $\id_{|X|}$ as a morphism $X\to X$. Now let $X\stackrel{f}{\longrightarrow} Y\stackrel{g}{\longrightarrow}Z$ be morphisms, tracked by $U$ and $V$ from $\phi$ respectively, and pick a $W\in\phi$ realizing $\lambda x.V(Ux)$. We claim that $W$ tracks $gf\colon X\to Z$. To this end, select a $W'$ realizing $\lambda yzx.z(yx)$ such that $W \subseteq W'UV$. We reason internally in $\mathcal{C}$: suppose we have $x\in|X|$ and $r,a\in A$ such that $E_X(r,a)$ and $W(r)$. Then there exist $r',s,t\in A$ such that $W'(r')$, $U(s)$, $V(t)$ and $r=r'st$. From $E_X(x,a)$ and $U(s)$, we can conclude that $sa\denotes$ and $E_Y(f(x),sa)$. From the latter and $V(t)$, we can conclude that $t(sa)\denotes$ and $E_Z(g(f(x)),t(sa))$. Since $W'(r')$, we have that $ra = r'sta$ is defined and equal to $t(sa)$, so $E_Z(g(f(x)),ra)$, as desired.

If $Y$ is an object of $\mathcal{C}$, then we define $\nabla Y$ by $|\nabla Y| = Y$ and $E_{\nabla Y} = Y\times A$. This is always an assembly because $A$ is inhabited. Moreover, an arrow $f\colon Y\to Y'$ of $\mathcal{C}$ is always a morphism $\nabla Y\to\nabla Y'$, since it is tracked by $\mathsf{I}$, so this extends to a functor $\nabla\colon\mathcal{C}\to\asm(A,\phi)$.

Similarly, if $X$ is an assembly and $Y$ is an object of $\mathcal{C}$, then any arrow $f\colon |X|\to Y$ is automatically a morphism $X\to\nabla Y$, since it is always tracked by $\mathsf{I}$. This shows that $\Gamma\dashv \nabla$.
\end{proof}

\begin{rem}
The proof above that $gf$ is a morphism proceeded as follows: first, we constructed the desired tracker $W\in\phi$ by mimicking the usual construction of this tracker in the case of classical PCAs. Then we unpacked all the definitions, and finally, we gave an internal argument that this is indeed a tracker, and this argument parallels the usual argument for classical PCAs. In the sequel, we will usually only show how to construct a desired element of $\phi$, leaving the unpacking of the definitions and the internal reasoning needed to verify that this element meets the appropriate conditions to the reader. \ruitje
\end{rem}

\begin{ex}
If $(A,C)$ is a classical relative PCA, then $\asm(A,C)$ as defined above coincides with the familiar category of assemblies for $(A,C)$. \ruitje
\end{ex}

\cref{GammaNabla} implies in particular that $\nabla 1$ is the terminal object of $\asm(A,\phi)$. The notation $\Gamma$ comes from the fact that, as we mentioned in the Introduction, in the case of a classical absolute PCA over $\mathsf{Set}$, this $\Gamma$ is the global sections functor. For classical relative PCAs, this is no longer true. In fact, we can use global sections to determine whether a given PCA is absolute.
\begin{prop}\label{absoluteness}
A PCA $(A,\phi)$ is absolute if and only if $\Gamma$ commutes \textup{(}up to isomorphism\textup{)} with the global sections functors $\asm(A,\phi)\to\mathsf{Set}$ and $\mathcal{C}\to\mathsf{Set}$.
\end{prop}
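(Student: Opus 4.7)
My plan is to use the adjunction $\Gamma\dashv\nabla$ to identify the relevant comparison map. Since the terminal object of $\asm(A,\phi)$ is $\nabla 1$, a global section of an assembly $X$ is a morphism $\nabla 1\to X$, while a global section of $\Gamma X = |X|$ is an arrow $1\to|X|$ in $\mathcal{C}$. The functor $\Gamma$ sends the former to the latter, giving a comparison map $\mathrm{Hom}_{\asm(A,\phi)}(\nabla 1,X)\to\mathrm{Hom}_\mathcal{C}(1,|X|)$, natural in $X$ and automatically injective because morphisms of assemblies are determined by their underlying arrows in $\mathcal{C}$. Commutation of $\Gamma$ with global sections is therefore equivalent to surjectivity of this map for every $X$, i.e., to the statement that every $x\colon 1\to|X|$ in $\mathcal{C}$ admits a $U\in\phi$ with $U(r)\models_{r,a:A} ra\denotes\wedge\ E_X(x,ra)$.

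For the forward implication I would proceed as follows. Given $x\colon 1\to|X|$, pull $E_X\mono|X|\times A$ back along $(x,\id_A)\colon A\to|X|\times A$ to obtain a subobject $V_x\subseteq A$, intuitively the set of $a$ with $E_X(x,a)$. Totality of $E_X$ together with stability of regular epis under pullback (using that $\mathcal{C}$ is regular) shows $V_x\to 1$ is regular epi, so $V_x\in\mathcal{P}^\ast A$. By absoluteness, $\phi=\mathcal{P}^\ast A$, hence $V_x\in\phi$, and consequently $\mathsf{K}V_x\in\phi$. For any $r\in\mathsf{K}V_x$ and any $a\in A$, one has $ra\denotes$ and $ra\in V_x$, and therefore $E_X(x,ra)$. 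Thus $\mathsf{K}V_x$ tracks $x$ as a morphism $\nabla 1\to X$.

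For the converse I would take an arbitrary $V\in\mathcal{P}^\ast A$ and build the assembly $X_V$ with $|X_V|=1$ and $E_{X_V}=V\subseteq 1\times A$; the inhabitedness of $V$ is exactly the required totality. The identity $1\to 1=|X_V|$ is a global section of $\Gamma X_V$, and by hypothesis it lifts to a morphism $\nabla 1\to X_V$, producing a $U\in\phi$ with $U(r)\models_{r,a:A} ra\denotes\wedge\ V(ra)$. Choosing any $W\in\phi$ (for instance $W=\mathsf{K}$), the set $UW$ belongs to $\phi$ and is contained in $V$, so $V\in\phi$ by upward closure. Hence $\mathcal{P}^\ast A\subseteq\phi$, which gives $\phi=\mathcal{P}^\ast A$ and so absoluteness of $(A,\phi)$.

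The only genuinely non-formal step is the construction of the tracker $\mathsf{K}V_x$ in the forward direction, where absoluteness is precisely what permits the ``constant'' combinator $\mathsf{K}V_x$ to realize a map outputting an element of the a priori non-realizing subobject $V_x$; the rest is routine unwinding of definitions, with regularity of $\mathcal{C}$ entering only through pullback-stability of regular epimorphisms.
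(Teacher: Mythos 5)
Your proof is correct and follows essentially the same route as the paper: reduce the statement (via faithfulness of $\Gamma$) to whether every global section of $|X|$ lifts to $X$, use $\mathsf{K}V_x$ with $V_x=\{a\in A\mid E_X(x,a)\}$ for the forward direction, and for the converse test against the assembly with underlying object $1$ and existence predicate an arbitrary inhabited $V\subseteq A$. The only cosmetic differences are that the paper phrases the converse contrapositively and applies the tracker to $A\in\phi$ rather than to $\mathsf{K}$; both choices work for the same reason.
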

\begin{proof}
Since $\Gamma$ is faithful, it commutes with the global sections functors if and only if, for every assembly $X$, any global section of $|X|$ is also a global section of $X$. First, suppose that $(A,\phi)$ is absolute and that $x\colon 1\to |X|$ is a global section. Then $U:=\{a\in A\mid E_X(x,a)\}$ is inhabited, so it belongs to $\phi$. Since $\mathsf{K}U$ tracks $x\colon 1\to X$, it follows that $x$ is also a global section of $X$.

Conversely, suppose that $(A,\phi)$ is \emph{not} absolute. Then there exists an inhabited subobject $U$ of $A$ that does not belong to $\phi$. Define the assembly $X$ by $|X|=1$ and $E_X = U \subseteq A \simeq 1\times A$; this is indeed an assembly since $U$ is inhabited. Moreover, $|X|=1$ has the global section $!\colon 1\to 1$. If $!$ were also a morphism $1\to X$ tracked by $V$, then we would have $V\!A\denotes$ and $V\!A \subseteq U$. Since $\phi$ is nonempty (it must contain $\mathsf{K}$) and upwards closed, it always contains $A$ itself. This means that $V\!A\in\phi$, but $U\not\in\phi$, which contradicts the upwards closure of $\phi$.
\end{proof}

\section{Applicative Morphisms and Transformations}\label{section_appmor}

In \cite{wps}, Stekelenburg generalizes Longley's definition of applicative morphisms between classical PCAs (\cite{longleyphd}, Definition 2.1.1) to applicative morphisms between PCAs constructed over a certain (Heyting) category $\mathcal{C}$. The goal of this section will be to generalize this to PCAs constructed over possibly different regular categories. As a preliminary to this, we again fix a regular category $\mathcal{C}$, and we define the category of PCAs over $\mathcal{C}$.

As usual, a relation between two objects $A$ and $B$ of $\mathcal{C}$ is a subobject $f\subseteq A\times B$. These may be composed: if $f\subseteq A\times B$ and $g\subseteq B\times C$, then we define their composition $gf\subseteq A\times C$ as
\[\{(a,c)\in A\times C\mid \exists b: B\s (f(a,b)\wedge g(b,c))\}.\]
This composition is associative and for each object $A$, we have the diagonal relation $\delta_A\subseteq A\times A$, which acts as an identity.

\begin{defi}\label{applicative_morphism}
(Cf. \cite{wps}, Definition 2.3.20.) Let $(B,\psi)$ be a PCA and let $A$ be an object of $\mathcal{C}$.
\begin{itemize}
\item[(i)]	If $f,f'\subseteq A\times B$, then we say that $f\leq f'$ if there exists a $U\in\phi$ such that
\[f(a,b)\wedge U(r)\models_{a:A; r,b:B} rb\denotes\wedge\ f'(a,rb).\]
We say that such a $U$ realizes the inequality $f\leq f'$.
\item[(ii)]	Suppose that $A$ is equipped with a partial applicative structure. A relation $f\subseteq A\times B$ is an \emph{applicative premorphism} $A\to (B,\psi)$ if:
\begin{itemize}
\item[(a)]	$f$ is total, i.e., $\models_{a:A} \exists b: B\s (f(a,b))$;
\item[(b)]	there exists a $U\in\psi$ such that
\[f(a,b)\wedge f(a',b)\wedge aa'\denotes \wedge\ U(r) \models_{a,a':A; r,b,b':B} rbb'\denotes\wedge f(aa',rbb').\]
\end{itemize}
A set $U$ as in item (b) is said to \emph{track} the applicative premorphism $f\colon A\to (B,\psi)$.
\item[(iii)]	Suppose that $(A,\phi)$ is a PCA. We say that a relation $f\subseteq A\times B$ is an applicative premorphism $(A,\phi)\to(B,\psi)$ if it is one $A\to (B,\psi)$. The relation $f$ is called an \emph{applicative morphism} if it is an applicative premorphism, and moreover:
\begin{itemize}
\item[(c)]	if $U\in\phi$, then $f(U):=\{b\in B\mid\exists a:A\s (U(a)\wedge f(a,b))\}\in\psi$. \ruitje
\end{itemize}
\end{itemize}
\end{defi}

Applicative premorphisms between PCAs are not particularly well-behaved; for example, they do not form a category. If we restrict to applicative morphisms, on the other hand, we do get a well-behaved category (see also \cite{wps}, p.\@ 69). We prove this as part of the next proposition, which explains the compatibility between the notions from \cref{applicative_morphism} and composition.

\begin{prop}\label{compose}
Let $(B,\psi)$ be a PCA, let $A$ be an object of $\mathcal{C}$.
\begin{itemize}
\item[\textup{(}i\textup{)}]	$\leq$ is a preorder on the set of relations between $A$ and $B$.
\end{itemize}
Now let $f,f'\subseteq A\times B$ such that $f\leq f'$.
\begin{itemize}
\item[\textup{(}ii\textup{)}]	If $g\colon (B,\psi)\to (C,\chi)$ is an applicative morphism, then $gf\leq gf'$.
\item[\textup{(}iii\textup{)}]	If $g\subseteq C\times A$ is any relation, then $fg\leq f'g$.
\end{itemize}
Now suppose that $A$ is a PAS and that $f$ is an applicative premorphism $A\to(B,\psi)$.
\begin{itemize}
\item[\textup{(}iv\textup{)}]	If $g\colon (B,\psi)\to (C,\chi)$ is an applicative morphism, then $gf$ is an applicative premorphism $A\to (C,\chi)$.
\end{itemize}
Finally, suppose that $(A,\phi)$ is a PCA and that $f\colon (A,\phi)\to(B,\psi)$ is an applicative morphism.
\begin{itemize}
\item[\textup{(}v\textup{)}]	If $g\colon (B,\psi)\to (C,\chi)$ is an applicative morphism, then $gf$ is an applicative morphism $(A,\phi)\to (C,\chi)$.
\end{itemize}
In particular, PCAs over $\mathcal{C}$ and applicative morphisms form a preorder-enriched category, which we denote by $\mathsf{PCA}_\mathcal{C}$.
\end{prop}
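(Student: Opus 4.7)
I plan to work through the six items in order, observing that the heavy lifting is concentrated in (iv); the remaining parts are routine bookkeeping with combinatorial completeness and the defining properties of applicative morphisms.

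For (i), reflexivity is witnessed by any realizer in $\psi$ of $\lambda b. b$ (for instance $\mathsf{I}$), and transitivity by combining realizers via combinatorial completeness: if $U, V \in \psi$ witness $f \leq f'$ and $f' \leq f''$, then any $W \in \psi$ realizing $\lambda b. V(Ub)$ witnesses $f \leq f''$. Item (iii) is immediate: the same $U$ that realizes $f \leq f'$ also realizes $fg \leq f'g$, because the precomposition with $g$ is invisible to the realizer --- unfolding $(fg)(c,b)$ produces some $a$ with $g(c,a), f(a,b)$, and then $U$ acts on $b$ exactly as before. Item (v) reduces to the identity $(gf)(U) = g(f(U))$, which is direct from the definition of relational composition, followed by two applications of condition (c): $f(U) \in \psi$ and hence $g(f(U)) \in \chi$.

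For (ii), the point worth noting is that we need the realizer in $\chi$, not $\psi$; here the hypothesis that $g$ is a morphism (rather than a premorphism) kicks in, letting us transfer $U \in \psi$ to $g(U) \in \chi$. Taking any tracker $V \in \chi$ of $g$, the set $W := V \cdot g(U)$ lies in $\chi$ and realizes $gf \leq gf'$: unfolding $(gf)(a,c)$ through some $b$ with $f(a,b), g(b,c)$, and writing $r = v \cdot e \in W$ with $v \in V$ and $e \in g(U)$ arising from some $u \in U$ with $g(u,e)$, applying $U$ yields $ub \denotes$ and $f'(a, ub)$, after which the tracker $V$ applied to $g(u,e), g(b,c)$ (using $ub \denotes$) yields $vec \denotes$ and $g(ub, vec) = g(ub, rc)$, so $(gf')(a, rc)$ as required.

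Item (iv) is the main obstacle, and it requires iterating the tracker of $g$ twice. Let $V_f \in \psi$ track $f$ and $V_g \in \chi$ track $g$; since $V_f \in \psi$ and $g$ is a morphism, $g(V_f) \in \chi$. By combinatorial completeness I take $W \in \chi$ realizing $\lambda c c'. V_g(V_g \cdot g(V_f) \cdot c) c'$. To verify it tracks $gf$, given $(gf)(a,c)$ via $b$, $(gf)(a',c')$ via $b'$, and $aa' \denotes$, pick $v \in V_f$ and $d \in g(V_f)$ with $g(v,d)$; $V_f$ tracking $f$ yields $vbb' \denotes$ and $f(aa', vbb')$, and applying $V_g$ first to $g(v,d), g(b,c)$ with $vb \denotes$, then to the resulting $g(vb, r_1 dc)$ together with $g(b', c')$ (using $(vb)b' \denotes$), produces $r_2(r_1 dc) c' \denotes$ with $g(vbb', r_2(r_1 dc) c')$. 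Combined with $f(aa', vbb')$ this gives $(gf)(aa', r_2(r_1 dc) c')$, the required expression of the form $rcc'$ with $r \in W$. For the concluding statement, the identity on $(A,\phi)$ is the diagonal $\delta_A$, tracked by any realizer of $\lambda xy. xy$ in $\phi$ with $\delta_A(U) = U$ verifying condition (c); associativity of composition of relations is standard; and (ii) and (iii) together with (i) yield the preorder-enrichment of $\mathsf{PCA}_\mathcal{C}$.
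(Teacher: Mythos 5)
Your items (i), (iii), (iv), (v) and the concluding bookkeeping follow essentially the same route as the paper: the realizer of $\lambda x.V(Ux)$ for transitivity, the observation that a realizer of $f\leq f'$ already realizes $fg\leq f'g$, the identity $(gf)(U)=g(f(U))$ for requirement (c), the same doubly-iterated tracker $\lambda cc'.V_g(V_g\cdot g(V_f)\cdot c)c'$ in (iv) with the same internal verification, and the same treatment of $\delta_A$ for the final claim. (You skip the one-line check of totality of $gf$ in (iv), which is harmless since $f$ and $g$ are total.)

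The one genuine flaw is in (ii): you set $W:=V\cdot g(U)$ and assert that it lies in $\chi$, but this application in $\mathcal{P}^\ast C$ need not be defined. Definedness of $V\cdot g(U)$ means that $ve\denotes$ holds internally for all $v\in V$ and $e\in g(U)$, and the tracking condition for $g$ only yields this under the additional hypothesis that the corresponding $u\in U$ admits some defined application $ub$ with $b$ in the domain of $g$; that is available when $f$ is total, but (ii) is stated for arbitrary relations $f\leq f'$. For example, with $f=f'=\emptyset$ the inequality is realized by any $U\in\psi$, including one all of whose elements have nowhere-defined application (in $\mathcal{K}_1$, a singleton on an index of the empty function), and one can choose a tracker $V$ of $g=\delta_B$ for which $V\cdot g(U)$ is undefined, so your witness simply does not exist (the conclusion still holds, but not by your argument). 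The repair is exactly what the paper does, and what you yourself do in (i) and (iv): instead of the bare product, take any $W\in\chi$ realizing $\lambda x.\,V\cdot g(U)\cdot x$, which exists by combinatorial completeness with parameters (\cref{convention_parameters}), since for such a realizer only the application to the \emph{last} argument is conditional on definedness. Its elements applied to $c$ then compute the left-associated value $(ve)c$, which is precisely the value your verification (correctly) identifies, so the rest of your argument for (ii) goes through verbatim. With that adjustment the proposal coincides with the paper's proof.
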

\begin{proof}
If $f\subseteq A\times B$, then $\mathsf{I}$ always realizes $f\leq f$. Now suppose we have $f,f',f''\subseteq A\times B$ such that $f\leq f'\leq f''$, and say that $U,V\in \psi$ realize $f\leq f'$ and $f'\leq f''$ respectively. Then any realizer of $\lambda x. V(Ux)$ also realizes $f\leq f''$, as desired.

For statement (ii), let $U\in\psi$ realize $f\leq f'$ and let $V\in\chi$ track $g$. Then $g(U)\in\chi$, and any realizer of $\lambda x.V(g(U)\cdot x)$ also realizes $gf\leq gf'$.

For statement (iii), we simply observe that every realizer of $f\leq f'$ also realizes $fg\leq f'g$.

For statement (iv), it is easy to check that $gf$ satisfies requirement (a). For requirement (b), let $U\in\psi$ track $f$ and let $V\in \chi$ track $g$. Then any realizer of $\lambda xy.V(V \cdot g(U)\cdot x)y$ tracks $gf$.

For statement (v), we observe that $(gf)(U) = g(f(U))$ for every $U\subseteq A$, so requirement (c) follows.

For the final claim, it suffices to show that, if $(A,\phi)$ is a PCA, then the identity relation $\delta_A$ is always an applicative morphism. Requirement (a) is clear. For requirement (b), we observe that $\mathsf{I}$ always tracks $\delta_A$, and for requirement (c), we use that $\delta_A(U)=U$ for every $U\subseteq A$. This completes the proof.
\end{proof}

The following proposition simplifies the definition of an applicative morphism if we work with generated filters.
\begin{prop}\label{lift_left}
Suppose that $A$ is a PAS, that $G\subseteq\mathcal{P}^\ast A$ is combinatorially complete, that $(B,\psi)$ is a PCA and that $f\colon A\to (B,\psi)$ is an applicative premorphism. Then $f$ is an applicative morphism $(A,\langle G\rangle)\to(B,\psi)$ if and only if it satisfies:
\begin{itemize}
\item[\textup{(}c'\textup{)}]	if $U\in G$, then $f(U)\in\psi$.
\end{itemize}
\end{prop}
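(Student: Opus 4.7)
The forward direction is immediate: since $G \subseteq \langle G\rangle$, condition (c) specialised to elements of $G$ yields (c').

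For the converse, I plan to use the explicit description of $\langle G\rangle$ in equation~\eqref{generated_filter}: given $V\in\langle G\rangle$ we have a term $t(\vec{x})$ and a tuple $\vec{U}$ from $G$ with $t(\vec{U})\denotes$ and $t(\vec{U})\subseteq V$. Since $\psi$ is upwards closed it suffices to show $f(t(\vec{U}))\in\psi$, and I would prove this by induction on the structure of $t$. The base case $t=x_i$ is immediate from (c'): $f(t(\vec{U})) = f(U_i)\in\psi$.

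The inductive step is the content of the proof. Here I would first isolate the following key lemma: if $T\in\psi$ tracks $f$ and $W_1,W_2\in\mathcal{P}^\ast A$ satisfy $W_1W_2\denotes$ and $f(W_1),f(W_2)\in\psi$, then $T\cdot f(W_1)\cdot f(W_2)$ is defined in $\mathcal{P}^\ast B$ and contained in $f(W_1W_2)$. Reasoning inside $\mathcal{C}$: for any $r\in T$, $b\in f(W_1)$, $b'\in f(W_2)$, the definition of $f(-)$ provides witnesses $a\in W_1$, $a'\in W_2$ with $f(a,b)$ and $f(a',b')$; since $W_1W_2\denotes$ we have $aa'\denotes$, and the tracker condition then yields $rbb'\denotes$ together with $rbb'\in f(aa')\subseteq f(W_1W_2)$. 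Applying this lemma with $W_1 = s(\vec{U})$ and $W_2 = s'(\vec{U})$ for $t = s\cdot s'$ (both defined because $t(\vec{U})\denotes$), and invoking the inductive hypothesis to get $f(s(\vec{U})),f(s'(\vec{U}))\in\psi$, the closure of $\psi$ under application and its upwards closure deliver $f(t(\vec{U}))\in\psi$.

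The main point of care is the definedness statement inside the key lemma: for a fixed $r\in T$ and $b\in f(W_1)$, the tracker condition only directly gives $rbb'\denotes$, so to see that $T\cdot f(W_1)\denotes$ one must first exhibit some $b'$ by combining inhabitedness of $W_2$ with the totality clause (a) of an applicative premorphism, and only then deduce $rb\denotes$ from $rbb' = (rb)b'\denotes$. Once this subtlety is dispatched, everything else is routine bookkeeping with the filter axioms and the induction.
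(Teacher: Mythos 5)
Your proof is correct, and its mathematical content coincides with the paper's: the forward direction is the same one-liner, and your ``key lemma'' (a tracker $T$ of $f$ gives $T\cdot f(W_1)\cdot f(W_2)\denotes$ with $T\cdot f(W_1)\cdot f(W_2)\subseteq f(W_1W_2)$) is exactly the computation at the heart of the paper's argument. The only difference is packaging. The paper defines $f^{-1}(\psi)=\{U\subseteq A\mid f(U)\in\psi\}$, verifies once that this is a filter (inhabitedness, upwards closure, and closure under application via the tracker computation), and then concludes from $G\subseteq f^{-1}(\psi)$ and the minimality of $\langle G\rangle$ that $\langle G\rangle\subseteq f^{-1}(\psi)$. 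You instead unwind the explicit description \eqref{generated_filter} of $\langle G\rangle$ and run a structural induction on the term $t$; your induction is precisely the proof of the minimality clause specialised to $f^{-1}(\psi)$, so the two routes are interderivable. The paper's version is slightly more economical and reusable (the filter $f^{-1}(\psi)$ is a natural object), while yours makes the term-by-term mechanism visible and, to its credit, is explicit about the definedness subtlety $T\cdot f(W_1)\denotes$ — obtaining some $b'$ from the inhabitedness of $W_2$ and totality of $f$ before peeling off $rb\denotes$ from $rbb'\denotes$ — which the paper glosses over.
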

\begin{proof}
Since $G\subseteq \langle G\rangle$, any applicative morphism must satisfy (c').

For the converse, suppose that (c') holds. Define
\[f^{-1}(\psi) := \{U\subseteq A\mid f(U)\in\psi\}.\]
We claim that $f^{-1}(\psi)$ is a filter on $A$. First of all, since every element of $\psi$ is inhabited, every element of $f^{-1}(\psi)$ must be inhabited as well. If $U\in f^{-1}(\psi)$ and $U\subseteq V$, then we see that $f(U)\subseteq f(V)$. Since $f(U)\in \psi$, it follows that $f(V)\in \psi$ as well, so $V\in f^{-1}(\psi)$. Finally, suppose that $U,U'\in f^{-1}(\psi)$ and $UU'\denotes$. If $W\in\psi$ tracks $f$, then $W\cdot f(U)\cdot f(U')$ is defined and a subobject of $f(UU')$. But we know that $W, f(U), f(U')\in\psi$, so since $\psi$ is a filter, we can conclude that $f(UU')\in\psi$ as well, as desired.

Requirement (c') means that $G\subseteq f^{-1}(\psi)$. By what we have just shown, it follows that $\langle G\rangle \subseteq f^{-1}(\psi)$, which means that $f\colon(A,\langle G\rangle)\to(B,\psi)$ satisfies requirement (c).
\end{proof}

We investigate some of the structure of $\mathsf{PCA}_\mathcal{C}$; these generalize known properties of the category $\mathsf{PCA}$.

\begin{prop}
For a regular category $\mathcal{C}$, the category $\mathsf{PCA}_\mathcal{C}$ has a pseudozero object.
\end{prop}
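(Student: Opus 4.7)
The plan is to exhibit as pseudozero object $Z = (1, \{1\})$, where $1$ is the terminal object of $\mathcal{C}$ equipped with the (unique, total) partial applicative structure $1\times 1 \to 1$, and the filter is the maximal filter $\mathcal{P}^\ast(1)=\{1\}$. I first check that this really is a PCA (in fact an absolute one): every term $\lambda\vec{x}.t$ is realized by $1$ itself, because every application in $1$ denotes and returns the unique inhabitant, so the two conditions in \cref{realizing_set}(ii) are vacuous.

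For the pseudo-terminal property, fix an arbitrary PCA $(A,\phi)$. A relation $f\subseteq A\times 1$ is the same data as a subobject of $A$, and totality forces the mono $f\hookrightarrow A$ to be regular epi, hence iso; so $f=A\times 1$ is the \emph{only} candidate. It is an applicative morphism: condition (b) is tracked by the unique element $1\in\{1\}$, since the conclusion $f(aa',rbb')$ is vacuous and application in $1$ is total, and condition (c) holds because $f(U)=1\in\{1\}$ for every $U\in\phi$. Since there is a unique total relation, the hom-preorder is trivially equivalent to the terminal preorder.

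For the pseudo-initial property, fix a PCA $(B,\psi)$ and propose $f=1\times B$. Totality is clear; for condition (b) the tracker is $\overline{\mathsf{K}}\in\psi$, since $\overline{\mathsf{K}}bb'=b'\in B$ is always defined; and condition (c) holds because $f(1)=B\in\psi$ by upward closure of $\psi$ from, say, $\mathsf{K}$. For uniqueness, any applicative morphism $g\colon Z\to(B,\psi)$ is determined by an inhabited subobject $U\subseteq B$, and by (c) we have $U=g(1)\in\psi$. Then $g\leq f$ is realized by $\mathsf{I}$ (since $\mathsf{I}\cdot U=U$ is defined), and $f\leq g$ is realized by $\mathsf{K}\cdot U\in\psi$, noting that $(\mathsf{K}\cdot U)\cdot b=U$ is always defined and contained in $U$ by the standard combinator identity $\mathsf{k}ub=u$.

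The main obstacle, as usual in this setting, is keeping the internal-logic bookkeeping honest when one replaces elements by realizing sets: one has to verify that the application $\mathsf{K}\cdot U$ really is a subobject in $\mathcal{P}^\ast A$, that $(\mathsf{K}\cdot U)\cdot b$ is defined for every $b\in B$, and that the resulting image is indeed $U$. Once one is comfortable that the combinator axioms in \cref{realizing_set}(ii) yield these definedness and image identities, the whole argument parallels the classical verification that the one-element PCA is a pseudozero of $\mathsf{PCA}$.
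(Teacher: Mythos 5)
Your proof is correct and follows essentially the same route as the paper: the pseudozero is $(1,\{1\})$ with the total application, the terminal direction is verified by showing the full relation is the unique applicative morphism into it, and the initial direction by showing every applicative morphism out of it is an inhabited subobject $U\in\psi$ and all such are isomorphic. The only difference is cosmetic: where the paper says the isomorphism of all such morphisms is "easily seen," you exhibit the realizers ($\mathsf{I}$ and $\mathsf{K}U$) explicitly, which is a fine elaboration of the same argument.
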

\begin{proof}
First of all, consider the terminal object $1\in\mathcal{C}$, equipped with the total application map $1\times 1\to 1$ and the maximal filter $\{1\}$. This is a PCA, since $1$ realizes $\lambda\vec{x}.t$ for every term $t(\vec{x})$. Suppose that $(A,\phi)$ is an object of $\mathsf{PCA}_\mathcal{C}$. If $f\subseteq A\simeq A\times 1$, then $f$ always satisfies requirement (b) for an applicative morphism $(A,\phi)\to 1$, and it satisfies requirement (a) if and only if $f=A$. In this case, (c) is also satisfied, since every element of $\phi$ is inhabited. This shows that 1 is in fact a 1-terminal object, which automatically makes it a pseudoterminal object.

On the other hand, $f\subseteq A\simeq 1\times A$ always satisfies requirement (b) for an applicative morphism $1\to (A,\phi)$, and it satisfies requirement (c) if and only if $f\in\phi$. In this case, requirement (a) is also satisfied, again since every element of $\phi$ is inhabited. Moreover, it is easily seen that all these possible $f$ are isomorphic, so 1 is a pseudoinitial object in $\mathsf{PCA}_\mathcal{C}$.
\end{proof}

This means that $\mathsf{PCA}_\mathcal{C}$ also has zero morphisms. In fact, the zero morphism $(A,\phi)\to (B,\psi)$ is the top element of $\mathsf{PCA}_\mathcal{C}((A,\phi),(B,\psi))$, which we can represent by $A\times B$ itself. Moreover, $f\colon (A,\phi)\to (B,\psi)$ is a zero morphism if and only if $A\times U\subseteq f$ for some $U\in\psi$. Intuitively, we can view this as the applicative morphism that carries no information: every element of $A$ is represented by the same realizing subset of $B$.

\begin{prop}
The category $\mathsf{PCA}_\mathcal{C}$ has finite pseudocoproducts.
\end{prop}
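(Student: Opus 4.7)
The plan is to combine the pseudoinitial object from the preceding proposition with a binary pseudocoproduct construction. For the binary step, I would define $(A,\phi)+(B,\psi)$ to have underlying PAS $A\times B$ in $\mathcal{C}$, equipped with componentwise application (so $(a_1,b_1)(a_2,b_2)=(a_1 a_2,b_1 b_2)$, defined iff both factors are) and filter $\chi$ generated by $\{U\times V:U\in\phi,\, V\in\psi\}$. This is a PCA because componentwise products of the standard combinators --- $\mathsf{K}_A\times\mathsf{K}_B$, $\mathsf{S}_A\times\mathsf{S}_B$, and so on --- realize the corresponding terms in $A\times B$. The inclusions are the natural relations $\iota_A\subseteq A\times(A\times B)$ given by $(a,(a',b))\in\iota_A \iff a=a'$, and symmetrically for $\iota_B$; both are routinely verified to be applicative morphisms, with condition (c) holding since $\iota_A(U)=U\times B\supseteq U\times V\in\chi$ for any $V\in\psi$.

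For the universal property I would first establish essential surjectivity of precomposition by the inclusions: given $f\colon(A,\phi)\to(C,\chi_C)$ and $g\colon(B,\psi)\to(C,\chi_C)$, define $[f,g]$ by $((a,b),c)\in[f,g]\iff c\in \mathsf{P}_C\cdot f(a)\cdot g(b)$. Tracking is witnessed by a realizer of $\lambda xy.\mathsf{P}(U(\mathsf{P}_0 x)(\mathsf{P}_0 y))(V(\mathsf{P}_1 x)(\mathsf{P}_1 y))$, where $U,V$ track $f,g$, and condition (c) follows from $[f,g](U\times V)=\mathsf{P}_C\cdot f(U)\cdot g(V)\in\chi_C$. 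The isomorphisms $[f,g]\iota_A\cong f$ and $[f,g]\iota_B\cong g$ are realized by $\mathsf{P}_0$ (resp.\@ $\mathsf{P}_1$) in one direction, and by a realizer of $\lambda x.\mathsf{P}\cdot x\cdot g_0$ (for some fixed $g_0\subseteq g(B)\in\chi_C$) in the other.

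The main obstacle will be full faithfulness of the precomposition functor: showing that $h_1\iota_A\leq h_2\iota_A$ and $h_1\iota_B\leq h_2\iota_B$ together imply $h_1\leq h_2$. Naively, from $((a,b),c)\in h_1$ the hypotheses produce $r_A c\in h_2((a,b'))$ and $r_B c\in h_2((a',b))$ for unspecified $b',a'$, and composing these via $h_2$'s tracker $t_2$ merely lands in $h_2((aa',b'b))$, not in $h_2((a,b))$. The trick will be to use condition (c) of $h_2$ to set $u_K:=h_2(\mathsf{K}_A\mathsf{I}_A\times\mathsf{K}_B)\in\chi_C$, so that any element of $u_K$ witnesses membership in $h_2((a_0,b_0))$ for some $a_0\in\mathsf{K}_A\mathsf{I}_A$ and $b_0\in\mathsf{K}_B$; the $\mathsf{K}$-property then gives $a_0\cdot x\in\mathsf{I}_A$ and $b_0\cdot y\cdot z=y$ for all $x,y,z$. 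A first application of $t_2$ to $u_K$ and $r_B c$ lands in $h_2((i,b^*))$ with $i\in\mathsf{I}_A$ and $b^*\cdot z=b$ for all $z$; a second application with $r_A c$ yields an element of $h_2((i\cdot a,b^*\cdot b'))=h_2((a,b))$. Combinatorial completeness in $\chi_C$ with parameters $t_2,u_K,r_A,r_B$ then packages this construction into a single realizer tracking $h_1\leq h_2$.
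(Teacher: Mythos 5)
Your proposal is correct and follows essentially the same route as the paper: same underlying PAS $A\times B$ with coordinatewise application, same generated filter, the same injections and copairing $[f,g]$ via pairing combinators, and the same realizers for the comparison isomorphisms. Your handling of the final ``full faithfulness'' step is a mirror image of the paper's, which uses a realizer of $\lambda x.U(U\cdot k(\mathsf{K}\times\overline{\mathsf{K}})\cdot(Vx))(Wx)$ where yours applies $h_2(\overline{\mathsf{K}}\times\mathsf{K})$ first to the $\iota_B$-witness and then to the $\iota_A$-witness --- the two are interchangeable.
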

\begin{proof}
We have already seen that $\mathsf{PCA}_\mathcal{C}$ has a pseudo-initial object, so consider two PCAs $(A,\phi)$ and $(B,\psi)$. We equip their product $A\times B$ in $\mathcal{C}$ with the coordinatewise application map, that is, we have $D_{A\times B}(a,b,a',b')$ if and only if $D_A(a,a')\wedge D_B(b,b')$, and in this case, $(a,b)\cdot (a',b') = (aa',bb')$. Define $\phi\times\psi = \{U\times V\subseteq A\times B\mid U\in\phi, V\in\psi\}\subseteq\mathcal{P}^\ast(A\times B)$. If $t(\vec{x})$ is a term and $U\in\phi$ and $V\in\psi$ realize $\lambda\vec{x}.t$ with respect to $A$ and $B$ respectively, then $U\times V$ realizes $\lambda\vec{x}.t$ with respect to $A\times B$. This shows that $\phi\times\psi$ is combinatorially complete, so $(A\times B, \langle\phi\times \psi\rangle)$ is a PCA. We claim that this is the pseudocoproduct of $(A,\phi)$ and $(B,\psi)$.

First of all, we have an applicative morphism $\kappa_0\colon (A,\phi)\to (A\times B, \langle\phi\times \psi\rangle)$, defined by $\kappa_0 = \{(a,a',b)\in A\times (A\times B)\mid a=a'\}$. Requirements (a) and (c) are obviously satisfied, and we observe that $\mathsf{I}\times \mathsf{K}$ tracks $\kappa_0$, so requirement (b) is satisfied as well. We define $\kappa_1\colon (B,\psi)\to (A\times B, \langle\phi\times \psi\rangle)$ analogously.

Now suppose that applicative morphisms $f\colon (A,\phi)\to(C,\chi)$ and $g\colon (B,\psi)\to(C,\chi)$ are given. We define $[f,g]\colon (A\times B, \langle\phi\times \psi\rangle)\to (C,\chi)$ as
\[\{(a,b,c)\in (A\times B)\times C\mid \exists p,c',c'': C\s (\mathsf{P}(p)\wedge f(a,c')\wedge g(b,c'')\wedge pc'c''=c)\}.\]

Requirement (a) is clearly satisfied. For requirement (b), if $U,V\in\chi$ track $f$ and $g$ respectively, then any realizer of
\[\lambda xy.\mathsf{P}(U(\mathsf{P}_0 x)(\mathsf{P}_0 y))(V(\mathsf{P}_1 x)(\mathsf{P}_1 y)).\]
tracks $[f,g]$. If $U\in \phi$ and $V\in\psi$ are arbitrary, then $[f,g](U\times V) = \mathsf{P} f(U) g(V)\in \chi$, so by \cref{lift_left}, requirement (c) is satisfied as well.

Furthermore, we have that $\mathsf{P}_0$ realizes $[f,g]\circ \kappa_0\leq f$, whereas any realizer of $\lambda x.\mathsf{P}\cdot x\cdot g(B)$ realizes $f\leq [f,g]\circ \kappa_0$. This means that $[f,g]\circ\kappa_0\simeq f$, and similarly, we prove that $[f,g]\circ\kappa_1\simeq g$.

Now suppose that we have applicative morphisms $h,k\colon (A\times B, \langle\phi\times \psi\rangle)\to (C,\chi)$ such that $h\kappa_0\leq k\kappa_0$ and $h\kappa_1\leq k\kappa_1$. Let $U\in\chi$ be a tracker of $k$, and let $V,W\in\chi$ realize $h\kappa_0\leq k\kappa_0$ and $h\kappa_1\leq k\kappa_1$ respectively. Then any realizer of
\[\lambda x.U(U\cdot k(\mathsf{K}\times\overline{\mathsf{K}})\cdot(Vx))(Wx)\]
realizes the inequality $h\leq k$, which finishes the proof.
\end{proof}

\begin{rem}\label{theproductdoesnotexist!}
One may wonder whether $(A\times B, \langle\phi\times \psi\rangle)$ is also the product of $(A,\phi)$ and $(B,\psi)$. It turns out that this is only true in a weak sense. We can define a projection map $\pi_0\colon (A\times B,\langle \phi\times \psi\rangle)\to (A,\phi)$ by $\pi_0 = \{(a,b,a')\in (A\times B)\times A\mid a=a'\}$; define $\pi_1$ similarly. If $f\colon (C,\chi)\to (A,\phi)$ and $g\colon (C,\chi)\to (B,\psi)$ are applicative morphisms, then we get a new morphism $\langle f,g\rangle \colon (C,\chi)\to (A\times B,\langle \phi\times \psi\rangle)$, defined by
\[\langle f,g\rangle = \{(c,a,b)\in C\times (A\times B)\mid f(c,a)\wedge g(c,b)\}.\]
This morphism satisfies $\pi_0\circ\langle f,g\rangle \simeq f$ and $\pi_1\circ\langle f,g\rangle \simeq g$ (we even have equality here), but it is not necessarily essentially unique with this property. If $h\colon(C,\chi)\to (A\times B,\langle \phi\times \psi\rangle)$ satisfies $\pi_0h\simeq f$ and $\pi_1h\simeq g$ (or even with equality), then we can only guarantee that $h\leq \langle f,g\rangle$, but not that $\langle f,g\rangle\leq h$. 

Later, we shall pass to a larger category $\mathsf{PCA}$, in which we \emph{can} form finite products, and even all small products).
\ruitje
\end{rem}

We now proceed to consider PCAs constructed over different regular categories. In the following, $\mathcal{C}, \mathcal{D}$ and $\mathcal{E}$ will always denote regular categories.

In order to move between two regular categories $\mathcal{C}$ and $\mathcal{D}$, we consider regular functors $p\colon \mathcal{C}\to\mathcal{D}$. These functors preserve the interpretation of regular formulas, and as a result, they preserve the validity of regular sequents.

If $A$ is a PAS over $\mathcal{C}$ and $G\subseteq\mathcal{P}^\ast A$, then we write $p(G) = \{p(U)\mid U\in G\}$, which is a subset of $\mathcal{P}^\ast(p(A))$, since $p$ preserves inhabited objects.

\begin{prop}\label{HF_on_objects}
Let $p\colon \mathcal{C}\to\mathcal{D}$ be a regular functor, let $A$ be a PAS over $\mathcal{C}$, and let $G\subseteq\mathcal{P}^\ast A$. Then:
\begin{itemize}
\item[\textup{(}i\textup{)}]	$p(A)$ is a PAS over $\mathcal{D}$;
\item[\textup{(}ii\textup{)}]	$\langle p(\langle G\rangle)\rangle = \langle p(G)\rangle$;
\item[\textup{(}iii\textup{)}]	if $G$ is combinatorially complete, then so is $p(G)$;
\item[\textup{(}iv\textup{)}]	if $(A,\phi)$ is a PCA over $\mathcal{C}$, then $(p(A),\langle p(\phi)\rangle)$ is a PCA over $\mathcal{D}$.
\end{itemize}
\end{prop}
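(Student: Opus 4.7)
The plan is to exploit the fact that a regular functor $p$ preserves exactly the categorical machinery used in defining PASs, filters, and realizers: finite limits (products and monomorphisms), regular epimorphisms (image factorizations and inhabitedness of subobjects), and the validity of regular sequents. For (i), I would first note that since $p$ preserves the regular epi $A \to 1$ together with the terminal object, $p(A)$ is inhabited. Then the subobject $D \hookrightarrow A \times A$ pushes forward to $p(D) \hookrightarrow p(A) \times p(A)$ and the arrow $D \to A$ to $p(D) \to p(A)$, giving the required partial applicative structure on $p(A)$.

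The workhorse for (ii)--(iv) is a lemma, which I would prove by induction on terms: if $t(\vec x)$ is a term and $\vec U$ is a tuple from $\mathcal{P}^\ast A$ with $t(\vec U)\denotes$, then $t(p(\vec U))\denotes$ in $p(A)$ and $p(t(\vec U)) = t(p(\vec U))$. The inductive step reduces to showing that the application on $\mathcal{P}^\ast$ commutes with $p$, which uses $p(U \times V) = p(U) \times p(V)$, preservation of the subobject inclusion $U \times V \subseteq D$, and preservation of the image factorization $U \times V \epi UV$. With this lemma, (ii) becomes a double-inclusion argument: the inclusion $\langle p(G)\rangle \subseteq \langle p(\langle G\rangle)\rangle$ is immediate from $p(G) \subseteq p(\langle G\rangle)$, while for the reverse, any $V \in \langle G\rangle$ has $t(\vec U) \subseteq V$ for some term $t$ and $\vec U \in G$, whence $t(p(\vec U)) = p(t(\vec U)) \subseteq p(V)$, so $p(V) \in \langle p(G)\rangle$; taking generated filters lifts this to $\langle p(\langle G\rangle)\rangle \subseteq \langle p(G)\rangle$.

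For (iii), I would observe that the two defining conditions for ``$U$ realizes $\lambda \vec x. t$'' from \cref{realizing_set} are regular sequents (after rewriting application via the tertiary relation $ab = c$, as explicitly noted in the paper), and that regular functors preserve validity of such sequents. Thus, if $U \in G$ realizes $\lambda \vec x. t$ in $A$, then $p(U) \in p(G)$ realizes the same term in $p(A)$, so $p(G)$ is combinatorially complete. Finally, (iv) assembles the pieces: $\langle p(\phi)\rangle$ is a filter on $p(A)$ by the earlier general construction of generated filters, and it is combinatorially complete since $p(\phi)$ already is by (iii) and combinatorial completeness is inherited by supersets. The main obstacle will be verifying the term-preservation lemma cleanly, and in particular checking at the inductive step that $p$ commutes with the image factorization of $U \times V \to A$, which uses the full force of $p$ being regular rather than merely left-exact.
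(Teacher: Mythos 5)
Your proposal is correct and follows essentially the same route as the paper: the term-preservation fact $p(t(\vec U)) = t(p(\vec U))$ (which the paper uses implicitly in (ii) and records for the base case in the remark immediately following the proposition), the same double-inclusion argument for (ii), and preservation of regular sequents for (iii). Your explicit induction on terms and the check that $p$ commutes with the image factorization are just a more detailed writing-out of what the paper leaves to the reader.
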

\begin{proof}
(i) Since $p$ is left exact, the object $p(A)$ inherits a partial applicative structure from $A$ in the obvious way. Explicitly, its domain is $p(D)\subseteq p(A)\times p(A)$, and the required map $p(D)\to p(A)$ is the image of the application map $D\to A$ under $p$.

(ii) First of all, we observe that $p(G) \subseteq p(\langle G\rangle) \subseteq \langle p(\langle G\rangle)\rangle$, which implies $\langle p(G)\rangle \subseteq \langle p(\langle G\rangle)\rangle$. For the converse, suppose that we have an element $V\in p(\langle G\rangle)$. This means that there exist a $V'\subseteq A$, a term $t(x_0, \ldots, x_n)$ and $U_0, \ldots, U_n\in G$ such that $t(\vec{U})\!\denotes$, $t(\vec{U})\subseteq V'$ and $p(V') = V$. Now we observe that $t(p(U_0), \ldots, p(U_n))$ also denotes, and
\[t(p(U_0), \ldots, p(U_n)) = p(t(U_0, \ldots, U_n)) \subseteq p(V') = V.\]
Since $p(U_i)\in p(G)$ for each $i$, this yields that $V\in \langle p(G)\rangle$. We conclude that $p(\langle G\rangle) \subseteq \langle p(G)\rangle$, hence also $\langle p(\langle G\rangle)\rangle \subseteq \langle p(G)\rangle$.

(iii)	Let $t(\vec{x})$ be a term, and suppose that $U\in G$ realizes $\lambda\vec{x}.t$ w.r.t.\@ $A$. Since $p$ preserves apoplication and the validity of regular sequents, it follows that $p(U)\in p(G)$ realizes $\lambda\vec{x}.t$ w.r.t.\@ $p(A)$, as desired.

(iv) now follows immediately from (iii).
\end{proof}

\begin{rem}
In the proof above, we used the following fact: if $U,V\in\mathcal{P}^\ast A$ satisfy $UV\!\denotes$, then $p(U)\cdot p(V)$ is defined as well, and equal to $p(UV)$. The converse does \emph{not} hold, i.e., we can have that $p(U)\cdot p(V)\denotes$ without $UV$ being defined. To see this, consider for example the unique functor $p\colon\mathcal{C}\to\mathbf{1}$. \ruitje
\end{rem}

For a PCA $(A,\phi)$ over $\mathcal{C}$ and a regular functor $p\colon \mathcal{C}\to\mathcal{D}$, we shall denote the PCA $(p(A),\langle p(\phi)\rangle)$ by $p^\ast(A,\phi)$. Our next goal is to define $p^\ast$ on applicative morphisms.
\begin{prop}\label{HF_on_arrows}
Let $p\colon \mathcal{C}\to\mathcal{D}$ be a regular functor, let $A$ be an object of $\mathcal{C}$, let $(B,\psi)$ be a PCA over $\mathcal{C}$, and let $f,f'\subseteq A\times B$.
\begin{itemize}
\item[\textup{(}i\textup{)}]	If $f\leq f'$, then $p(f)\leq p(f')$. \textup{(}Here we see $p(f)$ and $p(f')$ as relations between $p(A)$ and $p(B)$, the latter being the underlying object of $p^\ast(B,\psi)$.\textup{)}
\item[\textup{(}ii\textup{)}]	If $A$ is a PAS and $f\colon A\to(B,\psi)$ is an applicative premorphism, then $p(f)$ is an applicative premorphism $p(A)\to p^\ast(B,\psi)$.
\item[\textup{(}iii\textup{)}]	If $(A,\phi)$ is a PCA and $f\colon (A,\phi)\to(B,\psi)$ is an applicative morphism, then $p(f)$ is an applicative morphism $p^\ast(A,\phi)\to p^\ast(B,\psi)$.
\end{itemize}
\end{prop}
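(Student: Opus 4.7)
The plan is to exploit the fact that a regular functor $p$ preserves the interpretation of regular formulas, and hence the validity of regular sequents, to push forward all the relevant realizing sets along $p$. In each case, the $\psi$-realizer produced on the domain side will simply have $p$ applied to it, landing in $p(\psi)\subseteq\langle p(\psi)\rangle$.

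For (i), given $U\in\psi$ realizing $f\leq f'$ — that is, witnessing the sequent $f(a,b)\wedge U(r)\models_{a,r,b} rb\denotes\wedge f'(a,rb)$ — I would apply $p$ to this sequent. Since $p$ preserves regular formulas, and since $p(f)$ and $p(f')$ are by definition the pushforwards of $f$ and $f'$, the image $p(U)$ realizes the analogous sequent for $p(f)\leq p(f')$ over $\mathcal{D}$. Because $p(U)\in p(\psi)\subseteq\langle p(\psi)\rangle$, this gives the required realizer in $p^\ast(B,\psi)$.

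For (ii), totality of $p(f)$ follows from $p$ preserving the regular sequent $\models_a\exists b:B\,(f(a,b))$. For the tracking condition, if $U\in\psi$ tracks $f$, then applying $p$ to the defining sequent of a tracker yields that $p(U)$ tracks $p(f)\colon p(A)\to p^\ast(B,\psi)$, with $p(U)\in\langle p(\psi)\rangle$ as above. The only subtlety is to confirm that the partial application structure on $p(A)$ from \cref{HF_on_objects}(i) interacts correctly with $p$ applied to terms; this is routine because $p$ preserves the domain subobject $D$ and the arrow $D\to A$.

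For (iii), which is the main obstacle, I want to verify requirement (c): for every $V\in\langle p(\phi)\rangle$, $p(f)(V)\in\langle p(\psi)\rangle$. Here I would invoke \cref{lift_left} together with \cref{HF_on_objects}(iii): since $p(\phi)$ is combinatorially complete in $\mathcal{P}^\ast(p(A))$ and generates $\langle p(\phi)\rangle$, it suffices to check (c') on the generators, i.e., to verify $p(f)(p(U))\in\langle p(\psi)\rangle$ for each $U\in\phi$. The key computational step is the identity
\[p(f)(p(U))=p(f(U)),\]
which holds because $f(U)$ is defined by the regular formula $\exists a:A\,(U(a)\wedge f(a,b))$ and $p$ preserves the interpretation of such formulas (being a regular functor). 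Once this identity is in hand, applicative morphism of $f$ gives $f(U)\in\psi$, hence $p(f(U))\in p(\psi)\subseteq\langle p(\psi)\rangle$, finishing the proof. The main difficulty lies exactly in pinning down that this image identity holds, since pushforward of subobjects along $p$ must commute with the application-image construction — but this is a direct consequence of regularity.
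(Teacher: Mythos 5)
Your proposal is correct and follows essentially the same route as the paper: parts (i) and (ii) by pushing the realizing/tracking sets forward along $p$ using preservation of regular sequents, and part (iii) by reducing requirement (c) to the generators $p(\phi)$ via \cref{lift_left} and the identity $p(f)(p(U)) = p(f(U))$, which holds because $p$ preserves regular logic. Nothing is missing.
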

\begin{proof}
(i) Since $p$ preserves application and regular sequents, we have that $p(U)$ realizes $p(f)\leq p(f')$ whenever $U\in\phi$ realizes $f\leq f'$.

(ii) Requirement (a) follows since $p$ preserves total relations. For requirement (b), we again use the fact that $p$ preserves regular sequents to see that: if $U\in\phi$ tracks $f$, then $p(U)$ tracks $p(f)$.

(iii)	Suppose that $U\subseteq A$. Since $p$ preserves regular logic, we have that $p(f(U)) = p(f)(p(U))$. Therefore, if $U\in\phi$, then $f(U)\in\psi$, so $p(f)(p(U)) = p(f(U))\in p(\psi)\subseteq \langle p(\psi)\rangle$. By \cref{lift_left}, $p(f)$ also satisfies requirement (c).
\end{proof}

If $f\colon (A,\phi)\to (B,\psi)$ is an arrow of $\mathsf{PCA}_\mathcal{C}$, then we write $p^\ast f$ for the applicative morphism $p(f)\colon p^\ast(A,\phi)\to p^\ast(B,\psi)$.
\begin{thm}
If $p\colon \mathcal{C}\to\mathcal{D}$ is a regular functor, then $p^\ast$ is a preorder-enriched functor $\mathsf{PCA}_\mathcal{C}\to\mathsf{PCA}_\mathcal{D}$. Moreover, the construction $p\mapsto p^\ast$ is functorial.
\end{thm}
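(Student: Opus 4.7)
The plan is to split the claim into two halves: that each $p^\ast$ is itself a preorder-enriched functor, and that the assignment $p\mapsto p^\ast$ respects identities and composition of regular functors.

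For the first half, I would verify three conditions. That $p^\ast$ preserves the preorder $\leq$ on hom-sets is exactly \cref{HF_on_arrows}(i). That $p^\ast$ sends identities to identities follows because the identity applicative morphism on $(A,\phi)$ is the diagonal relation $\delta_A$, and since $p$ is left exact we have $p(\delta_A) = \delta_{p(A)}$. The substantive point is preservation of composition: given $f\subseteq A\times B$ and $g\subseteq B\times C$, their composition $gf$ is defined by the regular formula $\exists b:B\s (f(a,b)\wedge g(b,c))$, and because $p$ preserves the interpretation of regular logic we have $p(gf) = p(g)\circ p(f)$ as subobjects of $p(A)\times p(C)$. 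Combined with \cref{HF_on_arrows}(iii), this shows $p^\ast(gf) = p^\ast(g)\circ p^\ast(f)$ as applicative morphisms.

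For the second half, the identity case $(\id_\mathcal{C})^\ast = \id_{\mathsf{PCA}_\mathcal{C}}$ is immediate: the identity functor leaves every relation unchanged, and $\langle \phi\rangle = \phi$ for any existing filter $\phi$. For the composition case, given regular functors $p\colon\mathcal{C}\to\mathcal{D}$ and $q\colon\mathcal{D}\to\mathcal{E}$ and a PCA $(A,\phi)$ over $\mathcal{C}$, I would compare
\[(qp)^\ast(A,\phi) = \bigl(qp(A),\, \langle (qp)(\phi)\rangle\bigr) \quad\text{and}\quad q^\ast p^\ast(A,\phi) = \bigl(qp(A),\, \langle q(\langle p(\phi)\rangle)\rangle\bigr),\]
and invoke \cref{HF_on_objects}(ii) (applied to the regular functor $q$ and the set $p(\phi)\subseteq\mathcal{P}^\ast(p(A))$) to conclude that these two filters agree. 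On applicative morphisms the equation reduces to $(qp)(f) = q(p(f))$, which is just functoriality of the underlying action of regular functors on relations.

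The only nontrivial ingredient is the preservation of relation composition by $p$, which is a standard fact but is the one place where regularity of $p$ enters essentially beyond what \cref{HF_on_objects} already encodes; everything else is either immediate or has been established in the preceding propositions.
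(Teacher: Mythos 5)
Your proposal is correct and follows essentially the same route as the paper's proof: functoriality of $p^\ast$ via $p(\delta_A)=\delta_{p(A)}$ and $p(gf)=p(g)\circ p(f)$ (both from left exactness and preservation of regular logic) combined with \cref{HF_on_arrows}, and functoriality of $p\mapsto p^\ast$ via \cref{HF_on_objects}(ii) applied to $p(\phi)$. Nothing is missing.
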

\begin{proof}
Since $p$ is left exact, we have that $p(\delta_A) = \delta_{p(A)}$ for every object $A$ of $\mathcal{C}$. Moreover, if $f\subseteq A\times B$ and $g\subseteq B\times C$, then $p(gf) = p(g)\circ p(f)$ since $p$ preserves regular logic. Together with \cref{HF_on_arrows}, this implies that $p^\ast$ is a preorder-enriched functor.

It is clear that $\id_\mathcal{C}^\ast = \id_{\mathsf{PCA}_\mathcal{C}}$. Now suppose that we have regular functors $\mathcal{C}\stackrel{p}{\longrightarrow}\mathcal{D}\stackrel{q}{\longrightarrow}\mathcal{E}$ and a PCA $(A,\phi)$ over $\mathcal{C}$. The partial applicative structures $q(p(A))$ and $(qp)(A)$ clearly coincide. Moreover, by \cref{HF_on_objects}(ii), we have
\[\langle q(\langle p(\phi)\rangle)\rangle = \langle q(p(\phi))\rangle = \langle (qp)(\phi)\rangle,\]
so $q^\ast(p^\ast(A,\phi)) = (qp)^\ast(A,\phi)$. Finally, if $f$ is an arrow of $\mathsf{PCA}_\mathcal{C}$, then $q^\ast(p^\ast(f))$ and $(qp)^\ast(f)$ are clearly the same relation, which completes the proof.
\end{proof}
Let $\mathsf{REG}$ denote the category of locally small regular categories and regular functors. We may, of course, consider this as a 2-category, the 2-cells being natural transformations between regular functors. One might wonder whether $(-)^\ast$ can be extended to a 2-functor from regular categories into (preorder-enriched) categories. This does not seem to be the case, but we can get a partial result, which suffices for our purposes.

Suppose that $p,q\colon \mathcal{C}\to\mathcal{D}$ are regular functors and that $\mu\colon p\To q$ is a natural transformation. If $(A,\phi)$ is a PCA over $\mathcal{C}$, then $\mu_A$ is an arrow $p(A)\to q(A)$. We can also view that arrow as its graph, which is the (single-valued) relation
\[\{(a,b)\in p(A)\times q(A)\mid \mu_A(a) = b\},\]
that we denote by $\overline{\mu_A}$.
\begin{prop}\label{transporting_mu}
Let $p,q\colon \mathcal{C}\to\mathcal{D}$ be regular functors and let $\mu\colon p\To q$ be a natural transformation.
\begin{itemize}
\item[\textup{(}i\textup{)}]	If $(A,\phi)$ is a PCA over $\mathcal{C}$, then $\overline{\mu_A}$ is an applicative premorphism $p^\ast(A,\phi)\to q^\ast(A,\phi)$.
\item[\textup{(}ii\textup{)}]	If $f\colon (A,\phi)\to (B,\psi)$ is an applicative morphism, then $\overline{\mu_B}\circ p(f)\leq q(f)\circ\overline{\mu_A}$.
\item[\textup{(}iii\textup{)}]	If $\mu$ is an isomorphism, then $\mu^\ast_{(A,\phi)}:=\overline{\mu_A}$ yields a natural isomorphism $\mu^\ast\colon p^\ast\To q^\ast$.
\end{itemize}
\end{prop}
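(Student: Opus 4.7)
My plan is to apply naturality of $\mu$ to the various structure maps (the application map, inclusions such as $f\hookrightarrow A\times B$ or $V\hookrightarrow A$ for $V\in\phi$) and to read off trackers and realizers from the resulting subobject inclusions. For part \textup{(}i\textup{)}, the relation $\overline{\mu_A}$ is total because it is a graph. As a tracker I would take any realizer of $\lambda xy.xy$ in $\langle q(\phi)\rangle$, for instance $\mathsf{I}$; what then needs verifying is that if $a\cdot a'\denotes$ in $p(A)$ then $\mu_A(a)\cdot\mu_A(a')\denotes$ in $q(A)$ and equals $\mu_A(aa')$. This is naturality of $\mu$ applied both to $D\hookrightarrow A\times A$ and to the application map $D\to A$, combined with the identification $\mu_{A\times A}=(\mu_A,\mu_A)$ that comes from left-exactness.

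For part \textup{(}ii\textup{)}, naturality of $\mu$ at $f\hookrightarrow A\times B$ says that $\mu_{A\times B}$ restricts to a morphism $p(f)\to q(f)$, which internally to $\mathcal{D}$ reads
\[p(f)(a,b)\models_{a:p(A),\,b:p(B)} q(f)(\mu_A(a),\mu_B(b)).\]
Unfolding the composite $\overline{\mu_B}\circ p(f)$ then shows that it is contained in $q(f)\circ\overline{\mu_A}$ as subobjects of $p(A)\times q(B)$, so the inequality is witnessed by any realizer of $\lambda x.x$ in $\langle q(\psi)\rangle$.

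For part \textup{(}iii\textup{)}, three things need checking. First, that the premorphism of \textup{(}i\textup{)} is an actual applicative morphism: by \cref{lift_left} it suffices to show that $\overline{\mu_A}(p(V))\in\langle q(\phi)\rangle$ for $V\in\phi$, and since $\mu$ is a natural iso, naturality at $V\hookrightarrow A$ identifies $\mu_A$ as an isomorphism of subobjects $p(V)\simeq q(V)$ of $q(A)$, so $\overline{\mu_A}(p(V))=q(V)\in q(\phi)$. Second, naturality of $\mu^\ast$: one direction is \textup{(}ii\textup{)}, and the reverse either follows by applying \textup{(}ii\textup{)} to $\mu^{-1}$ or, more strongly, by observing that when $\mu$ is invertible every $b'\in q(B)$ is uniquely of the form $\mu_B(b)$, so the two composites of \textup{(}ii\textup{)} coincide on the nose as subobjects. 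Third, $\overline{(\mu^{-1})_A}$ is a two-sided inverse of $\mu^\ast_{(A,\phi)}$ in $\mathsf{PCA}_\mathcal{D}$: composing on either side gives the graph of an identity morphism, i.e., $\delta_{q(A)}$ or $\delta_{p(A)}$.

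The main subtlety I anticipate is keeping careful track in part \textup{(}iii\textup{)} of which pairs of relations coincide strictly as subobjects (needed to produce the equalities rather than just $\leq$-equivalences demanded by natural isomorphism) versus which are merely equivalent up to $\leq$. The asymmetry visible already in \textup{(}ii\textup{)}---one can insert $\mu_B$ via a tracker but cannot invert it---is exactly what the isomorphism hypothesis in \textup{(}iii\textup{)} removes, and this is where genuine use of the invertibility of $\mu$ enters the argument.
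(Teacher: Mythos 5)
Your proposal is correct and follows essentially the same route as the paper: naturality of $\mu$ at $D\subseteq A\times A$ and the application map gives that $\mathsf{I}$ tracks $\overline{\mu_A}$, naturality at $f\subseteq A\times B$ gives the subobject inclusion realized by $\mathsf{I}$ in (ii), and in (iii) the paper likewise uses \cref{lift_left} with $\overline{\mu_A}(p(U))=q(U)$, takes $\overline{\mu_A^{-1}}$ as the inverse, and proves the naturality squares commute as an on-the-nose equality of subobjects by lifting $b\in q(B)$ along $\mu_B$. Your closing remark correctly identifies the one point where care is needed (strict equality versus mere $\leq$-equivalence), and your stronger second option for the reverse inclusion is exactly the paper's argument.
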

\begin{proof}
(i)  Requirement (a) clearly holds. By applying the naturality of $\mu$ to the inclusion $D\subseteq A\times A$ and the application map $D\to A$, we can see that
\[ab\denotes\ \models_{a,b:p(A)} \mu_A(a)\cdot\mu_A(b)\denotes\wedge\ \mu_A(a)\cdot\mu_A(b)=\mu_A(ab).\]
This means that $\mathsf{I}$ tracks $\overline{\mu_A}$, so requirement (b) is satisfied as well.

(ii) By applying the naturality of $\mu$ to the inclusion $f\subseteq A\times B$, we see that
\[p(f)(a,b) \models_{a:p(A); b:p(B)} q(f)(\mu_A(a),\mu_ B(b)).\]
This implies that $\overline{\mu_B}\circ p(f)\subseteq q(f)\circ\overline{\mu_A}$, so in particular, we have $\overline{\mu_B}\circ p(f)\leq q(f)\circ\overline{\mu_A}$.

(iii)	First of all, if $U\in\phi$, then the naturality square for the inclusion $U\subseteq A$ tells us that $\overline{\mu_A}(p(U)) = q(U) \in q(\phi)\subseteq \langle q(\phi)\rangle$. By \cref{lift_left}, $\overline{\mu_A}$ is an applicative morphism $p^\ast(A,\phi)\to q^\ast(A,\phi)$. Moreover, this morphism is clearly invertible, its inverse being $\overline{\mu^{-1}_A}$.

If $f\colon (A,\phi)\to (B,\psi)$ is an applicative morphism, then we already know that $\overline{\mu_B}\circ p(f)\subseteq q(f)\circ\overline{\mu_A}$. For the converse inclusion, reason inside $\mathcal{D}$ and let $a\in p(A)$ and $b\in q(B)$ such that $(q(f)\circ\overline{\mu_A})(a,b)$, i.e., $q(f)(\mu_A(a),b)$. Then there exists a $b'\in p(B)$ such that $\mu_B(b') = b$. The naturality diagram for the inclusion $f\subseteq A\times B$ now allows us to conclude that $p(f)(a,b')$. Together with $\mu_B(b') = b$, this yields $(\overline{\mu_B}\circ p(f))(a,b)$, as desired.
\end{proof}
\begin{rem}\label{no_transport}
The relation $\overline{\mu_A}$ does not, in general, seem to be an applicative morphism. If $U\subseteq A$, then the naturality of $\mu$ only guarantees that $\overline{\mu_A}(p(U))\subseteq q(U)$, but not that equality holds. So it could be possible that $\overline{\mu_A}$ sends an element of $p(\phi)$ to a subobject of $q(A)$ that is too small to be in $\langle q(\phi)\rangle$. \ruitje
\end{rem}

At this point, we have enough data to perform a Grothendieck construction to obtain a category $\mathsf{PCA}$ together with an forgetful functor $\mathsf{PCA}\to\mathsf{REG}$ whose fiber above $\mathcal{C}$ is exactly $\mathsf{PCA}_\mathcal{C}$.
\begin{defi}\label{PCA}
The 2-category $\mathsf{PCA}$ is defined as follows.
\begin{itemize}
\item[(i)]	The objects are triples $(\mathcal{C},A,\phi)$, where $(A,\phi)$ is a PCA over the regular category $\mathcal{C}$. We will usually just write $(A,\phi)$ instead of $(\mathcal{C},A,\phi)$.
\item[(ii)]	If $(A,\phi)$ and $(B,\psi)$ are PCAs over $\mathcal{C}$ and $\mathcal{D}$ respectively, then an arrow $(A,\phi)\to(B,\psi)$ is a pair $(p,f)$, where $p\colon \mathcal{C}\to\mathcal{D}$ is a regular functor and $f\colon p^\ast(A,\phi)\to (B,\psi)$ is an applicative morphism;
\item[(iii)]	A 2-cell $(p,f)\To(q,g)$ is a natural transformation $\mu\colon p\To q$ such that $f\leq g\circ \overline{\mu_A}$.
\end{itemize}
An arrow of $\mathsf{PCA}$ is also called an \emph{applicative morphism}, whereas a 2-cell is called an \emph{applicative transformation}. \ruitje
\end{defi}
\begin{conv}
When writing $(A,\phi)$ instead of $(\mathcal{C},A,\phi)$, we drop the reference to the underlying regular category $\mathcal{C}$ of $(A,\phi)$. For example, we write that $(p,f)\colon (A,\phi)\to (B,\psi)$ is an applicative morphism, thereby understanding that $p$ is a regular functor between the underlying regular categories of $(A,\phi)$ and $(B,\psi)$. We will only specify the underlying category when necessary, i.e., when it plays a role in the argument. \ruitje
\end{conv}

As usual, we identify an arrow $f$ of $\mathsf{PCA}_\mathcal{C}$ with the arrow $(\id_\mathcal{C},f)$ of $\mathsf{PCA}$. In particular, every applicative morphism in the sense of \cref{applicative_morphism} is also an applicative morphism in the sense of \cref{PCA}. Concerning item (iii), since the applicative premorphism $\overline{\mu_A}$ is always single-valued, we can easily formulate the given inequality directly. If $(p,f),(q,g)\colon (A,\phi)\to(B,\psi)$, then $U\in\psi$ realizes the inequality $f\leq g\circ\overline{\mu_A}$ if and only if
\[f(a,b)\wedge U(r) \models_{a:p(A); r,b:B} rb\denotes\wedge\ g(\mu_A(a),rb).\]
We will also say that such a $U$ tracks the applicative transformation $\mu$.

\begin{thm}
$\mathsf{PCA}$ as defined above is indeed a 2-category.
\end{thm}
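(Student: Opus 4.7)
I will verify that $\mathsf{PCA}$ carries the data of a 2-category: composition and identity 1-cells, identity 2-cells, vertical and horizontal composition of 2-cells, together with strict associativity, unitality, and the interchange law.

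\textbf{1-cell structure.} Given $(p,f)\colon (A,\phi)\to(B,\psi)$ and $(q,g)\colon (B,\psi)\to(C,\chi)$, I define $(q,g)\circ(p,f) := (qp,\, g\circ q^\ast f)$, where $q^\ast f\colon (qp)^\ast(A,\phi)\to q^\ast(B,\psi)$ using $q^\ast p^\ast = (qp)^\ast$ from the functoriality of $p\mapsto p^\ast$; the composite with $g$ is an applicative morphism by \cref{compose}(v). The identity 1-cell on $(A,\phi)$ is $(\id_\mathcal{C},\delta_A)$. Unitality is immediate. Strict associativity reduces, on the second coordinate, to the identity $r^\ast(g\circ q^\ast f) = r^\ast g\circ r^\ast q^\ast f = r^\ast g\circ (rq)^\ast f$, which holds because regular functors preserve composition of relations (they preserve the validity of regular formulas, and relation composition is defined by such a formula) together with the functoriality of $(-)^\ast$.

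\textbf{Vertical composition of 2-cells.} The identity 2-cell on $(p,f)$ is $\id_p$, and the required inequality $f\leq f\circ\overline{\id_{p(A)}} = f\circ\delta_{p(A)} = f$ is trivial. For vertically composable 2-cells $\mu\colon(p,f)\To(q,g)$ and $\nu\colon(q,g)\To(r,h)$, I set $\nu\cdot\mu$ to be the vertical composition of natural transformations; since the graph construction commutes with composition, $\overline{(\nu\cdot\mu)_A} = \overline{\nu_A}\circ\overline{\mu_A}$. Applying \cref{compose}(iii) to $g\leq h\circ\overline{\nu_A}$ with the relation $\overline{\mu_A}$ yields $g\circ\overline{\mu_A}\leq h\circ\overline{\nu_A}\circ\overline{\mu_A}$; chaining with $f\leq g\circ\overline{\mu_A}$ and transitivity of $\leq$ (\cref{compose}(i)) gives the desired inequality. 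Strict associativity and unitality are inherited from natural transformations.

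\textbf{Horizontal composition.} I introduce left and right whiskering and then define $\nu\ast\mu := (\nu q)\cdot (r\mu) = (s\mu)\cdot (\nu p)$, the two forms coinciding by naturality of $\nu$ at the arrow $\mu_A$. For right whiskering of $\mu\colon(p,f)\To(q,g)$ by $(r,h)\colon(B,\psi)\to(C,\chi)$, I need $h\circ r^\ast f\leq h\circ r^\ast g\circ\overline{r(\mu_A)}$: applying $r$ to $f\leq g\circ\overline{\mu_A}$ via \cref{HF_on_arrows}(i) and using that $r$ preserves composition of relations and graphs of arrows yields $r^\ast f\leq r^\ast g\circ\overline{r(\mu_A)}$, after which \cref{compose}(ii) absorbs $h$ on the left. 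Left whiskering is the crux: for $\nu\colon(r,h)\To(s,k)$ and $(p,f)$, I need $h\circ r^\ast f\leq k\circ s^\ast f\circ\overline{\nu_{p(A)}}$. Starting from $h\leq k\circ\overline{\nu_B}$, \cref{compose}(iii) gives $h\circ r^\ast f\leq k\circ\overline{\nu_B}\circ r^\ast f$; then \cref{transporting_mu}(ii), applied to the applicative morphism $f\colon p^\ast(A,\phi)\to (B,\psi)$ and the natural transformation $\nu$, gives $\overline{\nu_B}\circ r^\ast f\leq s^\ast f\circ\overline{\nu_{p(A)}}$, and pre-composing with $k$ via \cref{compose}(ii) finishes the step.

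\textbf{Axioms.} Strict associativity and unitality of horizontal composition, and the middle-four interchange law, reduce to the corresponding laws for natural transformations once well-definedness is established, because a 2-cell in $\mathsf{PCA}$ is determined by its underlying natural transformation and the defining side condition is preserved under all these operations. The main obstacle is the left-whiskering step: the defining inequality of the 2-cell $\nu$ lives over $B$, not over $p(A)$, and pulling it back across the applicative morphism $f$ requires precisely \cref{transporting_mu}(ii); it is essential here that the target $k$ be an applicative morphism, not merely a premorphism, so that \cref{compose}(ii) can absorb it on the left.
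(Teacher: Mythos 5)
Your proposal is correct and follows essentially the same route as the paper: composition of 1-cells via the Grothendieck construction, vertical composition by transitivity of $\leq$, post-whiskering by applying the regular functor to the defining inequality and absorbing the outer applicative morphism via \cref{compose}(ii), pre-whiskering via \cref{transporting_mu}(ii), and the remaining axioms inherited from $\mathsf{REG}$. Your identification of \cref{transporting_mu}(ii) as the crux of the pre-whiskering step is exactly the paper's key move.
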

\begin{proof}
The composition of two applicative morphisms $(A,\phi)\stackrel{(p,f)}{\longrightarrow}(B,\psi)\stackrel{(q,g)}{\longrightarrow}(C,\chi)$ is given by $(qp, g\circ q^\ast f) = (qp, g\circ q(f))$. It is well-known from the theory of Grothendieck fibrations that this yields a 1-category.

We define the vertical and horizontal composition of 2-cells as in $\mathsf{REG}$. Suppose that we have parallel applicative morphisms $(p,f),(q,g),(r,h)\colon (A,\phi)\to(B,\psi)$ and that \[(p,f)\stackrel{\mu}{\To} (q,g)\stackrel{\nu}{\To} (r,h)\] are applicative transformations. Then $\nu\mu$ is an applicative transformation $(p,f)\To(r,h)$ since $f\leq g\circ \overline{\mu_A}\leq h\circ\overline{\nu_A}\circ\overline{\mu_A} = h\circ\overline{\nu_A\mu_A}$. 

Now suppose that $(p,f),(q,g)\colon (A,\phi)\to(B,\psi)$ are applicative morphisms and that $\mu$ is an applicative transformation $(p,f)\To (q,g)$. Let $(r,h)\colon (B,\psi)\to (C,\chi)$ be another applicative morphism. Since $r$ is left exact, we have $\overline{r(\mu_A)} = r(\overline{\mu_A})$. Now we see that
\[h\circ r(f) \leq h\circ r(g\circ\overline{\mu_A}) = h\circ r(g) \circ r(\overline{\mu_A}) =h\circ r(g) \circ  \overline{r(\mu_A)} ,\]
so $r\mu$ is an applicative transformation $(r,h)\circ (p,f) \To (r,h)\circ(q,g)$.

On the other hand, if $(r,h)\colon (C,\chi)\to (A,\phi)$ is another applicative morphism, then
\[f\circ p(h) \leq g\circ\overline{\mu_A}\circ p(h) \leq g\circ q(h)\circ \overline{\mu_{r(C)}},\]
so $\mu r$ is an applicative transformation $(p,f)\circ(r,h)\To (q,g)\circ (r,h)$.

All the required equations for 2-cells in $\mathsf{PCA}$ are inherited from $\mathsf{REG}$.
\end{proof}



As we saw in \cref{theproductdoesnotexist!}, the fibers $\mathsf{PCA}_\mathcal{C}$ have products only in a weak sense. By passing to $\mathsf{PCA}$, we can form all small products.
\begin{prop}
The category $\mathsf{PCA}$ has small 2-products.
\end{prop}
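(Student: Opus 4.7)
Given a small family $\{(A_i, \phi_i)\}_{i \in I}$ of PCAs, with each $(A_i, \phi_i)$ over a regular category $\mathcal{C}_i$, the plan is to construct an explicit product in $\mathsf{PCA}$ and verify the strict 2-universal property by reducing everything to the componentwise level.

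First, I would form the product $\mathcal{C} := \prod_i \mathcal{C}_i$ in $\mathsf{REG}$ (a standard fact: regular categories and regular functors admit componentwise small products), with projections $\pi_i \colon \mathcal{C} \to \mathcal{C}_i$. The underlying object is $A := (A_i)_i$, carrying the componentwise partial applicative structure with $D_A = (D_{A_i})_i$ and $(a_i)_i \cdot (b_i)_i = (a_i b_i)_i$. Since subobjects of $(A_i)_i$ in $\mathcal{C}$ correspond bijectively to tuples $(U_i)_i$ with $U_i \subseteq A_i$, and such a tuple is inhabited iff each $U_i$ is, I would set
\[
\phi := \{(U_i)_i \in \mathcal{P}^\ast A \mid U_i \in \phi_i \text{ for every } i\}.
\]
Upwards closure and closure under application hold componentwise, so $\phi$ is a filter; combinatorial completeness is witnessed by $(W_i)_i$ where $W_i \in \phi_i$ realizes the given term $\lambda \vec{x}.\, t$ in $A_i$.

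Second, I would write down the projections $(\pi_i, \delta_{A_i}) \colon (A, \phi) \to (A_i, \phi_i)$. Note that $\pi_i^\ast(A, \phi) = (A_i, \langle \pi_i(\phi) \rangle)$; using that each $\phi_j$ contains $\mathsf{K}_j$ and is upwards closed, hence contains $A_j$, every $U_i \in \phi_i$ is the image under $\pi_i$ of the tuple equal to $U_i$ in coordinate $i$ and $A_j$ elsewhere. Thus $\pi_i(\phi) = \phi_i$ and $\pi_i^\ast(A, \phi) = (A_i, \phi_i)$ on the nose, so the identity $\delta_{A_i}$ is trivially an applicative morphism.

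Third, for the 1-dimensional universal property, given applicative morphisms $(p_i, f_i) \colon (B, \psi) \to (A_i, \phi_i)$ with $(B, \psi)$ over $\mathcal{D}$, the universal property of $\mathcal{C}$ in $\mathsf{REG}$ yields a unique $p \colon \mathcal{D} \to \mathcal{C}$ with $\pi_i p = p_i$. Because $p(B) = (p_i(B))_i$ and subobjects of $p(B) \times A$ correspond bijectively to tuples $(h_i \subseteq p_i(B) \times A_i)_i$, the tuple $(f_i)_i$ assembles into a relation $f$. I would check the three conditions for an applicative morphism $p^\ast(B, \psi) \to (A, \phi)$ componentwise: totality and tracking (via the tuple of individual trackers) are immediate, and for condition (c) I would invoke \cref{lift_left} to reduce to showing $f(p(U)) = (f_i(p_i(U)))_i \in \phi$ for $U \in \psi$, which holds because each $f_i$ is an applicative morphism. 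Strict uniqueness on 1-cells follows from the strict uniqueness of $p$ in $\mathsf{REG}$ combined with the bijection between subobjects of $p(B) \times A$ and tuples.

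Finally, for the 2-cells, a natural transformation $\mu \colon p \To q$ is uniquely determined by the tuple $\mu_i := \pi_i \mu \colon p_i \To q_i$, and under the identification of subobjects with tuples we have $\overline{\mu_B} = (\overline{(\mu_i)_B})_i$. Hence a realizer $U \in \phi$ of $f \leq g \circ \overline{\mu_B}$ decomposes uniquely as a tuple $(U_i)_i$ of realizers $U_i \in \phi_i$ of $f_i \leq g_i \circ \overline{(\mu_i)_B}$, yielding an isomorphism of hom-categories
\[
\mathsf{PCA}((B, \psi), (A, \phi)) \;\cong\; \prod_i \mathsf{PCA}((B, \psi), (A_i, \phi_i)).
\]
The main thing to keep straight is bookkeeping across the Grothendieck construction---verifying that data and 2-cells up in $\mathsf{PCA}$ really do decompose coordinatewise in the promised manner---but no new ideas beyond those already in \cref{HF_on_objects}--\ref{lift_left} are needed.
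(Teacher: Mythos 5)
Your proposal is correct and follows essentially the same route as the paper: the componentwise product category with the filter $\phi=\{(U_i)_i\mid U_i\in\phi_i\}$, the projections $(\pi_i,\delta_{A_i})$, and coordinatewise amalgamation of 1-cells and of applicative transformations (where, as in the paper, assembling a family of realizers $(U_i)_i$ silently uses choice over $I$). The only differences are presentational: you verify explicitly that $\pi_i^\ast(A,\phi)=(A_i,\phi_i)$ and phrase the universal property as an isomorphism of hom-categories, which the paper leaves implicit.
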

\begin{proof}
Suppose that we have a collection $(A_i,\phi_i)$ indexed by a set $I$, where $(A_i,\phi_i)$ is a PCA over $\mathcal{C}_i$. Consider the product category $\mathcal{C}=\prod_{i\in I}\mathcal{C}_i$, which is also a locally small regular category in which regular formulas are interpreted coordinatewise. The object $A = (A_i)_{i\in I}$ is a PAS over $\mathcal{C}$ in the obvious way, and we can define a combinatorially complete filter $\phi$ on $A$ by
\[\phi = \{U=(U_i)_{i\in I}\mid \forall i\in I\s (U_i\in\phi_i)\}.\]
For each $i\in I$, we have the projection map $p_i\colon \mathcal{C}\to\mathcal{C}_i$, which satisfies $p_i(A) = A_i$ and $p_i(U) = U_i$ for every subobject $U$ of $A$. This implies that $(p_i,\delta_{A_i})$ is an applicative morphism $(A,\phi)\to (A_i,\phi_i)$.

First, we show that $(A,\phi)$ is the 1-product of the $(A_i,\phi_i)$. Suppose that $(B,\psi)$ is a PCA over $\mathcal{D}$ and that, for each $i\in I$, we have an applicative morphism $(q_i,f_i)\colon (B,\psi)\to (A_i,\phi_i)$. The $q_i$ have a unique amalgamation $q\colon \mathcal{D}\to \mathcal{C}$ such that $p_i\circ q = q_i$ for all $i\in I$. Moreover, there exists a unique relation $f\subseteq q(B)\times A$ such that $p_i(f)\subseteq q_i(B)\times A_i$ is equal to $f_i$, and it is easily seen that $(q,f)$ is in fact an applicative morphism $(B,\psi)\to (A,\phi)$.

Now suppose we have two applicative morpisms $(q,f),(r,g)\colon (B,\psi)\to(A,\phi)$, and for each $i$, an applicative transformation $\mu_i\colon (p_i,\delta_{A_i})\circ(q,f)\To (p_i,\delta_{A_i})\circ(r,g)$. Write $q_i = p_i\circ q$ and $f_i = p_i(f)$, so that $(p_i,\delta_{A_i})\circ(q,f) = (q_i,f_i)$, and similarly for $(r,g)$. Then the fact that $\mu_i$ is an applicative transformation tells us that $f_i\leq g_i\circ\overline{\mu_{i,A_i}}$. The $\mu_i$ have a unique amalgamation $\mu\colon q\To r$ such that $p_i\mu = \mu_i$ for all $i$. If $U_i\in\phi_i$ realizes this inequality, then $(U_i)_{i\in I}\in\phi$ realizes $f\leq g\circ\overline{\mu_A}$, so $\mu$ is an applicative transformation $(q,f)\To(r,g)$.
\end{proof}
\begin{rem}\label{remarks_product}
\begin{itemize}
\item[(i)]	Observe that the proof above uses the Axiom of Choice on the index set $I$.
\item[(ii)]	If all the $(A_i,\phi_i)$ are absolute PCAs, then so is $\prod_{i\in I} (A_i,\phi_i)$. \ruitje
\end{itemize}
\end{rem}

The fibers $\mathsf{PCA}_\mathcal{C}$ all have finite pseudocoproducts. We will show that these are preserved by reindexing, but first we need the following auxilliary result.
\begin{lem}\label{product_generate}
Let $p\colon \mathcal{C}\to\mathcal{D}$ be regular, let $A$ and $B$ be PASs over $\mathcal{C}$, and suppose that $G\subseteq\mathcal{P}^\ast A$ and $H\subseteq \mathcal{P}^\ast B$ are combinatorially complete. Then $\langle\langle G\rangle\times \langle H\rangle\rangle = \langle G\times H\rangle$ as filters on the PAS $A\times B$.
\end{lem}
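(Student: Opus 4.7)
The plan is to prove the two inclusions separately. The inclusion $\langle G\times H\rangle\subseteq\langle\langle G\rangle\times\langle H\rangle\rangle$ is immediate from $G\times H\subseteq\langle G\rangle\times\langle H\rangle$ together with the monotonicity of filter generation. For the reverse direction, since $\langle\langle G\rangle\times\langle H\rangle\rangle$ is by definition the smallest filter containing $\langle G\rangle\times\langle H\rangle$, it suffices to show that the filter $\langle G\times H\rangle$ already contains every $U\times V$ with $U\in\langle G\rangle$ and $V\in\langle H\rangle$.

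So fix such $U$ and $V$. Unpacking the hypotheses yields terms $t(x_0,\ldots,x_n)$ and $s(y_0,\ldots,y_m)$ together with tuples $U_0,\ldots,U_n\in G$ and $V_0,\ldots,V_m\in H$ such that $t(\vec{U})\denotes$ with $t(\vec{U})\subseteq U$, and $s(\vec{V})\denotes$ with $s(\vec{V})\subseteq V$. The essential difficulty is that $t$ and $s$ may have different arities and live on different sides of the product, whereas a single term evaluated in $A\times B$ on a tuple of products of subobjects computes the \emph{same} expression on both coordinates---application in $A\times B$ being coordinatewise on such products. To reconcile this, I would use combinatorial completeness of $G$ and of $H$ to absorb the imbalance. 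Pick arbitrary $U^\ast\in G$ and $V^\ast\in H$ (both nonempty because combinatorially complete), and choose realizers $T\in G$ of $\lambda z_0\ldots z_{n+m}.\,t(z_0,\ldots,z_n)$ and $S\in H$ of $\lambda z_0\ldots z_{n+m}.\,s(z_{n+1},\ldots,z_{n+m+1})$, treating all remaining positions as dummies.

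The witnessing data is then the term $r(z_0,z_1,\ldots,z_{n+m+2}) = z_0 z_1\cdots z_{n+m+2}$ together with the tuple
\[\vec{W} = (T\times S,\ U_0\times V^\ast,\ldots,U_n\times V^\ast,\ U^\ast\times V_0,\ldots,U^\ast\times V_m) \in (G\times H)^{n+m+3}.\]
By coordinatewise application of products of subobjects in $A\times B$, the $A$-component of $r(\vec{W})$ is $T\cdot U_0\cdots U_n\cdot U^\ast\cdots U^\ast$, which by the realizer property of $T$ is defined and contained in $t(\vec{U})\subseteq U$; the $B$-component is analogously contained in $s(\vec{V})\subseteq V$, giving $r(\vec{W})\subseteq U\times V$ as required. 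The main obstacle is really just the bookkeeping---counting indices so that $T$ and $S$ absorb exactly the right number of dummy arguments, and verifying definedness from conditions (i) and (ii) of the realizer definition---rather than any deeper combinatorial subtlety; conceptually, the coordinatewise application of the product PAS is doing all the work.
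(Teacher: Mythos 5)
Your proof is correct, and it follows the same overall strategy as the paper --- the easy inclusion via monotonicity, and then exhibiting a single term with arguments in $G\times H$ whose value is contained in a given $U\times V$ --- but the padding mechanism is genuinely different. The paper keeps the term shape $z\cdot t(\vec{x})\cdot s(\vec{y})$ and pads the \emph{arguments}: it pairs each $W_i\in G$ with $\mathsf{I}\in H$ and each $T_j\in H$ with $\mathsf{I}\in G$, so that $t(\vec{W}')=t(\vec{W})\times\mathsf{I}$ and $s(\vec{T}')=\mathsf{I}\times s(\vec{T})$, and then selects the two coordinates with the single element $\mathsf{K}\times\overline{\mathsf{K}}\in G\times H$. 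You instead pad the \emph{terms} with dummy variables and invoke combinatorial completeness of $G$ and $H$ to realize the padded terms, filling the irrelevant coordinate slots with arbitrary $U^\ast\in G$, $V^\ast\in H$; this avoids naming any specific combinators ($\mathsf{K}$, $\overline{\mathsf{K}}$, $\mathsf{I}$) at the price of longer application strings. Both routes rest on the same two facts, which you correctly identify: combinatorial completeness supplies realizers for terms with non-occurring variables (exactly the mechanism that yields $\mathsf{K}$ and $\overline{\mathsf{K}}$ in the first place), and application of products of inhabited subobjects in $A\times B$ is computed coordinatewise, with condition (i) of the realizer definition guaranteeing definedness of all intermediate applications on both coordinates and condition (ii) giving the final containments $T\cdot U_0\cdots U_n\cdot U^\ast\cdots U^\ast\subseteq t(\vec{U})$ and $S\cdot V^\ast\cdots V^\ast\cdot V_0\cdots V_m\subseteq s(\vec{V})$. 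One bookkeeping slip to fix when writing it out: since $t$ has $n+1$ variables and $s$ has $m+1$, the padded terms should be $\lambda z_0\ldots z_{n+m+1}.\,t(z_0,\ldots,z_n)$ and $\lambda z_0\ldots z_{n+m+1}.\,s(z_{n+1},\ldots,z_{n+m+1})$; as written, your abstraction for $T$ stops one variable short and your body for $S$ mentions a variable outside its binder, although your tuple $\vec{W}$ and the term $r$ do have the matching arities, so the argument itself is unaffected.
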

\begin{proof}
We clearly have $G\times H\subseteq \langle G\rangle\times \langle H\rangle\subseteq \langle\langle G\rangle\times \langle H\rangle\rangle$ and since the right-hand side is a filter, this yields $\langle G\times H\rangle\subseteq \langle\langle G\rangle\times \langle H\rangle\rangle$.

For the converse inclusion, let $U\times V\in \langle G\rangle\times \langle H\rangle$. Then there exist terms $t(\vec{x})$ and $s(\vec{y})$, and elements $\vec{W}\in G$ and $\vec{T}\in H$ such that $t(\vec{W})\denotes$, $t(\vec{W})\subseteq U$, $s(\vec{T})\denotes$ and $s(\vec{T})\subseteq V$. By our assumption, we can choose combinators $\mathsf{K},\mathsf{I}\in G$ and $\overline{\mathsf{K}},\mathsf{I}\in H$. For every component $W_i$ from $\vec{W}$, define $W'_i = W_i\times\mathsf{I}\in G\times H$. Moreover, it is easily see that $t(\vec{W}')$ is defined and equal to $t(\vec{W})\times \mathsf{I}$. Similarly, set $T_j'= \mathsf{I}\times T_j\in G\times H$, so that $s(\vec{T}')$ is defined and equal to $\mathsf{I}\times s(\vec{T})$. Now define the term $r(\vec{x},\vec{y},z)$ as $z\cdot t(\vec{x})\cdot s(\vec{y})$. Then $r(\vec{W}',\vec{T}',\mathsf{K}\times\overline{\mathsf{K}})$ is defined and equal to
\begin{align*}
(\mathsf{K}\times\overline{\mathsf{K}})\cdot t(\vec{W}')\cdot s(\vec{T}') &= (\mathsf{K}\times\overline{\mathsf{K}})\cdot (t(\vec{W})\times \mathsf{I})\cdot (\mathsf{I}\times s(\vec{T}))\\
&= (\mathsf{K}\cdot t(\vec{W})\cdot\mathsf{I}) \times(\overline{\mathsf{K}}\cdot\mathsf{I}\cdot s(\vec{T}))\\
&= t(\vec{W})\times s(\vec{T})\\
&\subseteq U\times V.
\end{align*}
Since $\vec{W}',\vec{T}',\mathsf{K}\times\overline{\mathsf{K}}\in G\times H$, we can conclude that $U\times V\in \langle G\times H\rangle$, so we have shown that $\langle G\rangle\times \langle H\rangle\subseteq \langle G\times H\rangle$. Since the right-hand side is a filter, this yields $\langle\langle G\rangle\times \langle H\rangle\rangle\subseteq \langle G\times H\rangle$, as desired.
\end{proof}
\begin{prop}
If $p\colon\mathcal{C}\to\mathcal{D}$ is regular, then $p^\ast \colon \mathsf{PCA}_\mathcal{C}\to\mathsf{PCA}_\mathcal{D}$ preserves finite pseudocoproducts.
\end{prop}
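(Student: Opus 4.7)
The plan is to verify the two ingredients of finite pseudocoproducts separately: the pseudoinitial object and binary pseudocoproducts. For the pseudoinitial object $(1,\{1\})$ of $\mathsf{PCA}_\mathcal{C}$, since $p$ is regular it preserves the terminal object, so $p^\ast(1,\{1\}) = (p(1),\langle p(\{1\})\rangle) = (1,\{1\})$, which is the pseudoinitial object of $\mathsf{PCA}_\mathcal{D}$.

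For binary pseudocoproducts, given PCAs $(A,\phi)$ and $(B,\psi)$ over $\mathcal{C}$, I need to show that the image under $p^\ast$ of the pseudocoproduct diagram
\[(A,\phi)\xrightarrow{\kappa_0}(A\times B,\langle\phi\times\psi\rangle)\xleftarrow{\kappa_1}(B,\psi)\]
is a pseudocoproduct diagram in $\mathsf{PCA}_\mathcal{D}$. The strategy is to show that the image is literally (up to equality of filters) the pseudocoproduct of $p^\ast(A,\phi)$ and $p^\ast(B,\psi)$ as constructed in the earlier proposition, with the image coprojections $p^\ast\kappa_0$ and $p^\ast\kappa_1$ coinciding with the canonical ones.

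For the underlying PAS, since $p$ preserves finite products, $p(A\times B) = p(A)\times p(B)$ as PASs (the coordinatewise application structure is preserved because $p$ preserves the domain subobject and the application arrow). For the filter, I compute using \cref{HF_on_objects}(ii):
\[\langle p(\langle\phi\times\psi\rangle)\rangle = \langle p(\phi\times\psi)\rangle = \langle p(\phi)\times p(\psi)\rangle,\]
where the last equality holds because $p$ preserves products, so $p(U\times V) = p(U)\times p(V)$. Now applying \cref{product_generate} with $G = p(\phi)$ and $H = p(\psi)$ yields
\[\langle p(\phi)\times p(\psi)\rangle = \langle\langle p(\phi)\rangle\times\langle p(\psi)\rangle\rangle,\]
and the right-hand side is exactly the filter on $p(A)\times p(B)$ defining the pseudocoproduct of $p^\ast(A,\phi)$ and $p^\ast(B,\psi)$ in $\mathsf{PCA}_\mathcal{D}$. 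This is the central computation, and it relies crucially on \cref{product_generate} (which appears to have been set up precisely for this purpose). Finally, since $\kappa_0$ is the relation cut out by the regular formula $a=a'$ on $A\times(A\times B)$, and $p$ preserves the interpretation of regular formulas, $p(\kappa_0)$ is the same relation on $p(A)\times(p(A)\times p(B))$, matching the canonical $\kappa_0$ in the target; similarly for $\kappa_1$.

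Once the images are literally the pseudocoproducts in $\mathsf{PCA}_\mathcal{D}$, the universal property is automatically inherited, so pseudocoproducts are preserved. I do not anticipate a substantial obstacle: the only delicate point is the filter computation, but \cref{product_generate} handles it cleanly, and everything else follows from the fact that regular functors preserve products and the interpretation of regular logic.
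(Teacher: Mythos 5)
Your proposal is correct and follows essentially the same route as the paper: the heart of the matter is the filter identity $\langle p(\langle\phi\times\psi\rangle)\rangle = \langle\langle p(\phi)\rangle\times\langle p(\psi)\rangle\rangle$, obtained exactly as you do by combining \cref{HF_on_objects}(ii) with \cref{product_generate}. The extra checks you include (the pseudoinitial object and the coprojections) are the routine parts the paper leaves implicit, so there is nothing to add.
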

\begin{proof}
The only difficult part is showing that, for PCAs $(A,\phi)$ and $(B,\psi)$ over $\mathcal{C}$, the filters on $p^\ast((A,\phi)+(B,\psi))$ and $p^\ast(A,\phi)+p^\ast(B,\psi)$ coincide (modulo the isomorphism $p(A\times B)\simeq p(A)\times p(B)$). It is clear that $p(\phi\times\psi) = p(\phi)\times p(\psi)$, so by combining \cref{HF_on_objects}(ii) and \cref{product_generate}, we see that
\[\langle p(\langle \phi\times \psi\rangle)\rangle = \langle p(\phi\times\psi)\rangle = \langle p(\phi)\times p(\psi)\rangle = \langle\langle p(\phi)\rangle\times \langle p(\psi)\rangle\rangle,\]
as desired.
\end{proof}

The category $\mathsf{REG}$ also has pseudocoproducts: if $\mathcal{C}$ and $\mathcal{D}$ are regular, then their pseudocoproduct is $\mathcal{C}\times \mathcal{D}$. The inclusions $\mathcal{C}\stackrel{\iota_0}{\longrightarrow}\mathcal{C}\times\mathcal{D}\stackrel{\iota_1}{\longleftarrow}$ are given by $\iota_0(X) = (X,1)$ and $\iota_1(Y) = (1,Y)$. If the forgetful functor $\mathsf{PCA}\to\mathsf{REG}$ were a 2-opfibration, then this would mean we also have pseudocoproducts in $\mathsf{PCA}$. However, we have not shown this, since we do not in general have a natural transformation $\mu^\ast\colon p^\ast\To q^\ast$ when $\mu\colon p\To q$. This means that, while it does have 2-cocartesian lifts of 1-cells, the forgetful functor $\mathsf{PCA}\to\mathsf{REG}$ lacks cartesian lifts of 2-cells. We do, on the other hand, have cartesian lifts of \emph{invertible} 2-cells by \cref{transporting_mu}(iii), which means we can get a partial result.

For a 2-category $\mathcal{B}$, let $\mathcal{B}_\text{iso}$ be the 2-category which has the same 0- and 1-cells as $\mathcal{B}$, but whose 2-cells are only the invertible 2-cells of $\mathcal{B}$. Then we also have a forgetful functor $\mathsf{PCA}_\text{iso}\to\mathsf{REG}_\text{iso}$, which is a 2-opfibration. We know that $\mathsf{REG}_\text{iso}$ still has finite pseudocoproducts, as do all the fibers $(\mathsf{PCA}_\mathcal{C})_\text{iso}$, and reindexing still preserves finite pseudocoproducts. From this, we can conclude that $\mathsf{PCA}_\text{iso}$ also has finite pseudocoproducts. In the binary case, they are computed as follows. If $(A,\phi)$ and $(B,\psi)$ are PCAs over $\mathcal{C}$ and $\mathcal{D}$ respectively, then their pseudocoproduct in $\mathsf{PCA}_\text{iso}$ is formed by taking the pseudocoproduct $\iota_0^\ast(A,\phi)+\iota_1^\ast(B,\psi)$ in $\mathcal{C}\times\mathcal{D}$. Its onderlying object is $(A,1)\times (1,B)\simeq (A,B)$, and its filter is generated by
\[\{(U,V)\subseteq (A,B)\mid U\in\phi, V\in\psi\}.\]
But this set is already a filter; and moreover, it is the filter that yields the 2-product of $(A,\phi)$ and $(B,\psi)$ in $\mathsf{PCA}$, and hence also in $\mathsf{PCA}_\text{iso}$. So we see that the pseudocoproduct of $(A,\phi)$ and $(B,\psi)$ in in $\mathsf{PCA}_\text{iso}$ coincides with their product. It is also easy to see that the pseudoinitial object of $\mathsf{PCA}_\text{iso}$ coincides with the terminal object, so $\mathsf{PCA}_\text{iso}$ has a pseudozero object. Moreover, one readily calculates that the composition
\[(A,\phi)\to (A,\phi)\times (B,\psi) \to (A,\phi)\]
is the identity on $(A,\phi)$, whereas
\[(A,\phi)\to (A,\phi)\times (B,\psi) \to (B,\psi)\]
is a zero morphism. We can conclude:
\begin{prop}
The 2-category $\mathsf{PCA}_\textup{iso}$ has finite pseudobiproducts.
\end{prop}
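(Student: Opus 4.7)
The plan is to assemble the pieces already developed in the preceding discussion; the result follows essentially as a corollary once a few identifications are pinned down. First, I would confirm the pseudozero object: the PCA $1$ over $\mathbf{1}$ is both pseudoinitial and pseudoterminal in $\mathsf{PCA}$, and the isomorphisms witnessing essential uniqueness are themselves invertible 2-cells, so they survive the passage to $\mathsf{PCA}_\textup{iso}$.

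For finite pseudoproducts, I would observe that the small 2-products constructed above are in particular pseudoproducts, whose defining isomorphism of hom-categories restricts to an equivalence of the groupoids obtained by discarding non-invertible 2-cells; hence $\mathsf{PCA}_\textup{iso}$ inherits finite pseudoproducts directly from $\mathsf{PCA}$.

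For finite pseudocoproducts I would invoke the 2-opfibration argument outlined in the text. The forgetful functor $\mathsf{PCA}_\textup{iso} \to \mathsf{REG}_\textup{iso}$ admits cocartesian lifts of 1-cells via $(-)^\ast$ and cartesian lifts of the only available 2-cells (the invertible ones) by \cref{transporting_mu}(iii). Combining this with the fact that $\mathsf{REG}_\textup{iso}$ has binary pseudocoproducts, that each fiber $(\mathsf{PCA}_\mathcal{C})_\textup{iso}$ has binary pseudocoproducts, and that reindexing preserves them, the standard 2-opfibration construction yields a pseudocoproduct in $\mathsf{PCA}_\textup{iso}$ computed as $\iota_0^\ast(A,\phi) + \iota_1^\ast(B,\psi)$ inside $\mathsf{PCA}_{\mathcal{C}\times\mathcal{D}}$.

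Finally, I would identify this pseudocoproduct with the corresponding pseudoproduct and verify the biproduct equations. The explicit computation in the preceding discussion shows that both constructions have underlying PAS isomorphic to $(A,B)$ and the same filter, the set $\{(U,V)\mid U\in\phi,\ V\in\psi\}$ already being closed under application. The biproduct identities $\pi_i\circ\kappa_i \simeq \id$ and $\pi_i\circ\kappa_j \simeq 0$ for $i\neq j$ reduce to short relation-theoretic computations with the canonical injections and projections, using that a zero morphism $(A,\phi)\to(B,\psi)$ is characterized by containing $A\times V$ for some $V\in\psi$; the off-diagonal composition produces exactly such a relation. The main pedantic obstacle is checking that all coherence isomorphisms arising in the 2-opfibration construction are indeed invertible 2-cells, but since they all come from $\mu^\ast$-type natural isomorphisms provided by \cref{transporting_mu}(iii), this is automatic.
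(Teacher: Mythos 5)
Your proof is correct and takes essentially the same route as the paper: pseudozero object, pseudocoproducts via the 2-opfibration $\mathsf{PCA}_\textup{iso}\to\mathsf{REG}_\textup{iso}$, identification of the binary pseudocoproduct with the 2-product through the common filter $\{(U,V)\mid U\in\phi,\ V\in\psi\}$ on $(A,B)$, and the two composite identities. The only cosmetic imprecision is that the characterization of zero morphisms by $A\times V\subseteq f$ was stated for morphisms within a single fiber $\mathsf{PCA}_\mathcal{C}$, whereas $\pi_1\circ\kappa_0$ lies over the constant functor at $1$; one should really check that this composite factors through the pseudozero object, which the explicit computation immediately gives.
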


\section{The Functor $\asm$}\label{section_asm}

In the previous section, we defined applicative morphisms and transformations. At this point, it is not so clear how well-behaved these notions are. In this section, we investigate the compatibility between these notions and the category of assemblies defined in \cref{first_section}. More precisely, we extend $\asm$ to a 2-functor $\mathsf{PCA}\to\mathsf{Cat}$ and we characterize its image on the 1- and 2-cells. The proofs in this section are adaptations of the proofs for the corresponding results in the case of classical PCAs. In this sense, the results in this section are not innovative. Neventheless, we include them because they provide evidence that our notions of applicative morphism and transformation are the `right' ones.

We start by extending the assignment $\asm$.
\begin{defi}
\begin{itemize}
\item[(i)]
Let $(p,f)\colon (A,\phi)\to (B,\psi)$ be an applicative morphism. We define $\asm(p,f)$ as the functor $F\colon \asm(A,\phi)\to \asm(B,\psi)$ determined by:
\begin{itemize}
\item	$|FX| = p(|X|)$;
\item	$E_{FX} = f\circ p(E_X) = \{(x,b)\in p(|X|)\times B\mid \exists a: p(A)\s (p(E_X)(x,a)\wedge f(a,b))\}$;
\item	$Fg = p(g)$.
\end{itemize}
\item[(ii)]
If $\mu\colon (p,f)\To(q,g)$ is an applicative transformation, then define $\asm(\mu)\colon \asm(p,f)\To\asm(q,g)$ by $\asm(\mu)_X = \mu_{|X|}$ for $X\in\asm(A,\phi)$. \ruitje
\end{itemize}
\end{defi}
\begin{prop}
The assignment $\asm$ is a well-defined 2-functor $\mathsf{PCA}\to\mathsf{Cat}$.
\end{prop}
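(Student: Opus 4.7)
The plan is to verify three things in sequence: (a) that $\asm(p,f)\colon\asm(A,\phi)\to\asm(B,\psi)$ is a well-defined functor; (b) that $\asm(\mu)$ is a natural transformation between the functors $\asm(p,f)$ and $\asm(q,g)$; and (c) that these assignments strictly preserve identities and both kinds of composition of 1- and 2-cells.

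For (a), first observe that $FX$ is an assembly because $E_{FX}=f\circ p(E_X)$ is a composition of two total relations: $p(E_X)$ is total since the regular functor $p$ preserves totality, and $f$ is total by requirement (a) in \cref{applicative_morphism}. To show $Fh=p(h)$ is a morphism $FX\to FY$ whenever $h\colon X\to Y$ is tracked by some $U\in\phi$, I would construct a tracker $W\in\psi$ as follows. Since $U\in\phi$, one has $p(U)\in p(\phi)\subseteq\langle p(\phi)\rangle$, so requirement (c) for the applicative morphism $f\colon p^\ast(A,\phi)\to(B,\psi)$ yields $f(p(U))\in\psi$. Picking a tracker $V\in\psi$ for $f$ and using combinatorial completeness of $\psi$, one finds a $W\in\psi$ realizing $\lambda y.\,V\cdot f(p(U))\cdot y$ in the sense of \cref{convention_parameters}. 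An internal argument in $\mathcal{D}$, combining the tracker condition for $h$ transported via $p$ with the tracker condition for $f$ (applied to the witnesses $f(r,r')$ with $r\in p(U)$, $r'\in f(p(U))$, and $f(a,b)$ with $ra\denotes$), then verifies that $W$ tracks $Fh$. Functoriality $F\id=\id$ and $F(hh')=F(h)F(h')$ is immediate from functoriality of $p$.

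For (b), the component $\asm(\mu)_X=\mu_{|X|}$ is a morphism $FX\to GX$ tracked by the very element $U\in\psi$ that tracks $\mu$. Indeed, given $E_{FX}(x,b)$ witnessed by some $a\in p(A)$ with $p(E_X)(x,a)$ and $f(a,b)$, the tracker condition for $\mu$ applied to $f(a,b)$ and $U(r)$ gives $rb\denotes$ and $g(\mu_A(a),rb)$, while the naturality of $\mu$ at the inclusion $E_X\hookrightarrow|X|\times A$ yields $q(E_X)(\mu_{|X|}(x),\mu_A(a))$; together these deliver $E_{GX}(\mu_{|X|}(x),rb)$. Naturality of $\asm(\mu)$ in $X$ is just the naturality of $\mu$ applied to the underlying arrow $h\colon|X|\to|Y|$ of any morphism $h\colon X\to Y$ of assemblies.

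Step (c) is a routine verification. Preservation of identity 1-cells is immediate. For composition, unwinding $(q,g)\circ(p,f)=(qp,\,g\circ q(f))$ and using that $q$ preserves composition of relations shows that both $\asm(q,g)\circ\asm(p,f)$ and $\asm((qp,\,g\circ q(f)))$ send an assembly $X$ to the assembly with underlying object $(qp)(|X|)$ and realizability relation $g\circ q(f)\circ(qp)(E_X)$. Identity 2-cells map to identity natural transformations, and both vertical and horizontal composition of applicative transformations translate componentwise, after evaluating at $|X|$, to the corresponding compositions of natural transformations in $\mathsf{Cat}$. I expect the main technical obstacle to be the tracker construction in (a); but once the filter-theoretic observation $f(p(U))\in\psi$ for every $U\in\phi$ is secured, the rest is a straightforward adaptation of the familiar argument for classical PCAs.
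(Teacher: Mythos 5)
Your proposal is correct and follows essentially the same route as the paper: the same tracker $\lambda x.\,V\cdot f(p(U))\cdot x$ (using $f(p(U))\in\psi$ via requirement (c)) for showing $p(h)$ is a morphism, the same use of naturality of $\mu$ at the inclusion $E_X\subseteq|X|\times A$ combined with the tracker of the applicative transformation to show $\mu_{|X|}$ is a morphism, and the same regularity-of-$q$ computation $g\circ q(f\circ p(E_X))=g\circ q(f)\circ q(p(E_X))$ for preservation of composition. The only addition is your explicit check that $E_{FX}$ is total, which the paper leaves implicit.
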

\begin{proof}
Let $(p,f)\colon (A,\phi)\to (B,\psi)$ be an applicative morphism, and write $F$ for $\asm(p,f)$. We first show that $F$ is indeed a functor, for which we only need to check that $p(k)$ is actually a morphism $FX\to FY$ when $k\colon X\to Y$ is a morphism in $\asm(A,\phi)$. If $U\in\phi$ tracks $k$ and $V\in\psi$ tracks $f$, then any realizer of $\lambda x. V\cdot f(p(U))\cdot x$ tracks $p(k)\colon FX\to FY$, which shows that $F$ is indeed a functor.

Now suppose that $(q,g)\colon (B,\psi)\to(C,\chi)$ is another applicative morphism. Since $q$ is a regular functor, we have that
\[g\circ q(f\circ p(E_X)) = g\circ q(f) \circ q(p(E_X)),\]
for every assembly $X$ over $(A,\phi)$. From this, it easily follows that $\asm(q,g)\circ \asm(p,f) = \asm((q,g)\circ(p,f))$, so $\asm$ is a 1-functor.

Now suppose that $(p,f),(q,g)\colon (A,\phi)\to (B,\psi)$ and $\mu\colon (p,f)\To (q,f)$. The naturality of $\asm(\mu)$ is obvious, but we need to check that $\asm(\mu)_X = \mu_{|X|}$ is actually a morphism $\asm(p,f)(X)\to \asm(q,g)(X)$. Applying the naturality of $\mu$ to the inclusion $E_X\subseteq |X|\times A$ yields that
\[p(E_X)(x,a) \models_{x:p(|X|); a:p(A)} q(E_X)(\mu_{|X|}(x),\mu_A(a))\]
is valid in the underlying category of $(B,\psi)$. From this, it easily follows that a tracker of the applicative transformation $\mu$ also tracks $\mu_{|X|}$, so the latter is indeed a morphism. It is clear from the definitions that $\asm$ respects the vertical and horizontal composition of 2-cells.
\end{proof}
Since $p\circ\Gamma = \Gamma\circ\asm(p,f)$, we immediately get the following corollary.
\begin{cor}
Let $(A,\phi)$ and $(B,\psi)$ be PCAs over $\mathcal{C}$ and $\mathcal{D}$ respectively. If $(A,\phi)$ and $(B,\psi)$ are isomorphic \textup{(}equivalent\textup{)}, then the functors $\Gamma\colon\asm(A,\phi)\to\mathcal{C}$ and $\Gamma\colon\asm(B,\psi)\to\mathcal{D}$ are also isomorphic \textup{(}equivalent\textup{)}.
\end{cor}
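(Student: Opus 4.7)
The plan is to apply the 2-functoriality of $\asm$ to the given isomorphism resp.\ equivalence, and to combine it with the strict commutativity $p\circ\Gamma = \Gamma\circ\asm(p,f)$ noted just before the corollary. The statement is essentially a formal consequence of these two ingredients; the only mild subtlety is being precise about what ``isomorphic/equivalent'' means for functors with different sources and targets, namely, as objects of the arrow (2-)category of $\mathsf{Cat}$.

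First I would handle the isomorphism case. Suppose $(p,f)\colon (A,\phi)\to (B,\psi)$ and $(q,g)\colon (B,\psi)\to (A,\phi)$ are mutually inverse arrows of $\mathsf{PCA}$. Unfolding the composition in $\mathsf{PCA}$, the equations $(q,g)\circ(p,f)=\id_{(A,\phi)}$ and $(p,f)\circ(q,g)=\id_{(B,\psi)}$ force $q\circ p=\id_\mathcal{C}$ and $p\circ q=\id_\mathcal{D}$ on the underlying regular functors. Applying the 2-functor $\asm$ yields mutually inverse functors $\asm(p,f)$ and $\asm(q,g)$ between $\asm(A,\phi)$ and $\asm(B,\psi)$. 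The strict identity $p\circ\Gamma=\Gamma\circ\asm(p,f)$ (and its analogue for $(q,g)$) then produces a commuting square whose two vertical legs are isomorphisms, which is precisely an isomorphism between the arrows $\Gamma\colon\asm(A,\phi)\to\mathcal{C}$ and $\Gamma\colon\asm(B,\psi)\to\mathcal{D}$.

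The equivalence case proceeds in the same way. An equivalence in $\mathsf{PCA}$ supplies applicative morphisms $(p,f)$ and $(q,g)$ as above together with invertible applicative transformations $\mu\colon (q,g)\circ(p,f)\cong\id_{(A,\phi)}$ and $\nu\colon (p,f)\circ(q,g)\cong\id_{(B,\psi)}$. By \cref{PCA}, the underlying 2-cells are natural transformations $q\circ p\To\id_\mathcal{C}$ and $p\circ q\To\id_\mathcal{D}$, and these remain invertible since $\mu,\nu$ are; hence $p,q$ constitute an equivalence in $\mathsf{REG}$. Since $\asm$ is a 2-functor, $\asm(\mu)$ and $\asm(\nu)$ are invertible, so $\asm(p,f)$ and $\asm(q,g)$ form an equivalence between $\asm(A,\phi)$ and $\asm(B,\psi)$. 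Combined once more with the strict commutativity $p\circ\Gamma=\Gamma\circ\asm(p,f)$, this exhibits the two $\Gamma$ functors as equivalent objects of the arrow 2-category of $\mathsf{Cat}$. I do not expect any real obstacle here; the whole argument is an unpacking of what ``isomorphism/equivalence'' means in $\mathsf{PCA}$, together with 2-functoriality of $\asm$, and both cases rely on the fact that the $\Gamma$-square commutes \emph{strictly}, so no extra coherence 2-cell has to be produced.
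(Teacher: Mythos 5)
Your argument is correct and is exactly the paper's intended proof: the paper derives the corollary in one line from the strict identity $p\circ\Gamma=\Gamma\circ\asm(p,f)$ together with the 2-functoriality of $\asm$, and your write-up simply fills in the routine unpacking of isomorphism/equivalence in $\mathsf{PCA}$.
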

Combining this with \cref{absoluteness}, we find that absoluteness is stable under equivalence of PCAs.

The category of assemblies for a classical (relative) PCA is always a quasitopos. This cannot be guaranteed in our setting, since not all the relevant constructions can be carried out in the base category $\mathcal{C}$. We do still have that $\asm(A,\phi)$ is itself a regular category. In the proof of the next proposition, we describe the regular structure of $\asm(A,\phi)$, which we will need later in this section.

\begin{prop}\label{asm_reg}
If $(A,\phi)$ is a PCA over $\mathcal{C}$, then $\asm(A,\phi)$ is a regular category, and the functors $\Gamma\colon \asm(A,\phi)\to\mathcal{C}$ and $\nabla\colon\mathcal{C}\to\asm(A,\phi)$ are regular.
\end{prop}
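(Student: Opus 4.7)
The plan is to explicitly construct finite limits and the regular epi/mono factorization in $\asm(A,\phi)$, following the pattern of the classical case but using the combinators available in $\phi$.

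First, I handle finite limits. The terminal object is $\nabla 1$ by \cref{GammaNabla}. For binary products, set $|X\times Y|=|X|\times|Y|$ and define $E_{X\times Y}((x,y),c)$ to hold iff there exist $a,b:A$ and $p:A$ with $\mathsf{P}(p)$, $E_X(x,a)$, $E_Y(y,b)$ and $c=pab$. This is a total relation by combinatorial completeness; the projections are tracked by $\mathsf{P}_0$ and $\mathsf{P}_1$, and any cone is paired by a realizer of $\lambda x.\mathsf{P}(ux)(vx)$ where $u,v$ track the cone legs. For equalizers of $f,g:X\to Y$, take the equalizer $E\hookrightarrow|X|$ in $\mathcal{C}$ and restrict $E_X$ to it; the inclusion into $X$ is tracked by $\mathsf{I}$, and its universal property is immediate. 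Pullbacks then exist.

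Second, I establish the image factorization. Given $f:X\to Y$ tracked by $U\in\phi$, factor $|f|$ as $|X|\xrightarrow{\,e\,} I \xrightarrow{\,m\,}|Y|$ in $\mathcal{C}$, and endow $I$ with the push-forward realizability $E_I(i,a)\equiv \exists x:|X|\s(e(x)=i\wedge E_X(x,a))$, which is total since $e$ is regular epi and $X$ is an assembly. Then $e:X\to I$ is tracked by $\mathsf{I}$ and $m:I\to Y$ is tracked by $U$. To see $e$ is regular epi, note that any $g:X\to Z$ coequalizing the kernel pair of $e$ descends on underlying objects to a unique $h:|I|\to|Z|$ in $\mathcal{C}$, and any tracker of $g$ tracks $h$ because each realizer of $i\in I$ is a realizer of some $x\in|X|$ with $e(x)=i$. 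Monos in $\asm(A,\phi)$ are precisely those with mono underlying arrow (one direction uses that morphisms are determined by underlying arrows, the other uses that $\Gamma$ preserves limits by the explicit construction above), so $m$ is mono.

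Third, pullback stability. A morphism $e:X\to Y$ is regular epi iff it is isomorphic to its image factorization, which amounts to $|e|$ being regular epi in $\mathcal{C}$ together with a $V\in\phi$ such that $E_Y(y,a)\models_{y,a}\exists x:|X|\s(e(x)=y\wedge E_X(x,Va))$. Given any $g:Z\to Y$ with tracker $W$, form the pullback $P$ in $\asm(A,\phi)$; its underlying object $|P|$ is the pullback of $|e|$ along $|g|$ in $\mathcal{C}$, and $|p|:|P|\to|Z|$ is regular epi in $\mathcal{C}$ by regularity of $\mathcal{C}$. A realizer of $\lambda x.\mathsf{P}x(V(Wx))$ takes any realizer of $z\in Z$ to a realizer of $(z,x)\in|P|$ for some preimage $x$, showing that the required second condition for $p$ to be regular epi holds.

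Finally, $\Gamma$ preserves the chosen finite limits and image factorizations by construction, so it is regular. The functor $\nabla$ is a right adjoint by \cref{GammaNabla}, hence preserves limits; it preserves regular epis because $|\nabla e|=e$ and the maximal realizability on $\nabla Y$ and $\nabla Y'$ makes the second condition automatic (take $V=\mathsf{I}$). The main obstacle will be verifying pullback stability cleanly, where the delicate point is characterising regular epis in $\asm(A,\phi)$ so that the extraction of realizers via $V$ and $W$ can be combined using $\mathsf{P}$ to produce a realizer in the pullback assembly.
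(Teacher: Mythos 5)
Your proposal is correct and follows essentially the same route as the paper: explicit finite limits built with the pairing combinators, push-forward realizability for the regular epi--mono factorization, a witness-in-$\phi$ characterization of regular epis, pullback stability by combining the witness and a tracker via $\mathsf{P}$, and $\mathsf{I}$ witnessing that $\nabla$ preserves regular epis. The only cosmetic difference is that you verify the epi part of the factorization is regular by checking the coequalizer universal property directly, where the paper phrases regular epis as maps for which $\id_{|Y|}$ is a morphism into the explicitly constructed quotient assembly; these are equivalent.
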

\begin{proof}
As we have already remarked, the constant object $\nabla 1$ is a terminal object in $\asm(A,\phi)$. If $X$ and $Y$ are assemblies, then we can form their product by taking $|X\times Y| = |X|\times |Y|$ and
\[E_{X\times Y} = \{(x,y,a)\in|X|\times|Y|\times A\mid \exists b,c,d: A\s (\mathsf{P}(b)\wedge E_X(x,c)\wedge E_Y(y,d)\wedge bcd=a)\}.\]
The projections of $|X\times Y|$ onto $|X|$ and $|Y|$ are morphisms of assemblies, since they are tracked by $\mathsf{P}_0$ and $\mathsf{P}_1$ respectively. If $f\colon Z\to X$ and $g\colon Z\to Y$ are morphisms, then there exists a unique mediating arrow $\langle f,g\rangle\colon |Z|\to |X|\times |Y|$ in $\mathcal{C}$. This is also a morphism $Z\to X\times Y$ since a realizer of $\lambda x.\mathsf{P}(Ux)(Vx)$ tracks $\langle f,g\rangle$, where $U,V\in\phi$ track $f$ and $g$ respectively. Observe that this construction of the product works for any choice of $\mathsf{P}\in\phi$ realizing $\lambda xyz. zxy$, a fact we will use later in this section.

If $f,g\colon X\to Y$ is a parallel pair, then first form the equalizer $m\colon |Z|\mono |X|$ of $f$ and $g$ in $\mathcal{C}$. Now define
\[E_Z = (m\times\id_A)^{-1}(E_X) = \{(z,a)\in |Z|\times A\mid E_X(m(z),a)\}.\]
Then $m$ is a morphism $Z\to X$, since it is tracked by $\mathsf{I}$. Moreover, if $h\colon W\to X$ satisfies $fh=gh$, then we get a unique mediating arrow $k\colon |W|\to |Z|$, which is a morphism since every tracker of $h$ is also a tracker of $k$.

It follows that the pullback of a cospan $X\stackrel{f}{\longrightarrow} Z\stackrel{g}{\longleftarrow} Y$ is formed as follows. First take the pullback
\[\begin{tikzcd}
{|W|} \arrow[r, "p"] \arrow[d, "q"'] \arrow[rd,phantom,"\text{\Large{$\lrcorner$}}", pos=0] & {|X|} \arrow[d, "f"] \\
{|Y|} \arrow[r, "g"']                & {|Z|}               
\end{tikzcd}\]
in $\mathcal{C}$, and then equip $|W|$ with
\[E_W = \{(w,a)\in |W|\times A\mid \exists b,c,d:A\s (\mathsf{P}(b)\wedge E_X(p(w),c)\wedge E_Y(q(w),d)\wedge bcd=a)\}.\]
From the description of finite limits above, it follows that $\Gamma$ preserves all finite limits. In particular, $\Gamma$ preserves monos. Being faithful, $\Gamma$ also reflects monos.

Now suppose that $e\colon X\to Y$ is a regular epimorphism in $\asm(A,\phi)$. Since $\Gamma$ preserves finite limits and colimits, it follows that $e\colon |X|\to |Y|$ must be a regular epimorphism in $\mathcal{C}$. Suppose, conversely, that we have an arrow $e\colon X\to Y$ such that $e\colon |X|\to |Y|$ is a regular epimorphism. Then we can form the coequalizer $e'\colon X\to Y'$ of the kernel pair of $e$ by taking $|Y'| = |Y|$, $e'=e$ and 
\[E_{Y'} = \{(y,a)\in |Y|\times A\mid \exists x:|X|\s (e(x)=y\wedge E_X(x,a))\}.\]
It follows immediately that $e\colon X\to Y$ is a regular epimorphism if and only if $\id_{|Y|}$ is a morphism $Y\to Y'$. And this is to say that there exists a $U\in\phi$ satisfying
\[E_Y(y,a)\wedge U(r) \models_{y:|Y|; r,a:A} ra\denotes\wedge\ \exists x:|X|\s (e(x)=y\wedge E_X(x,ra)).\]
We will say that such a $U$ witnesses the fact the $e$ is a regular epimorphism. If there exists such a $U\in\phi$, then in particular, $U$ is inhabited and in this case, it follows easily by soundness that $e\colon |X|\to|Y|$ is a regular epimorphism. Therefore, we can summarize the above as follows: $e\colon X\to Y$ is a regular epimorphism if and only if there exists a $U\in\phi$ witnessing this.

It follows at once that $\asm(A,\phi)$ has regular epi-mono factorizations. Indeed, if $f\colon X\to Y$ is a morphism, then we can first factor $f$ in $\mathcal{C}$ as $|X|\stackrel{e}{\twoheadrightarrow} |Z|\stackrel{m}{\mono} |Y|$, and then put
\[E_Z = \{(z,a)\in |Z|\times A\mid \exists x: |X|\s (e(x)=z\wedge E_X(x,a))\}.\]
Here $\mathsf{I}$ both tracks $e\colon X\to Z$ and witnesses the fact that $e$ is a regular epimorphism, and any tracker of $f$ also tracks $m\colon Z\to Y$.

So, in order to prove that $\asm(A,\phi)$ is regular, it remains to show that regular epimorphisms are stable under pullback. Suppose that we have a cospan $X\stackrel{f}{\longrightarrow} Z\stackrel{g}{\longleftarrow} Y$ with $f$ a regular epimorphism, and define its pullback $W$ as above. Let $U\in\phi$ witness the fact that $f$ is a regular epimorphism and let $V\in \phi$ track $g$. Then there exists a $T\in\phi$ realizing
\[\lambda x.\mathsf{P}(U(Vx))x.\]
We claim that $T$ also witnesses the fact that $q$ is a regular epimorphism. Let $T'$ be a realizer of $\lambda yzwx. y(z(wx))x$ such that $T\subseteq T'\mathsf{P}UV$.

Now reason inside $\mathcal{C}$ and consider $y\in |Y|$ and $t,a\in A$ such that $E_Y(y,a)$ and $T(t)$. Then there exist $t',b,r,s\in A$ such that $T'(t')$, $\mathsf{P}(b)$, $U(r)$, $V(s)$ and $t = t'brs$. Since $E_Y(y,a)$ and $V(s)$, we see that $sa\denotes$ and $E_Z(g(y),sa)$. Combining this with $U(r)$ yields that $r(sa)\denotes$, and we find an $x\in |X|$ such that $f(x)=g(y)$ and $E_X(x,r(sa))$. Since $|W|$ is the pullback of $f$ and $g$, we get that there is a $w\in |W|$ such that $p(w) = x$ and $q(w)=y$. Since $\mathsf{P}(b)$, $E_X(p(w),r(sa))$ and $E_Y(q(w),a)$, it follows that $E_W(w,b(r(sa))a)$. We conclude that $ta$ is defined and equal to $t'brsa = b(r(sa))a$, so $E_W(w,ta)$, as desired.

If $e\colon X\twoheadrightarrow Y$ is a regular epimorphism in $\mathcal{C}$, then we easily see that $\mathsf{I}$ witnesses the fact that $\nabla e$ is a regular epimorphism, so $\nabla$ preserves regular epimorphisms. Since $\Gamma\dashv\nabla$, we conclude that $\Gamma$ and $\nabla$ are both regular.
\end{proof}

We will say that an assembly is a \emph{constant object} if it is isomorphic to an object in the image of $\nabla$. As a more direct characterization, we have: an assembly $X$ is a constant object if and only if there exists a $U\in\phi$ such that $|X|\times U\subseteq E_X$. 

Every assembly is a subobject of a constant object. Indeed, if $\eta$ is the unit of the adjunction $\Gamma\dashv \nabla$, then $\eta_X\colon X\to\nabla\Gamma X$ is mono, since its underlying arrow is simply $\id_{|X|}$. We introduce the following definition from \cite{wps}.
\begin{defi}
A morphism of assemblies $f\colon X\to Y$ is called \emph{prone} if the naturality square
\[\begin{tikzcd}[column sep=large]
X \arrow[r, "f"] \arrow[d, "\eta_X"']      & Y \arrow[d, "\eta_Y"] \\
\nabla\Gamma X \arrow[r, "\nabla\Gamma f"'] & \nabla\Gamma Y       
\end{tikzcd}\]
is a pullback square. \ruitje
\end{defi}

A morphism $f\colon X\to Y$ is prone if and only if the identity on $|X|$ is an isomorphism $X\to X'$, where $|X'| = |X|$ and $E_{X'} = (f\times\id_A)^\ast(E_Y)$. In particular, if $Y$ is an assembly and $m\colon U\mono|Y|$ is a subobject, then there exists a unique prone subobject of $Y$ whose underlying subobject in $\mathcal{C}$ is $m$, given by $|X| = U$ and $E_X = (m\times \id_A)^\ast(E_Y) = E_Y|_{U\times A}$. Observe that every regular subobject is also prone (but conversely, a prone subobject $m\colon X\mono Y$ is regular if and only if $m$ is already regular in $\mathcal{C}$). We also remark that prone morphisms are stable under pullback. One can prove this directly from the definition, but also from the alternative description of prone morphisms above, along with the explicit description of a pullback in $\asm(A,\phi)$ provided in the proof of \cref{asm_reg}.

We finish this section characterizing the functors and natural transformations that are of the form $\asm(p,f)$ and $\asm(\mu)$ respectively. Our proofs are an adaptation of Longley's methods from \cite{longleyphd}, who obtained similar results for classical absolute PCAs.
\begin{thm}\label{image_asm}
Let $(A,\phi)$ and $(B,\psi)$ be PCAs over $\mathcal{C}$ and $\mathcal{D}$ respectively.
\begin{itemize}
\item[\textup{(}i\textup{)}]	If $p\colon\mathcal{C}\to \mathcal{D}$ is a regular functor, then a functor $F\colon \asm(A,\phi)\to\asm(B,\psi)$ is naturally isomorphic to a functor the form $\asm(p,f)$ for some applicative morphism $(p,f)$ if and only if:
\begin{itemize}
\item	$F$ is regular;
\item	$\Gamma \circ F \simeq p\circ\Gamma$;
\item	$F\circ\nabla \simeq \nabla\circ p$.
\end{itemize}
\item[\textup{(}ii\textup{)}]	Suppose that $(p,f),(q,g)\colon (A,\phi)\to(B,\psi)$ are applicative morphisms and that $\mu\colon p\To q$ is natural transformation. For an assembly $X\in\asm(A,\phi)$, define $\tilde{\mu}_X = \mu_{|X|}$. Then $\tilde{\mu}$ is a natural transformation $\asm(p,f)\To\asm(q,g)$ if and only $\mu$ is an applicative transformation $(p,f)\To(q,g)$.
\end{itemize}
\end{thm}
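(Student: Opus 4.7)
My plan is to treat both parts via constructing natural witnesses and verifying tracker conditions. The substantive work lies in the reverse direction of (i), where I must extract an applicative morphism from a regular functor satisfying the three compatibility conditions, while part (ii) reduces to inspecting a single instance of naturality.

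For the forward direction of (i), I would verify each condition directly: $\Gamma\circ\asm(p,f) = p\circ\Gamma$ is essentially definitional since $|\asm(p,f)(X)| = p(|X|)$; and $\asm(p,f)\circ\nabla \simeq \nabla\circ p$ follows because $f$ is total, so $f\circ(p(Y)\times p(A)) = p(Y)\times B$ for $Y\in\mathcal{C}$. For regularity of $\asm(p,f)$, I would trace through the descriptions of products, equalizers, and regular epis from \cref{asm_reg}: these are built from regular-logical formulas in the base involving the application map and chosen combinators, and since $p$ is regular while $f$ carries its own tracker in $\psi$, the construction transports.

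For the reverse direction of (i), I introduce the ``generic'' assembly $A^\sharp$ with $|A^\sharp|=A$ and $E_{A^\sharp}=\delta_A$, and define $f:=E_{F(A^\sharp)}\subseteq p(A)\times B$ after identifying $|F(A^\sharp)|$ with $p(A)$ via $\Gamma F\simeq p\Gamma$. For requirement (b), I would apply $F$ to the assembly map $\mu\colon D^\sharp\to A^\sharp$ (where $D^\sharp$ is the prone subassembly of $A^\sharp\times A^\sharp$ with underlying object $D$), and combine its image tracker with the pairing combinator in $\psi$ to extract the desired tracker of $f$. For (c), given $U\in\phi$, I form the prone subassembly $U^\sharp\hookrightarrow A^\sharp$ and observe that $!\colon U^\sharp\to\nabla 1$ is a regular epi, witnessed by $\mathsf{K}U\in\phi$ since $(\mathsf{K}U)A\subseteq U$; since $F$ preserves regular epis, the induced witness in $\psi$ yields $f(p(U))\in\psi$, and \cref{lift_left} then completes (c). The natural isomorphism $F\simeq\asm(p,f)$ is obtained by showing that $E_{FX}$ agrees with $f\circ p(E_X)$, which I would do by analyzing $F$ applied to the canonical regular epi onto $X$ from a suitable assembly built on the total relation $E_X$.

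For part (ii), if $\mu$ is an applicative transformation with tracker $U\in\psi$, then applying naturality of $\mu$ to the subobject $E_X\subseteq |X|\times A$ gives $p(E_X)(x,a)\models q(E_X)(\mu_{|X|}(x),\mu_A(a))$, which combined with $f(a,b)\wedge U(r)\models rb\denotes\wedge g(\mu_A(a),rb)$ shows that $U$ tracks $\tilde\mu_X\colon FX\to GX$; naturality of $\tilde\mu$ in $X$ is inherited directly. Conversely, if $\tilde\mu$ is natural, then $\tilde\mu_{A^\sharp}\colon F(A^\sharp)\to G(A^\sharp)$ is an assembly morphism with underlying arrow $\mu_A$, and since $E_{F(A^\sharp)}=f$ and $E_{G(A^\sharp)}=g$, any tracker of this morphism in $\psi$ realizes $f\leq g\circ\overline{\mu_A}$. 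I expect the main obstacle to be ensuring that $F$ preserves prone subobjects so that constructions like $U^\sharp$ and $D^\sharp$ transport correctly; this follows from $F$ preserving pullbacks together with $F\nabla\simeq\nabla p$ and $\Gamma F\simeq p\Gamma$, but the bookkeeping is delicate in the absence of the Heyting-category assumption of \cite{wps} and requires expressing all subobject manipulations purely in regular logic.
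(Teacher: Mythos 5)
Your proposal follows essentially the same route as the paper: the reverse direction of (i) is carried out via the object of realizers $R$ (your $A^\sharp$), the prone subassembly on $D$ for requirement (b), the prone subassemblies $R_U$ with the regular epimorphism to the terminal object for requirement (c), and the reconstruction of $E_{FX}$ from the canonical cover $Y\mono\nabla|X|\times R$; part (ii) is likewise reduced to the single naturality instance at $R$. One minor correction: $E_{\asm(p,f)(\nabla Y)}$ equals $p(Y)\times f(p(A))$ rather than $p(Y)\times B$, but since $f(p(A))\in\psi$ this is still a constant object, so your conclusion that $\asm(p,f)\circ\nabla\simeq\nabla\circ p$ is unaffected.
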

\begin{proof}
(i) First suppose that $F$ is of the form $\asm(p,\id)$, i.e. that our applicative morphism is cocartesian. We will show that $F$ is regular. For binary products, one needs to observe the following: if $X,Y\in\asm(A,\phi)$, then $|F(X\times Y)| = p(|X|)\times p(|Y|)$ and $E_{F(X\times Y)}$ is the subobject
\[\{(x,y,a)\mid \exists b,c,d: p(A)\s (p(\mathsf{P})(b)\wedge p(E_X)(x,c)\wedge p(E_Y)(y,d)\wedge bcd=a)\}\]
of $p(|X|)\times p(|Y|)\times p(A)$. Since $p(\mathsf{P})$ belongs to $\langle p(\phi)\rangle$ and realizes $\lambda xyz.zxy$ w.r.t.\@ $p(A)$, we can conclude that $F(X\times Y)$ is a product of $FX$ and $FY$. For regular epimorphisms, we observe: if $U\in\phi$ witnesses the fact that $e\colon X\to Y$ is a regular epimorphism, then $p(U)\in\langle p(\phi)\rangle$ witnesses this for $p(e)=Fe\colon FX\to FY$. The preservation of the terminal object and of equalizers is easy, so we conclude that $F$ is indeed regular. Moreover, if $F$ is of the form $\asm(\id,f)$ for some vertical applicative morphism $f$, then we can show that $F$ is regular by adapting Longley's argument from \cite{longleyphd} (Propositions 2.2.2 and 2.2.3). 

Now take a general $F$ of the form $\asm(p,f)$, with $(p,f)\colon (A,\phi)\to(B,\psi)$. Since any applicative morphism can be decomposed as a cocartesian morphism followed by a vertical one, the above implies that $F$ is regular. Moreover, we have $\Gamma \circ F = p\circ\Gamma$ by definition. Finally, suppose that $X\in\asm(A,\phi)$ is constant, and take $U\in\phi$ such that $|X|\times U\subseteq E_X$. Then it easily follows that $p(|X|)\times f(p(U))\subseteq p(E_X)\circ f = E_{FX}$. We have $p(U)\in p(\phi)$, so $f(p(U))\in\psi$, and therefore $FX$ is a constant object as well. In other words, $F$ preserves constant objects, which implies $F\circ\nabla \simeq \nabla\circ p$.

For the converse, suppose that $F$ satisfies the three requirements above. It is easy to see that $F$ is naturally isomorphic to an $F'$ such that $\Gamma \circ F' = p\circ\Gamma$. Therefore, we assume that we have $\Gamma \circ F = p\circ\Gamma$ on the nose. Since $F$ preserves pullbacks and commutes with $\nabla$ and $\Gamma$, we see that $F$ also preserves prone morphisms. In particular, $F$ preserves prone subobjects. 

First, we define the assembly $R\in \asm(A,\phi)$ by $|R| = A$ and $E_R = \delta_A$. (This object is also known as the \emph{object of realizers}.) Then $FR$ satisfies $|FR| = p(|R|) = p(A)$, so $E_{FR}$ is a subobject of $p(A)\times B$. We claim that $f:= E_{FR}$ is an applicative morphism $p^\ast(A,\phi)\to(B,\psi)$, so that $(p,f)$ is an arrow $(A,\phi)\to (B,\psi)$ in $\mathsf{PCA}$.

Requirement (a) is immediate since $FR$ is an assembly. For requirement (b), consider the prone subobject $S\mono R\times R$ with $|S| = D \subseteq A\times A$. If $k\colon D\to A$ is the application map, then $k$ is a morphism $S\to R$, since it is tracked by any realizer of $\lambda x. (\mathsf{P}_0x)(\mathsf{P}_1x)$. Since $F$ preserves products and prone subobjects, we see that $FS\mono FR\times FR$ is also prone, and that $p(k) = F(k)$ is a morphism $FS\to FR$, say tracked by $U\in\psi$. Without loss of generality,
\[E_{FS} = \{(a,a',b)\in p(D)\times B\mid \exists c,d,d': B\s (\mathsf{P}(c)\wedge E_{FR}(a,d)\wedge E_{FR}(a',d')\wedge cdd'=b)\}.\]
From this it easily follows that any realizer of $\lambda xy.U(\mathsf{P}xy)$ tracks $f$, so requirement (b) holds. For requirement (c), consider a $U\in\phi$. Define the prone subobject $R_U\mono R$ by $|R_U| = U$. Then $e\colon R_U\to 1$ is a regular epimorphism, since $\mathsf{K}U$ witnesses this fact. It follows that $FR_U\mono FR$ is prone and $Fe\colon FR_U\to F1$ is regular epi, which means that without loss of generality,
\[E_{F1} = \{b\in B\mid \exists a: p(U) E_{FR}(a,b)\} = f(p(U)).\]
But $F1$, being the terminal object of $\asm(B,\psi)$, must be a constant object, which implies that $f(p(U))\in\psi$. By using \cref{lift_left}, we see that requirement (c) holds as well.

It remains to show that $F\simeq \asm(p,f)$. Let $X\in\asm(A,\phi)$, and consider the assembly $\nabla|X|\times R$. We can describe this as the assembly such that $|\nabla |X|\times R| = |X|\times A$ and 
\[E_{\nabla|X|\times R} = |X|\times \delta_A = \{(x,a,a')\in (|X|\times A)\times A\mid a=a'\}.\]
Let $Y\mono \nabla|X|\times R$ be the prone subobject with $|Y| = E_X\subseteq |X|\times A$. Then the projection map $\pi\colon E_X\to |X|$ is a regular epimorphism $Y\to X$, since $\mathsf{I}$ witnesses this fact. It follows that the subobject $FY\mono F\nabla|X|\times FR$ is prone and that $F\pi\colon FY\to FX$ is regular epi. Since $F\nabla|X|$ is constant, we can assume without loss of generality that
\[E_{F\nabla|X|\times FR} = p(|X|)\times E_{FR} = \{(x,a,b)\in (p(|X|)\times p(A))\times B\mid E_{FR}(a,b)\}.\]
We see that the identity on $p(|X|)$ is an isomorphism $FX\to X'$, where $|X'| = p(|X|)$ and
\[E_{X'} = \{(x,b)\in p(|X|)\times B\mid \exists a: p(A)\s (p(|Y|)(x,a)\wedge E_{FR}(a,b))\}.\]
Since $|Y| = E_X$ and $E_{FR}=f$, this means that $X' = \asm(p,f)(X)$, which completes the proof of (i).

For (ii), we already know that $\tilde{\mu} = \asm(\mu)$ is a natural transformation if $\mu$ is an applicative transformation. Conversely, suppose that $\tilde{\mu}$ is a natural transformation. Then in particular, $\mu_A$ should be a morphism $\asm(p,f)(R)\to\asm(q,g)(R)$, which means exactly that $\mu$ is an applicative transformation $(p,f)\To (g,q)$.
\end{proof}
\begin{rem}
Let $(A,\phi)$ and $(B,\psi)$ be PCAs over $\mathcal{C}$ and $\mathcal{D}$ respectively. Then we can view $\asm$ as a functor $\mathsf{PCA}((A,\phi),(B,\psi))\to \mathsf{REG}(\asm(A,\phi),\asm(B,\psi))$. This functor is faithful, and \cref{image_asm} describes its essential image. \ruitje
\end{rem}
\begin{rem}
In the case of classical absolute PCAs, the requirement that $F$ preserves constant objects can be removed, since any functor commuting with $\Gamma$ automatically commutes with $\nabla$ (\cite{longleyphd}, Proposition 2.3.3). The argument used to show this is set theoretic and involves selecting a set of sufficiently high cardinality. We have not been able to generalize this proof to the present situation. \ruitje
\end{rem}

\section{Products and Slicing}\label{section_slice}

In this section, we show that categories of the form $\asm(A,\phi)$ are, up to equivalence, closed under small products and under slicing. The first claim, which in fact holds up to isomorphism, follows from the following proposition, whose proof is omitted.

\begin{prop}
The functor $\asm$ preserves small \textup{(}2-\textup{)}products. \qed
\end{prop}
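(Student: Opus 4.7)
The plan is to exploit the explicit description of the product PCA from the preceding proposition. If the index set is $I$ and each $(A_i,\phi_i)$ lives over $\mathcal{C}_i$, then the product PCA has underlying category $\mathcal{C} = \prod_{i\in I}\mathcal{C}_i$, underlying object $A = (A_i)_{i\in I}$, and filter $\phi = \{(U_i)_i \mid U_i\in \phi_i \text{ for all } i\}$, with projections $(p_i,\delta_{A_i})$ where $p_i\colon\mathcal{C}\to\mathcal{C}_i$ is the coordinate projection. Since regular logic in $\prod_i\mathcal{C}_i$ is interpreted coordinatewise, everything below reduces to unpacking definitions pointwise.

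The key observation is that an assembly over $(A,\phi)$ unpacks into a family of assemblies $X_i$ over the individual $(A_i,\phi_i)$. An object $|X|$ in $\mathcal{C}$ is a tuple $(|X|_i)_i$, and a subobject $E_X\subseteq |X|\times A$ corresponds to a tuple of subobjects $E_{X,i}\subseteq |X|_i\times A_i$; the totality condition for $E_X$ decomposes coordinatewise into the totality conditions for the $E_{X,i}$. The same holds for morphisms: a function $f\colon|X|\to|Y|$ is a family $(f_i)_i$, and because $\phi$ is precisely the set of subobjects all of whose components lie in the respective $\phi_i$, a set $U = (U_i)_i$ tracks $f\colon X\to Y$ in $\asm(A,\phi)$ iff each $U_i\in\phi_i$ tracks $f_i\colon X_i\to Y_i$. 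This gives a functor $\Phi\colon\asm(A,\phi)\to \prod_i\asm(A_i,\phi_i)$ that is bijective on objects and fully faithful, hence a strict isomorphism of categories.

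It then remains to check that $\Phi$ identifies the cone $(\asm(p_i,\delta_{A_i}))_{i\in I}$ with the tuple of projections. Noting that $p_i(\phi) = \phi_i$ is already a filter (so $\langle p_i(\phi)\rangle = \phi_i$), the functor $\asm(p_i,\delta_{A_i})$ sends $X$ to an assembly with underlying object $p_i(|X|) = |X|_i$ and realizability relation $\delta_{A_i}\circ p_i(E_X) = E_{X,i}$; this is exactly the $i$-th component of $\Phi(X)$. The 2-dimensional universal property follows because natural transformations $\alpha\colon F\Rightarrow G$ with codomain $\asm(A,\phi)$ are just families of morphisms in $\asm(A,\phi)$ satisfying naturality, and under $\Phi$ these correspond componentwise to tuples of natural transformations into the $\asm(A_i,\phi_i)$.

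There is no real obstacle; the only thing to be careful about is keeping track of the two directions of the tracker correspondence, namely that a componentwise tracker $(U_i)_i$ with $U_i\in\phi_i$ automatically lies in $\phi$ (this is precisely how $\phi$ was defined) and that, conversely, any $U\in\phi$ projects to trackers $U_i\in\phi_i$. Because this reduction is pointwise exact, the isomorphism $\Phi$ works on the nose, yielding the strict 2-product claim rather than a mere pseudoproduct.
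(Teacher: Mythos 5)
Your argument is correct, and it is the evident coordinatewise unpacking that the paper leaves out (the proof of this proposition is omitted there, with the remark that the product is preserved up to isomorphism — in fact your $\Phi$ is an isomorphism of categories commuting strictly with the projection cones, since $p_i(\phi)=\phi_i$ as you note). The only point worth making explicit is that fullness of $\Phi$ (choosing a tracker $U_i\in\phi_i$ for each coordinate simultaneously) uses the Axiom of Choice on the index set $I$, exactly as in the paper's construction of the 2-product in $\mathsf{PCA}$ (cf.\ \cref{remarks_product}(i)), so this is consistent with the ambient assumptions.
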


We now turn our attention to showing that a slice of a category of the form $\asm(A,\phi)$ is again of this form. This result was already obtained by Stekelenburg (\cite{wps}, p.\@ 62), but in an indirect way. We will construct the required PCA explicitly, and use this to compute some specific slices of categories of assemblies.

But first, let us explain how the obstacle mentioned in the Introduction is removed in the current setting. There we explained that the terminal object of a category of the form $\asm(A)$ (or $\asm(A,C)$) is always projective, whereas the terminal object of $\asm(A)/I$ is only projective if $I$ itself is projective in $\asm(A)$. It turns out that for PCAs in the current setting, the terminal object of $\asm(A,\phi)$ need not be projective. An obvious reason for this is that $\Gamma\colon\asm(A,\phi)\to\mathcal{C}$ preserves projectives (this easily follows from the fact that $\nabla$ is regular), so the terminal object of $\asm(A,\phi)$ can only be projective if the terminal object of $\mathcal{C}$ is already projective. But even if $1\in\mathcal{C}$ is projective, the projectivity of $1\in\asm(A,\phi)$ puts a very strong requirement on the filter $\phi$. We prove this in the following proposition, the first part of which is Proposition 2.5.9 from \cite{wps}.

\begin{prop}
Let $(A,\phi)$ be a PCA over the regular category $\mathcal{C}$.
\begin{itemize}
\item[\textup{(}i\textup{)}]	If $1\in\asm(A,\phi)$ is projective, then $\phi$ is generated by singletons.
\item[\textup{(}ii\textup{)}]	If $1\in\mathcal{C}$ is projective and $\phi$ is generated by singletons, then $1\in\asm(A,\phi)$ is projective.
\end{itemize}
\end{prop}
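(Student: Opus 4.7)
For (i), given $U \in \phi$, the plan is to produce a global section of $A$ whose singleton image lies in $\phi$ and is contained in $U$. The natural candidate is to consider the prone subobject $R_U$ of the realizer assembly $R = (A, \delta_A)$ with $|R_U| = U$, so that $E_{R_U}(u,a)$ holds exactly when $u = a$. The map $R_U \to 1$ is a regular epimorphism in $\asm(A,\phi)$: since $\mathsf{K}ua = u$ for $u \in U$ and any $a \in A$, the set $\mathsf{K}U \in \phi$ witnesses this in the sense of the proof of \cref{asm_reg}. Projectivity of $1$ in $\asm(A,\phi)$ then yields a section $s\colon 1 \to R_U$; composing with the inclusion $U \mono A$ gives a global section $s\colon 1 \to A$ landing in $U$. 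If $W \in \phi$ tracks $s$, then unpacking the tracker condition yields $WA \subseteq \{s\}$ as subobjects of $A$, and since $WA \in \phi$, upward closure forces $\{s\} \in \phi$.

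To finish (i), set $C = \{s\colon 1 \to A \mid \{s\} \in \phi\}$. This is closed under application since $\{s\}\{t\} = \{st\}$ whenever $st\denotes$, combined with the fact that $\phi$ is itself closed under application. By the previous paragraph, every $U \in \phi$ contains a singleton $\{s\}$ with $s \in C$, so $\phi = \phi_C$ as required.

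For (ii), suppose $\phi = \phi_C$ and let $e\colon X \to 1$ be a regular epimorphism in $\asm(A,\phi)$, witnessed by some $V \in \phi$. Choose $c \in C$ with $\{c\} \subseteq V$ and fix any $a_0 \in C$ (which exists since $\mathsf{K} \in \phi = \phi_C$ contains a singleton). Specialising the regular-epi witness condition to $r = c$ and $a = a_0$ shows both that $ca_0\denotes$, giving a global section in $C$ by closure of $C$ under application, and that the subobject $T := \{x \in |X| \mid E_X(x, ca_0)\}$ of $|X|$ maps regular epimorphically to $1$ in $\mathcal{C}$. Projectivity of $1$ in $\mathcal{C}$ then yields a section $x\colon 1 \to T \mono |X|$, i.e.\ a global section of $|X|$ satisfying $E_X(x, ca_0)$. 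Choosing some $\mathsf{k} \in C$ with $\{\mathsf{k}\} \subseteq \mathsf{K}$, the singleton $\{\mathsf{k}(ca_0)\} \in \phi$ tracks $x$ as a morphism $1 \to X$ in $\asm(A,\phi)$, since $\mathsf{k}(ca_0)\cdot a = ca_0$ for every $a \in A$; the composite $e \circ x$ is then automatically $\id_1$, giving the required section.

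The main obstacle in both directions will be arranging a clean exchange between statements valid in the internal logic and actual global sections. In (i), projectivity of $1$ in $\asm(A,\phi)$ is precisely what allows us to pass from a potentially non-principal element $U \in \phi$ to an honest singleton generator. In (ii), projectivity of $1$ in $\mathcal{C}$ is what converts the existentially quantified element from the regular-epi witness condition into an actual global section of $|X|$. Both parts hinge on the characterisation of regular epimorphisms in $\asm(A,\phi)$ established in the proof of \cref{asm_reg}.
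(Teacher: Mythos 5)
Your proof is correct and takes essentially the same route as the paper's: part (i) uses the prone subobject $R_U$ of the realizer assembly, the regular-epi witness $\mathsf{K}U$, projectivity of $1$ in $\asm(A,\phi)$, and a tracker argument forcing the resulting singleton into $\phi$, while part (ii) picks a singleton $c\subseteq V$ inside the witness and uses projectivity of $1$ in $\mathcal{C}$ to extract a tracked global section. The only cosmetic difference is in (ii), where the paper replaces $\nabla 1$ by an isomorphic assembly whose existence predicate is a chosen $c_0\in C$ and lets $c$ itself track the section, whereas you apply $c$ to a fixed $a_0\in C$ and track with a constant combinator $\mathsf{k}(ca_0)$ --- the same idea.
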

\begin{proof}
(i) Let $R\in\asm(A,\phi)$ be the object of realizers, i.e., $|R|=A$ and $E_R = \delta_A\subseteq A\times A$, and consider the set
\[C=\{a\colon 1\mono A\mid a\mbox{ is a morphism }1\mono R\mbox{ in }\asm(A,\phi)\}\]
of global sections of $A$. We claim that $C$ generates $\phi$. To show that $C$ is closed under application, consider again the prone subobject $S\subseteq R\times R$ given by $|S|=D$. As we saw in the previous section, the application map $D\to A$ is a morphism $S\to R$. Now suppose we have $a,b\in C$ such that $ab\denotes$. Then the global section $\langle a,b\rangle\colon 1\mono A\times A$ factors through $D$, and we get the diagram
\[\begin{tikzcd}
          & 1 \arrow[d, hook] \arrow[ld, "{\langle a,b\rangle}"', hook] \arrow[rd, "ab", hook] &   \\
R\times R & S \arrow[l, hook] \arrow[r, "\cdot"']                                              & R
\end{tikzcd}\]
in $\asm(A,\phi)$, which shows that $ab\in C$ as well. It remains to show that $\phi = \phi_C$.

Let $a\in C$, and let $U\in\phi$ track the morphism $a\colon 1\mono R$. Then $UA\denotes$ and $UA\subseteq c$, which shows that $c\in \phi$ as well. From this, we may conclude that $\phi_C = \langle C\rangle \subseteq\phi$.

Conversely, suppose that $U\in\phi$, and consider the prone subobject $R_U\mono R$ with $|R_U|=U$. Then $R_U\to 1$ is a regular epimorphism, for $\mathsf{K}U$ witnesses this. Since we assumed that $1\in\asm(A,\phi)$ is projective, this regular epimorphism splits and yields a global section $a\colon 1\mono R_U\mono R$. We see that $a\in C$, and $a\subseteq U$, so $U\in\phi_C$, as desired.

(ii) Suppose that $\phi=\phi_C$ for some set $C\subseteq\mathcal{C}(1,A)$ that is closed under application. Then $C$ is nonempty, so we can select a $c_0\in C$. Now let $1'\in\asm(A,\phi)$ be the object defined by $|1'| = 1$ and $E_{1'} = 1 \stackrel{c_0}{\mono} A \cong 1\times A$. Then $1'$ is isomorphic to $1=\nabla 1$, since $\mathsf{I}\in\phi$ tracks $\id_1$ as a morphism $1'\to 1$, whereas $\mathsf{K}c_0\in\phi$ tracks $\id_1$ as a morphism $1\to 1'$. This means that $1'$ is also a terminal object of $\asm(A,\phi)$, so it suffices to prove that $1'$ is projective.

To this end, let $X\to 1'$ be a regular epimorphism, and let $U\in\phi$ witness this fact. Then we can select a $c\in C$ such that $c\subseteq U$, and we find that $\models \exists x:|X|\s (E_X(x,cc_0))$. Since $1\in\mathcal{C}$ is projective, this implies that there exists a global section $x\colon 1\mono |X|$ such that $\models E_X(x,cc_0)$. Moreover, $c\in\phi$ tracks $x$ as a morphism $1'\mono X$, which completes the proof that $1'$ is projective.
\end{proof}

For further results on the behavior of projective objects of $\asm(A,\phi)$ in the case where $\phi$ is generated by singletons, we refer the reader to Section 2.5 of \cite{wps}.

Now let $(A,\phi)$ be a PCA over $\mathcal{C}$ and let $I$ be an object of $\mathcal{C}$. We consider the regular category $\mathcal{C}/I$ and the pullback functor $I^\ast\colon\mathcal{C}\to\mathcal{C}/I$, which is a regular functor. It immediately follows that $I^\ast(A)$ can be equipped with a partial applicative structure in the obvious way. First of all, we will spell out what a PCA $(I^\ast(A),\psi)$ over $\mathcal{C}/I$ is in terms of the internal logic of $\mathcal{C}$.

Since an arrow is mono in $\mathcal{C}/I$ if and only if it is mono in $\mathcal{C}$, we see that the subobjects of $I^\ast(A)$ in $\mathcal{C}/I$ are the subobjects of $I\times A$ in $\mathcal{C}$. A subobject $U\subseteq I\times A$ is inhabited in $\mathcal{C}/I$ if and only if it is fiberwise inhabited in $\mathcal{C}$, i.e., $\models_{i:I} \exists a: A\ (U(i,a))$. The composition of two subobjects $U,V\subseteq I\times A$ is defined iff $U\times_I V\subseteq I\times D$, and in this case, $UV$ is the image of the map $U\times_I V\subseteq I\times D\to I\times A$. In other words, 
\[UV=\{(i,a)\in I\times A\mid \exists b,c: A\s (U(i,b)\wedge V(i,c)\wedge bc=a)\}.\]
A filter $\psi$ on $I^\ast(A)$ is then a set of fiberwise inhabited subobjects of $I\times A$ that is upwards closed and closed under the application above.
Since $I^\ast$ is a regular functor, we know that such a filter $\psi$ is combinatorially complete whenever it extends $I^\ast(\phi)$.

Suppose that a PCA $(I^\ast(A),\psi)$ over $\mathcal{C}/I$ is given. An assembly $X$ over this PCA is an object $|X|\stackrel{k_X}{\longrightarrow} I$ of $\mathcal{C}/I$, together with a subobject $E_X\subseteq |X|\times A$ that is total in $\mathcal{C}$, i.e., such that $\models_{x:|X|}\s \exists a:A\s (E_X(x,a))$.
Now consider another assembly $Y$, and suppose that $f\colon |X|\to |Y|$ is an arrow of $\mathcal{C}/I$, that is, $k_Y\circ f = k_X$. Then a tracker of $f\colon X\to Y$, spelled out in terms of the internal logic of $\mathcal{C}$, is an inhabited $U\subseteq I\times A$ such that
\[k_X(x)=i\wedge E_X(x,a)\wedge U(i,r)\models_{x:|X|;i:I;r,a:A} ra\denotes\wedge\ E_Y(f(x),ra).\]
That is, if $(i,r)$ is in $U$, then $r$ must track $f$ as if it were a morphism $(|X|,E_X)\to(|Y|,E_Y)$ in $\asm(A,\phi)$, but only for those $x\in |X|$ that lie in the fiber of $i$.

Now let $I$ be an assembly over $(A,\phi)$. By definition, $E_I$ is an inhabited subobject of $|I|^\ast(A)$ in $\mathcal{C}/|I|$. We will show that $\asm(A,\phi)/I$ is equivalent to $\asm(|I|^\ast(A),\phi_I)$, where 
\[\phi_I = \langle |I|^\ast(\phi)\cup\{E_I\}\rangle.\]
Observe that $\phi_I$ is combinatorially complete since it extends $|I|^\ast(\phi)$. First, we give an alternative characterization of $\phi_I$.
\begin{lem}\label{phii_expl_lem}
Let $I$ be an assembly over the PCA $(A,\phi)$. Then
\begin{align}\label{phii_explicitly}
\phi_I = \{U\subseteq |I|\times A\mid \exists V\in\phi\s (|I|^\ast(V)\cdot E_I\denotes\wedge\ |I|^\ast(V)\cdot E_I\subseteq U)\}.
\end{align}
\end{lem}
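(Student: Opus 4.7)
The plan is to show both inclusions of \eqref{phii_explicitly} separately, with the nontrivial direction being $\subseteq$.

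For the $\supseteq$ direction I would argue directly from the definition of $\phi_I$ as a filter: for any $V\in\phi$, the subobject $|I|^\ast(V)$ belongs to $|I|^\ast(\phi)\subseteq\phi_I$, and $E_I\in\phi_I$, so by closure under application (assuming $|I|^\ast(V)\cdot E_I\denotes$) we have $|I|^\ast(V)\cdot E_I\in\phi_I$. Upwards closure then delivers any $U$ with $|I|^\ast(V)\cdot E_I\subseteq U$.

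For the $\subseteq$ direction I would unfold the explicit description \eqref{generated_filter} of the generated filter $\phi_I$: an element $U\in\phi_I$ arises from a term $t(\vec{x})$ and subobjects $W_1,\ldots,W_m\in |I|^\ast(\phi)\cup\{E_I\}$ with $t(\vec{W})\denotes$ and $t(\vec{W})\subseteq U$. Reordering, write $t=t(\vec{y},\vec{z})$ where the $\vec{y}$ slots are filled by $|I|^\ast(V_1),\ldots,|I|^\ast(V_k)$ with $V_j\in\phi$, and the $\vec{z}$ slots (say $n$ of them) are all filled by $E_I$. The key idea is to collapse the $n$ occurrences of $E_I$ into one. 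Assuming $n\geq 1$, consider the term $s(\vec{y},z):=t(\vec{y},z,z,\ldots,z)$; by combinatorial completeness of $\phi$ there exists $W'\in\phi$ realizing $\lambda\vec{y}\,z.s$, whence $V:=W'\cdot V_1\cdots V_k\in\phi$ is defined and realizes $\lambda z.s(\vec{V},z)$. Applying $|I|^\ast$ and using that $|I|^\ast$ preserves regular sequents, an internal argument in $\mathcal{C}$ then shows $|I|^\ast(V)\cdot E_I\denotes$ and $|I|^\ast(V)\cdot E_I\subseteq t(|I|^\ast(\vec{V}),E_I,\ldots,E_I)\subseteq U$, as desired.

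The case $n=0$ (no occurrences of $E_I$) I would handle separately by a small trick: observe that $t(|I|^\ast(\vec{V}))=|I|^\ast(t(\vec{V}))$ and take $V:=\mathsf{K}\cdot t(\vec{V})\in\phi$. Since any $r\in V$ satisfies $rs=t(\vec{V})$-representative for every $s$, and $E_I$ is fiberwise inhabited, one checks $|I|^\ast(V)\cdot E_I\subseteq |I|^\ast(t(\vec{V}))\subseteq U$. I expect the main technical obstacle to be the internal-logic bookkeeping: verifying that the ``diagonal'' reading of $t(\vec{y},z,\ldots,z)$ gives the right fiberwise containment inside the composition $|I|^\ast(V)\cdot E_I$ as computed in $\mathcal{C}/|I|$, and checking all the definedness conditions implicit in applying combinators in $\mathcal{P}^\ast A$. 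Both points are routine but require the careful invocation of the fact that $|I|^\ast$ preserves the interpretation of regular formulas.
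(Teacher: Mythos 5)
Your route is genuinely different from the paper's. For the inclusion of $\phi_I$ into the right-hand side of \eqref{phii_explicitly} you unfold the explicit description \eqref{generated_filter} of the generated filter and diagonalize all occurrences of $E_I$ in the witnessing term into a single abstracted variable, realized via combinatorial completeness of $\phi$. The paper never confronts a general term: it checks that the right-hand side of \eqref{phii_explicitly} is itself a filter containing $|I|^\ast(\phi)\cup\{E_I\}$ --- the generators are absorbed via $|I|^\ast(\mathsf{K}U)\cdot E_I = |I|^\ast(U)$ and $|I|^\ast(\mathsf{I})\cdot E_I = E_I$, and closure under application is the single computation $|I|^\ast(\mathsf{S}VV')\cdot E_I\subseteq (|I|^\ast(V)\cdot E_I)\cdot(|I|^\ast(V')\cdot E_I)$ --- so that minimality of $\phi_I$ gives the inclusion. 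The $\mathsf{S}$-combinator step does the induction over terms once and for all, which is what makes the paper's version shorter. Your $\supseteq$ direction coincides with the paper's, and your $n\geq 1$ argument is correct as written: definedness of $W'V_1\cdots V_k$ in $\mathcal{C}$ comes from the first clause of \cref{realizing_set}, and definedness of $|I|^\ast(V)\cdot E_I$ from the second clause transported along $|I|^\ast$ together with the hypothesis $t(\vec{W})\denotes$.

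The $n=0$ case, however, has a genuine gap. Setting $V:=\mathsf{K}\cdot t(\vec{V})$ presupposes that $t(\vec{V})$ denotes in $\mathcal{C}$, but the hypothesis only gives that $t(|I|^\ast(\vec{V}))$ denotes in $\mathcal{C}/|I|$, and definedness is not reflected by regular functors --- this is exactly the warning in the Remark following \cref{HF_on_objects}. When $|I|$ is not well-supported, $t(|I|^\ast(\vec{V}))$ can denote while $t(\vec{V})$ does not, and then $\mathsf{K}\cdot t(\vec{V})$ is not an element of $\phi$ at all (the lemma still holds in such cases, but your argument for it does not go through). The repair stays entirely within your framework: handle $n=0$ exactly as $n\geq 1$ by adjoining a dummy last variable, i.e.\ take $W'\in\phi$ realizing $\lambda\vec{y}\s z.t(\vec{y})$ and put $V:=W'V_1\cdots V_k$; then $V\in\phi$ is defined unconditionally by the first realizer clause, and the second clause, read fiberwise in $\mathcal{C}/|I|$, yields $|I|^\ast(V)\cdot E_I\denotes$ and $|I|^\ast(V)\cdot E_I\subseteq t(|I|^\ast(\vec{V}))\subseteq U$. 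This is the paper's $\mathsf{K}U$ trick applied to the whole term rather than to a single generator.
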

\begin{proof}
First of all, suppose that $U\in\phi$. Then also $\mathsf{K}U\in\phi$. Moreover, we know that $|I|^\ast(\mathsf{K})$ realizes $\lambda xy.x$, so we see that $|I|^\ast(\mathsf{K}U)\cdot E_I\denotes$ and
\[|I|^\ast(\mathsf{K}U)\cdot E_I = |I|^\ast(K)\cdot |I|^\ast(U)\cdot E_I = |I|^\ast(U).\]
This shows that the right-hand side of \eqref{phii_explicitly} contains $|I|^\ast(\phi)$. Furthermore, since $|I|^\ast(\mathsf{I})$ realizes $\lambda x.x$, we have that $|I|^\ast(\mathsf{I})\cdot E_I = E_I$, so $E_I$ belongs to the right-hand side of \eqref{phii_explicitly} as well. This means that the right-hand side of \eqref{phii_explicitly} extends $|I|^\ast(\phi)\cup\{E_I\}$.

Moreover, any filter extending $|I|^\ast(\phi)\cup\{E_I\}$ must clearly extend the right-hand side of \eqref{phii_explicitly}, so it remains to show that this is actually a filter; upwards closure is obvious.

Suppose that we have $U,U'\subseteq |I|\times A$ for which there exist $V,V'\in\phi$ with $|I|^\ast(V)\cdot E_I\subseteq U$ and $|I|^\ast(V')\cdot E_I\subseteq U'$, and such that $UU'\denotes$. Then $\mathsf{S}VV'\in\phi$. Moreover, we know that $|I|^\ast(\mathsf{S})$ realizes $\lambda xyz.xz(yz)$, so we see that $|I|^\ast(\mathsf{S}VV')\cdot E_I\denotes$ and
\[|I|^\ast(\mathsf{S}VV')\cdot E_I = |I|^\ast(\mathsf{S})\cdot |I|^\ast(V)\cdot |I|^\ast (V')\cdot E_I \subseteq (|I|^\ast(V)\cdot E_I)\cdot (|I|^\ast (V')\cdot E_I) \subseteq UU',\]
as desired.
\end{proof}
\begin{thm}\label{slice}
Let $I$ be an assembly over the PCA $(A,\phi)$. Then the categories $\asm(A,\phi)/I$ and $\asm(|I|^\ast(A),\phi_I)$ are equivalent.
\end{thm}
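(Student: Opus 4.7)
The plan is to construct explicit functors in both directions and exhibit natural isomorphisms via the identity on underlying objects, relying essentially on \cref{phii_expl_lem} for the explicit characterization of $\phi_I$.

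First I would define $F\colon \asm(A,\phi)/I\to\asm(|I|^\ast(A),\phi_I)$ by sending an object $(X,f\colon X\to I)$ to the assembly whose underlying arrow in $\mathcal{C}/|I|$ is $f\colon|X|\to|I|$ and whose existence predicate is $E_X\subseteq|X|\times A$, now viewed as a subobject of $|X|\times_{|I|}|I|^\ast(A)\simeq|X|\times A$ in $\mathcal{C}/|I|$. A morphism $g$ tracked by $U\in\phi$ is sent to the same underlying arrow, now tracked by $|I|^\ast(U)\in|I|^\ast(\phi)\subseteq\phi_I$. Note that $F$ discards the tracker of $f$, relying on the fact that $E_I\in\phi_I$ will later allow us to recover it.

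In the other direction, define $G\colon\asm(|I|^\ast(A),\phi_I)\to\asm(A,\phi)/I$ on an object $Z$ with underlying arrow $k_Z\colon|Z|\to|I|$ and existence predicate $E_Z\subseteq|Z|\times A$ by letting $GZ$ be the assembly over $(A,\phi)$ with $|GZ|=|Z|$ and existence predicate
\[E'_Z(z,a)\;\Longleftrightarrow\;\exists b,c,d:A\,\bigl(\mathsf{P}(b)\wedge E_I(k_Z(z),c)\wedge E_Z(z,d)\wedge bcd=a\bigr),\]
packaged with the morphism $k_Z\colon GZ\to I$, which is tracked by any realizer of $\lambda a.\mathsf{P}_0 a\in\phi$. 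Totality of $E'_Z$ is immediate from the totality of $E_I$ and $E_Z$. On a morphism $g\colon Z\to Z'$, necessarily satisfying $k_{Z'}\circ g=k_Z$, tracked by some $U\in\phi_I$, use \cref{phii_expl_lem} to pick a $V\in\phi$ with $|I|^\ast(V)\cdot E_I\subseteq U$; then any realizer of
\[\lambda a.\;\mathsf{P}\cdot(\mathsf{P}_0 a)\cdot\bigl(V(\mathsf{P}_0 a)\cdot(\mathsf{P}_1 a)\bigr)\]
lies in $\phi$ and tracks $g\colon GZ\to GZ'$, because $\mathsf{P}_0 a$ extracts the $E_I$-part, $V\mathsf{P}_0 a$ produces a realizer in the fiber of $k_Z(z)$ that belongs to $|I|^\ast(V)\cdot E_I\subseteq U$, and applying it to $\mathsf{P}_1 a$ transports the $E_Z$-part to an $E_{Z'}$-realizer.

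Then I would show that the identity on underlying objects provides natural isomorphisms $GF\cong\id$ and $FG\cong\id$. For $GF\cong\id$, given $(X,f)$ with $f$ tracked by $W\in\phi$, the identity on $|X|$ is tracked as $X\to GFX$ by a realizer of $\lambda a.\mathsf{P}(Wa)a\in\phi$, synthesizing the $E_I$-component from $W$, and as $GFX\to X$ by a realizer of $\lambda a.\mathsf{P}_1 a\in\phi$, which discards it. For $FG\cong\id$, given $Z$ over $(|I|^\ast A,\phi_I)$, the identity on $|Z|$ is tracked as $FGZ\to Z$ by $|I|^\ast(V)$ for any $V\in\phi$ realizing $\lambda a.\mathsf{P}_1 a$, and as $Z\to FGZ$ by $|I|^\ast(\mathsf{P})\cdot E_I$, which lies in $\phi_I$ by \cref{phii_expl_lem} with $V=\mathsf{P}$. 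Naturality of both isomorphisms is automatic since every component is an identity.

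The main obstacle is the careful bookkeeping of realizers in each filter, in particular making sure that arbitrary trackers $U\in\phi_I$ for morphisms in $\asm(|I|^\ast(A),\phi_I)$ can always be replaced, via \cref{phii_expl_lem}, by a canonical tracker of the form $|I|^\ast(V)\cdot E_I$ with $V\in\phi$; this is exactly what makes the functor $G$ well-defined on morphisms. The conceptual point is that including $E_I$ in the generators of $\phi_I$ encodes precisely the tracking data for $k_Z\colon GZ\to I$ that was forgotten by $F$.
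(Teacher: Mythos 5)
Your proposal is correct and follows essentially the same route as the paper's proof: the same pair of functors (forgetting the tracker of the structure map one way, re-packing an $E_I$-realizer via the pairing combinator the other way), the same tracker $\lambda x.\mathsf{P}(\mathsf{P}_0 x)(V(\mathsf{P}_0 x)(\mathsf{P}_1 x))$ obtained from \cref{phii_expl_lem}, and the same identity-on-objects isomorphisms $GF\cong\id$ and $FG\cong\id$ with the same realizers.
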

\begin{proof}
As usual, let $\mathcal{C}$ be the underlying regular category of $(A,\phi)$. We define the required pseudo inverses
\[\begin{tikzcd}
{\asm(A,\phi)/I} \arrow[rr, shift left, "F"] &  & {\asm(|I|^\ast(A),\phi_I)} \arrow[ll, shift left, "G"].
\end{tikzcd}\]
An object of $\asm(A,\phi)/I$ is a morphism of assemblies $l_X\colon X\to I$. We define the assembly $FX$ over $(|I|^\ast(A),\phi_I)$ simply by $|FX| = |X|$, $k_{FX} = l_X$ and $E_{FX} = E_X$. Moreover, given a commutative triangle
\[\begin{tikzcd}
X \arrow[rr, "f"] \arrow[rd, "l_X"'] &   & Y \arrow[ld, "l_Y"] \\
                                     & I &                    
\end{tikzcd}\]
in $\asm(A,\phi)$, we define $Ff\colon FX\to FY$ simply as $f$. If $U\in\phi$ tracks $f\colon X\to Y$, then $|I|^\ast(U)\in\phi_I$ tracks $Ff\colon FX\to FY$, so we see that $F$ is well-defined, and clearly, $F$ is a functor.

Conversely, suppose that $X$ is an object of $\asm(|I|^\ast(A),\phi_I)$. We define the object $l_{GX}\colon GX\to I$ of $\asm(A,\phi)/I$ by $|GX| = |X|$,
\[E_{GX} = \{(x,a)\in |X|\times A\mid \exists b,c,d: A\s (\mathsf{P}(b)\wedge E_I(k_X(x),c)\wedge E_X(x,d)\wedge bcd = a)\},\]
and $l_{GX} = k_X$. This is clearly an assembly, and $k_X$ is a morphism $GX\to I$ in $\asm(A,\phi)$, since it is tracked by $\mathsf{P}_0$. For an arrow $f\colon X\to Y$ in $\asm(|I|^\ast(A),\phi_I)$, we define $Gf\colon GX\to GY$ simply as $f$. By definition, we have $l_{GY}\circ Gf = k_Y\circ f = k_X = l_{GX}$, but we need to verify that $Gf$ is a morphism $GX\to GY$ in $\asm(A,\phi)$. Let $U\in\phi_I$ be a tracker of $f\colon X\to Y$. By \eqref{phii_explicitly}, there exists a $V\in\phi$ such that $|I|^\ast(V)\cdot E_I\subseteq U$. One now easily verifies that every realizer of
\[\lambda x. \mathsf{P}(\mathsf{P}_0 x)(V(\mathsf{P}_0 x)(\mathsf{P}_1 x)),\]
tracks $Gf$, which shows that $Gf$ is indeed a morphism $GX\to GY$. We conclude that $G$ is a well-defined functor.

It remains to show that $F$ and $G$ are pseudo inverses. If $l_X\colon X\to I$ is in $\asm(A,\phi)/I$, then applying $F$ and $G$ yields the object $l_{GFX}\colon GFX\to I$, where $|GFX| = |X|$ and $l_{GFX} = l_X$, but
\[E_{GFX} = \{(x,a)\in |X|\times A\mid \exists b,c,d: A\s (\mathsf{P}(b)\wedge E_I(l_X(x),c)\wedge E_X(x,d)\wedge bcd = a)\}.\]
In order to show that $GF\simeq\id$, it therefore suffices to show that the identity on $|X|$ is both a morphism $X\to GFX$ and a morphism $GFX\to X$. If $U\in\phi$ tracks $l_X$, then every realizer of $\lambda x.\mathsf{P}(Ux)x$ tracks $\id_{|X|}\colon X\to GFX$, and in the other direction, $\mathsf{P}_1$ tracks $\id_{|X|}\colon GFX\to X$.

If $X$ is an object of $\asm(|I|^\ast(A),\phi_I)$, then $|FGX| = |X|$ and $k_{FGX} = k_X$, but
\[E_{FGX} = \{(x,a)\in |X|\times A\mid \exists b,c,d:A\s (\mathsf{P}(b)\wedge E_I(k_X(x),c)\wedge E_X(x,d)\wedge bcd = a)\}.\]
As above, it suffices to show that the identity on $|X|$ is a morphism $X\to FGX$ and $FGX\to X$. This is indeed the case, since $|I|^\ast(\mathsf{P})\cdot E_I\in\phi_I$ tracks $\id_{|X|}\colon X\to FGX$, whereas $|I|^\ast(\mathsf{P}_1)\in\phi_I$ tracks $\id_{|X|}\colon FGX\to X$. This completes the proof.
\end{proof}

\begin{defi}
If $(A,\phi)$ is a PCA and $I$ is an assembly, then we denote the PCA $(|I|^\ast(A),\phi_I)$ by $(A,\phi)/I$. \ruitje
\end{defi}
In this way, we may reformulate \cref{slice} above as $\asm(A,\phi)/I\simeq \asm((A,\phi)/I)$.

Before we treat our first example, we need the following result.
\begin{prop}
Under the equivalence $\asm((A,\phi)/I)\simeq \asm(A,\phi)/I$ from \cref{slice}, the constant objects of $\asm((A,\phi)/I)$ correspond to objects of $\asm(A,\phi)/I$ that are prone morphisms in $\asm(A,\phi)$.
\end{prop}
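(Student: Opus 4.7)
The plan is to exploit the equivalence $\asm(A,\phi)/I\simeq\asm((A,\phi)/I)$ from \cref{slice}, with pseudoinverses $F$ and $G$ as in its proof. Given an object $l_X\colon X\to I$ of $\asm(A,\phi)/I$, the first observation is that a constant object of $\asm((A,\phi)/I)$ is, up to isomorphism, one in the essential image of $\nabla\colon\mathcal{C}/|I|\to\asm((A,\phi)/I)$. Since $\Gamma\nabla = \id_{\mathcal{C}/|I|}$, we may assume $FX\simeq\nabla(|X|, l_X)$; applying $G$ and using $GFX\simeq X$, this is equivalent to $X\simeq G\nabla(|X|, l_X)$ in $\asm(A,\phi)/I$.

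The main computation is to identify $G\nabla(|X|, l_X)$ with the prone lift $X_{l_X}$ of $l_X$, namely the assembly with $|X_{l_X}| = |X|$, $l_{X_{l_X}} = l_X$, and $E_{X_{l_X}}(x,a) = E_I(l_X(x), a)$. Unpacking the formula for $G$ from the proof of \cref{slice} and using that $E_{\nabla(|X|, l_X)}$ is the full relation, we obtain
\[E_{G\nabla(|X|, l_X)}(x, a) \Leftrightarrow \exists b,c,d\colon A\s (\mathsf{P}(b)\wedge E_I(l_X(x), c)\wedge bcd = a).\]
I would then exhibit the isomorphism $G\nabla(|X|, l_X)\simeq X_{l_X}$ via $\id_{|X|}$: a realizer of $\lambda a.\mathsf{P}aa$ tracks $X_{l_X}\to G\nabla(|X|, l_X)$, sending a realizer $a$ of $l_X(x)$ in $I$ to the pair $\mathsf{P}aa$, and $\mathsf{P}_0$ tracks the reverse direction. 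Both trackers lie in $\phi$ since they are built from universal combinators.

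Putting the pieces together, $FX$ is a constant object iff $X\simeq X_{l_X}$ in $\asm(A,\phi)/I$. If $l_X$ is prone, then by the characterization of proneness from \cref{first_section}, $\id_{|X|}\colon X\to X_{l_X}$ is an isomorphism in $\asm(A,\phi)$, and hence in the slice since its underlying arrow trivially commutes with the structure maps to $I$. Conversely, an isomorphism $f\colon X\simeq X_{l_X}$ in $\asm(A,\phi)/I$ writes $l_X = l_{X_{l_X}}\circ f$, and since $l_{X_{l_X}}$ is prone by construction and prone morphisms are stable under pre-composition with isomorphisms (pullback squares being preserved by iso), $l_X$ is prone.

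The main content lies in the identification $G\nabla(|X|, l_X)\simeq X_{l_X}$; this is essentially a routine exercise with the pairing combinators, and I do not anticipate substantive obstacles beyond careful bookkeeping. Notably, this approach sidesteps the apparent difficulty of producing an element of $\phi_I$ directly from a prone tracker in $\phi$, which would require extending a partial realizer to one defined on all of $E_I$, not just its restriction along $l_X$.
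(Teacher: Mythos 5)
Your proof is correct and is precisely the ``straightforward calculation using the explicit description of the equivalence'' that the paper leaves to the reader: reducing constancy of $FX$ to $X\simeq G\nabla(|X|,l_X)\simeq X_{l_X}$ and then invoking the characterization of prone morphisms. The only streamlining available is to note that $\id_{|X|}\colon X\to X_{l_X}$ is always a morphism (tracked by any tracker of $l_X$), so one only needs the single tracker $\mathsf{P}_0\circ(\text{--})$-style argument in the other direction; but your version via $G\nabla$ is equally valid.
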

\begin{proof}
This is a straightforward calculation using the explicit description of the equivalence $\asm((A,\phi)/I)\simeq \asm(A,\phi)/I$ above.
\end{proof}

\begin{ex}\label{pullback_asm}
Let $(A,\phi)$ be a PCA over $\mathcal{C}$. If $f\colon I\to J$ is a morphism of assemblies, then we also have a pullback functor $f^\ast\colon\mathcal{C}/|J|\to \mathcal{C}/|I|$, which is a regular functor. Moreover, we have the pullback functor $f^\ast\colon \asm(A,\phi)/J\to\asm(A,\phi)/I$, which is regular. By \cref{slice}, we can also view $f^\ast$ as a functor $\asm((A,\phi)/J)\to\asm((A,\phi)/I)$, which clearly commutes with $\Gamma$. Moreover, we know that $f^\ast\colon \asm(A,\phi)/J\to\asm(A,\phi)/I$ preserves prone morphisms, so $f^\ast\colon\asm((A,\phi)/J)\to\asm((A,\phi)/I)$ preserves constant objects, i.e., commutes with $\nabla$. By \cref{image_asm}, $f^\ast$ must be induced by an applicative morphism $(A,\phi)/J\to(A,\phi)/I$.

Let us describe this applicative morphism explicitly. The fact that $f$ is a morphism means precisely that $f^\ast(E_J)\subseteq f^\ast(|J|^\ast(A)) = |I|^\ast(A)$ belongs to $\phi_I$. This yields $f^\ast(|J|^\ast(\phi)\cup\{E_J\})\subseteq \phi_I$, and hence also
\[\langle f^\ast(\phi_J)\rangle = \langle f^\ast(|J|^\ast(\phi)\cup\{E_J\})\rangle\subseteq\phi_I.\]
Therefore, $(f^\ast, \delta_{|I|^\ast(A)})$ is an applicative morphism $(A,\phi)/J\to(A,\phi)/I$. A direct calculcation shows that, under the equivalence from \cref{slice}, $\asm(f^\ast, \delta_{|I|^\ast(A)})$ is indeed naturally isomorphic to the pullback functor $f^\ast\colon \asm(A,\phi)/J\to\asm(A,\phi)/I$. 

If we let $J$ be the terminal assembly, then we see that $(|I|^\ast,\delta_{|I|^\ast(A)})$ is an applicative morphism $(A,\phi)\to(A,\phi)/I$. Moreover, under the equivalence from \cref{slice}, $\asm(|I|^\ast,\delta_{|I|^\ast(A)})$ is naturally isomorphic to $I^\ast\colon\asm(A,\phi)\to\asm(A,\phi)/I$.
\ruitje
\end{ex}

\begin{ex}
Let $(A,C)$ be a classical relative PCA, and let $I$ be an assembly over $(A,C)$. We know that the category $\mathsf{Set}/|I|$ is equivalent to $\mathsf{Set}^{|I|}$. This means that $(A,C)/I$ is isomorphic to the PCA $((A)_{i\in |I|}, C_I)$ over $\mathsf{Set}^{|I|}$, where
\[C_I = \{(U_i\subseteq A)_{i\in |I|}\mid \exists r\in C\s \forall i\in |I|\s \forall a\in A\s (E_I(i,a)\to ra\denotes\wedge\ ra\in U_i)\}.\]
We may picture this as follows. Each coordinate $i\in |I|$ is labelled with information: the set of $a\in A$ such that $E_I(i,a)$. A sequence $(U_i)_{i\in |I|}$ of subsets of $A$ is in the filter $C_I$ precisely when there exists an algorithm that, uniformly in $i\in |I|$, turns the information we have about $i$ into elements of $U_i$. \ruitje
\end{ex}

\begin{ex}\label{slice_partitioned}
Let $(A,C)$ be a PCA over $\mathcal{C}$ that is generated by singletons and consider a \emph{partitioned assembly} $I$. That is, we require $E_I(i,a)\wedge E_I(i,a')\models_{i:I;a,a':A} a=a'$. This means that there exists an arrow $f\colon |I|\to A$ such that $E_I = \{(i,f(i))\mid i\in |I|\}$. Then $(A,C)/I=(I|^\ast(A),C_I)$, where
\begin{align*}
C_I &= \{U\subseteq |I|\times A\mid \exists V\in\phi_C\s (|I|^\ast(V)\cdot E_I\denotes\wedge\ |I|^\ast(V)\cdot E_I\subseteq U)\}\\
&=\{U\subseteq |I|\times A\mid \exists r\in C\s (|I|^\ast(r)\cdot E_I\denotes\wedge\ |I|^\ast(r)\cdot E_I\subseteq U)\}\\
&=\{U\subseteq |I|\times A\mid \exists r\in C\s (\models_{i:I} r\cdot f(i)\denotes\wedge\ U(i,r\cdot f(i)))\}.
\end{align*}
This filter is generated by the set of global sections
\[\{g\colon |I|\to A\mid \exists r\in C\s (\models_{i:I} r\cdot f(i)\denotes\wedge\ r\cdot f(i)=g(i))\}.\]
of $|I|^\ast(A)$. In other words, a global section of $|I|^\ast(A)$, which is a function $|I|\to A$, counts as computable iff it can be computed uniformly in terms of the `basic' element $f$.

We see that being generated by singletons is preserved under slicing over partitioned assemblies. Below (\cref{resub_classifier}), we shall see that it is not preserved by slicing over general assemblies. \ruitje
\end{ex}

We consider a few interesting examples of slicing over partitioned assemblies.
\begin{ex}
Consider a classical relative PCA $(A,C)$. Let $I$ be the partitioned assembly defined by $|I| = 2= \{0,1\}$, $E_I(0,\mathsf{k})$ and $E_I(1,\bar{\mathsf{k}})$, where $\mathsf{k}$ and $\bar{\mathsf{k}}$ are the usual combinators. Then $I$ is the coproduct $1+1$.

Then $(A,C)/(1+1)$ is isomorphic to the PCA $((A,A),C_{1+1})$ over $\mathsf{Set}^2$. The filter $C_{1+1}$ is generated by $C\times C\subseteq A\times A \cong \mathsf{Set}^2(1,(A,A))$. In other words, it contains all pairs $(U_0,U_1)$ such that $U_0\cap C\neq\emptyset$ and $U_1\cap C\neq\emptyset$. From this, it easily follows that $\asm((A,A),C_{1+1})$ is isomorphic to $\asm(A,C)^2$. This is, of course, no surprise: since $\asm(A,C)$ is a quasitopos, we already knew that the categories $\asm(A,C)/(1+1)$ and $\asm(A,C)^2$ should be equivalent. \ruitje
\end{ex}

\begin{ex}
Let $(A,C)$ be a PCA over $\mathcal{C}$ that is generated by singletons and let $r_0\in C$ be a fixed element. (Observe that $C$ is nonempty since $\phi_C$ should be combinatorially complete.) For an object $X$ of $\mathcal{C}$, we consider the partitioned assembly $I$ determined by $|I| = X$ and the function $X\to A$ assuming the constant value $r_0$. Then $I$ is the constant object $\nabla X$. Using \cref{slice_partitioned}, we see that $C_{\nabla X}$ is generated by all $g\colon X\to A$ that factor through some element from $C$. In other words, $U\subseteq X\times A$ belongs to $C_{\nabla X}$ if and only if there is a computable element $r\in C$ such that $X\times r\subseteq U$.

If $\mathcal{C}=\mathsf{Set}$, then we may take $X=2$. Then 
\[C_{\nabla 2} = \{(U_0,U_1)\subseteq(A,A)\mid U_0\cap U_1\cap C\neq\emptyset\},\]
which is generated by the `diagonal' $\{\{(c,c)\}\subseteq (A,A)\mid c\in C\}$. We see that $\asm(A,C)/\nabla 2$ is $\asm(A,C)^2$, except that all realizing in the two coordinates needs to take place \emph{simultaneously}.

If we take $C=A$, so that we start with a classical absolute PCA, then $C_{\nabla 2}$ is \emph{not} the maximal filter on $(A,A)$ (barring the case where $A$ is the zero PCA 1). This means that $((A,A),C_{\nabla 2})$ is not (equivalent to) an absolute PCA. So even if we start out with an absolute PCA, we may end up with a non-absolute PCA after slicing.

Therefore, in contrast with taking products (see Remark \cref{remarks_product}(ii)), slicing forces us to consider non-absolute notions of computability. \ruitje
\end{ex}

\begin{ex}
Consider Kleene's first model $\mathcal{K}_1$ (equipped with the maximal filter $\phi=\mathcal{P}^\ast\mathbb{N}$). We define the partitioned assembly $N$ by $|N|=\mathbb{N}$ and $E_N=\delta_\mathbb{N}$; this is the natural numbers object in $\asm(\mathcal{K}_1)$. Then $\mathcal{K}_1/N$ is isomorphic to the PCA $((\mathcal{K}_1)_{n\in\mathbb{N}},\phi_N)$ over $\mathsf{Set}^\mathbb{N}$, where the filter $\phi_N$ is generated by all \emph{recursive} infinite sequences of natural numbers (more precisely, all global sections $(\{a_n\})_{n\in\mathbb{N}}$, where $(a_n)_{n\in\mathbb{N}}$ is recursive). Therefore $\asm(\mathcal{K}_1)/N$ is equivalent to a category whose objects are infinite sequences of assemblies, and whose arrows are infinite sequences of morphisms of assemblies for which there exists a recursive sequence of trackers.

We know that the product PCA $\mathcal{K}_1^\mathbb{N}$, on the other hand, can be obtained by taking the maximal filter on $(\mathcal{K}_1)_{n\in\mathbb{N}}$, which is generated by \emph{all} infinite sequences of natural numbers. In fact, one can show that $\asm(\mathcal{K}_1)/N$ and $\asm(\mathcal{K}_1)^\mathbb{N}$ are \emph{not} equivalent. \ruitje
\end{ex}


\begin{ex}
Consider Scott's graph model $\mathcal{P}\omega$, equipped with the maximal filter $\phi$. The natural numbers object $N$ of $\asm(\mathcal{P}\omega)$ may be defined by $|N|=\mathbb{N}$ and $E_N(n) = \{\{n\}\}$ for $n\in\mathbb{N}$. For every infinite sequence $(V_n)_{n\in N}$ of elements of $\mathcal{P}\omega$, there exists a Scott continuous function $F\colon\mathcal{P}\omega\to\mathcal{P}\omega$ such that $F(\{n\}) = V_n$ for all $n\in\mathbb{N}$. This implies that $\phi_N$ is the maximal filter on $(\mathcal{P}\omega)_{n\in\mathbb{N}}$, so it follows that $\mathcal{P}\omega/N$ and $\mathcal{P}\omega^\mathbb{N}$ are isomorphic. In particular, $\asm(\mathcal{P}\omega)/N$ and $\asm(\mathcal{P}\omega)^\mathbb{N}$ are equivalent. \ruitje
\end{ex}

Now we consider an example of slicing over a non-partitioned assembly.

\begin{ex}\label{resub_classifier}
Again, consider $\mathcal{K}_1$ with the maximal filter $\phi$. Define the assembly $I$ by $|I| = \{0,1\}$ and
\[E_I = \{(x,n)\in 2\times\mathbb{N}\mid (x=0\wedge n\in K)\vee(x=1\wedge n\not\in K)\},\]
where $K$ is the standard set $\{n\in\mathbb{N}\mid \varphi_n(n)\!\denotes\}$. This assembly is also known as the \emph{r.e.\@ subobject classifier}, which is usually denoted by $\Sigma$. It is easy to see that this assembly cannot be isomorphic to a partitioned assembly. We will show that $\phi_\Sigma$ is not generated by singletons. Suppose that the set of singletons $C$ generates $\phi_\Sigma$. By \cref{phii_expl_lem}, $\phi_\Sigma$ consists of all pairs $(U,V)$ for which there exists a (total) recursive function $f$ satisfying: if $n\in K$, then $f(n)\in U$, whereas if $n\not\in K$, then $f(n)\in V$. This means that the pair $(K,\mathbb{N}\backslash K)$ is certainly in $\phi_\Sigma$, so there must exist an $(\{a\},\{b\})\in C$ with $a\in K$ and $b\not\in K$. Since $(\{a\},\{b\})$ must be in $\phi_\Sigma$, we see that there exists a recursive function $f\colon\mathbb{N}\to\mathbb{N}$ such that $f(n) = a$ for all $n\in K$ and $f(n)=b$ for all $n\not\in K$. Since $a\neq b$, this implies that $K$ is decidable, which we know to be false. (In fact, this argument is very similar to the argument needed to show that $\Sigma$ cannot be partitioned.) \ruitje
\end{ex}

\begin{ex}
Using \cref{resub_classifier} above, we may also construct a PCA over $\mathsf{Set}$ that is not generated by singletons, and is therefore not a PCA in the classical sense. Let $\Pi\colon\mathsf{Set}^2\to\mathsf{Set}$ be the product functor sending a pair $(A,B)$ of sets to $A\times B$. This functor is regular, so we can consider the PCA $\Pi^\ast(\mathcal{K}_1/\Sigma)$. Its underlying set is $\mathbb{N}\times\mathbb{N}$ with the coordinatewise application, and its filter is $\langle\Pi(\phi_\Sigma)\rangle$, which is easily shown to be equal to 
\[\uparrow\!(\Pi(\phi_\Sigma)) = \{W\subseteq\mathbb{N}\times\mathbb{N}\mid\exists (U,V)\in \phi_\Sigma\s (U\times V\subseteq W)\}.\]
Let us denote this filter by $\psi$, and suppose that it is generated by the set of singletons $C$. We certainly have that $K\times(\mathbb{N}\backslash K)\in\psi$, so there must exist a singleton $\{(a,b)\}\in C$ such that $a\in K$ and $b\not\in K$. But now we have $(\{a\},\{b\})\in\phi_\Sigma$ and $a\neq b$, which is impossible as we have seen above. \ruitje
\end{ex}

\section{Computational Density}\label{section_cd}

Let $p\colon \mathcal{D}\to\mathcal{C}$ be a geometric morphism between \emph{toposes}. Then the inverse image $p^\ast\colon \mathcal{C}\to\mathcal{D}$ is always a regular functor, which means we can apply the theory developed above. Of course, the internal logic of toposes is quite a bit stronger than that of regular categories, since toposes interpret full intuitionistic (typed) higher-order logic. For the larger part of this section, we will only need first-order logic. As for regular formulas, if $\varphi(x_1, \ldots, x_n)$ is a first-order formula and $x_i:X_i$, then we write
\[\{(x_1, \ldots, x_n)\in X_1\times \cdots \times X_n\mid \varphi(\vec{x})\}\]
for its interpretation. If $\varphi$ is a first-order sentence, then we write $\mathcal{C}\models \varphi$ to indicate that this sentence is valid in the topos $\mathcal{C}$.

\begin{rem}
The presence of full first-order logic also allows for a somewhat more elegant treatment of the material in the preceding sections. For example, if $A$ is a PAS over $\mathcal{C}$ and $t(x_0, \ldots, x_n)$ is a term, then we may the define the object of all elements that realize it. Explicitly, we can define $\lval\lambda\vec{x}.t\rval\subseteq A$ as:
\[\{r\in A\mid \forall \vec{a}: A^n\s (r\vec{a}\denotes\wedge\ \forall b: A\s (t(\vec{a},b)\denotes\to r\vec{a}b\denotes\wedge\ r\vec{a}b=t(\vec{a},b)))\}.\]
In other words, this is the largest possible subobject of $A$ that can realize $\lambda\vec{x}.t$. If $\phi$ is a filter, then by upwards closure, it contains a realizer of $\lambda\vec{x}.t$ if and only if $\lval\lambda\vec{x}.t\rval\in\phi$. Moreover, the fact that a regular functor $p$ preserves realizers can now be expressed as $p(\lval\lambda\vec{x}.t\rval_A)\subseteq \lval\lambda\vec{x}.t\rval_{p(A)}$ (where the subscript indicates w.r.t.\@ which PAS we compute the suobject of realizers). Similar notation can be introduced for trackers of morphisms of assemblies, trackers of applicative morphisms, and realizers of inequalities between relations. This is, in fact, the approach taken in \cite{wps}, where all the base categories $\mathcal{C}$ are Heyting categories and therefore soundly interpret intuitionistic first-order logic.

Even though toposes themselves are able to interpret full first-order logic, the morphisms betweem them that we consider typically do not preserve all this structure. This is why we have chosen to work with regular categories in the preceding sections. By formulating the key concepts in terms of regular sequents, we were able to see that regular functors suffice to transport the necessary structure from one base category to another. \ruitje
\end{rem}

Suppose that $(p^\ast,f)$ is an applicative morphism $(A,\phi)\to(B,\psi)$, and consider the induced functor $\asm(p^\ast,f)\colon \asm(A,\phi)\to\asm(B,\psi)$. The goal of this section is to answer the following question: under which conditions does $\asm(p^\ast,f)$ have a right adjoint? In the case of classical PCAs, the answer is given by a notion called \emph{computational density} \cite{hofstrajaap}, which we define now.
\begin{defi}\label{classical_cd}
Let $(A,C)$ and $(B,D)$ be classical relative PCAs and let $f\colon (A,C)\to (B,D)$ be an applicative morphism. We say that $f$ is \emph{computationally dense} if there exists an $\mathsf{m}\in D$ such that for all $s\in D$, there exists an $r\in C$ with:
\[\mbox{for all }a\in A,\mbox{ if }s\cdot f(a)\denotes,\mbox{ then }ra\denotes, \mathsf{m}\cdot f(ra)\denotes\mbox{ and }\mathsf{m}\cdot f(ra)\subseteq s\cdot f(a).\]
Here we have written $f(a)$ for the set of $b$ such that $f(a,b)$, and an expression such as $d\cdot f(a)$ denotes $\{db\mid b\in f(a)\}$. \ruitje
\end{defi}
The property defining computational density is of too high logical complexity to make sense in an arbitrary regular category. Therefore, we will only generalize this notion to toposes. However, Johnstone has reformulated the notion of computational density into a much simpler property (\cite{johnstone}, Lemma 3.2), which \emph{does} work for regular categories.
\begin{defi}
Let $(A,C)$ and $(B,D)$ be classical relative PCAs and let $f\colon (A,C)\to (B,D)$ be an applicative morphism. We say that $f$ is \emph{quasi-surjective} if there exists an $\mathsf{m}\in D$ such that for all $s\in D$, there exists an $r\in C$ with $\mathsf{n}\cdot f(r) = s$. (I.e., $\mathsf{n}b$ is defined and equal to $s$, for every $b\in f(r)$.) \ruitje
\end{defi}
Johnstone's proof in fact shows that, in \cref{classical_cd}, it makes no difference if we require $ra$ to be always defined (although one may have to adjust $\mathsf{m}$ in order to achieve this). We now give the appropriate generalizations of the two notions above to our setting.

\begin{defi}
Let $(p,f)\colon (A,\phi)\to (B,\psi)$ be an applicative morphism.
\begin{itemize}
\item[(i)]	$(p,f)$ is called \emph{quasi-surjective} if there exists an $\mathsf{N}\in\psi$ such that for all $U\in\psi$, there exists a $V\in\phi$ satisfying $\mathsf{N}\cdot f(p(V))\denotes$ and $\mathsf{N}\cdot f(p(V))\subseteq U$.
\item[(ii)]	Now suppose that the underlying category $\mathcal{D}$ of $(B,\psi)$ is a topos. Then $(p,f)$ is called \emph{computationally dense} if there exists an $\mathsf{M}\in\psi$ such that for all $U\in\psi$, there exists a $V\in\phi$ such that $V\!A\denotes$ and
\[\mathcal{D}\models \forall r,a: p(A)\s(p(V)(r)\wedge U\cdot f(a)\denotes\ \to \mathsf{M}\cdot f(ra)\subseteq U\cdot f(a)),
\]
where $U\cdot f(a)\denotes$ abbreviates the formula $\forall u,b: B\s (U(u)\wedge f(a,b)\to ub\denotes)$, and $\mathsf{M}\cdot f(ra)\subseteq U\cdot f(a)$ abbreviates
\[\forall m,b: B\s (\mathsf{M}(m)\wedge f(ra,b)\to (mb\denotes\wedge\ \exists u,b': B\s (U(u)\wedge f(a,b')\wedge ub'\denotes \wedge\ ub'=mb))).\]
\end{itemize}
We say that such $\mathsf{N}$ and $\mathsf{M}$ \emph{witness} the quasi-surjectivity resp.\@ the computational density of $(p,f)$. \ruitje 
\end{defi}


\begin{rem}
Observe that our requirement that $V\!A\denotes$ implies that $p(V)\cdot p(A)\denotes$ as well, so that $\mathcal{D}\models\forall r,a:p(A)\s (p(V)(r)\to ra\denotes)$. As we shall see later, it is not sufficient, for our purposes, to require merely that $p(V)\cdot p(A)\denotes$. \ruitje
\end{rem}
\begin{rem}
When reasoning internally in a topos, we will also use expressions such as $U\cdot f(a)\denotes$, trusting that the reader can formulate those as proper first-order statements if desired. Alternatively, the reader can think of $f$ as a map from $A$ into the power object of $B$, rather than a relation between $A$ and $B$. However, most of what follows only requires first-order internal reasoning; we will not need the presence of power objects until the proof of \cref{cd_is_rightadjoint}(ii). \ruitje
\end{rem}

First of all, we study the quasi-surjective applicative morphisms.

\begin{ex}\label{reindexing=qs}
Let $(A,\phi)$ be a PCA over the regular category $\mathcal{C}$ and let $p\colon\mathcal{C}\to\mathcal{D}$ be a regular functor. Then $(p,\delta_{p(A)})\colon (A,\phi)\to p^\ast(A,\phi)$ is quasi-surjective. Indeed, as in the proof of \cref{phii_expl_lem}, one can show that
\[\langle p(\phi)\rangle = \{U\subseteq p(A)\mid \exists V\in\phi\s (p(V)\cdot p(A)\denotes\wedge\ p(V)\cdot p(A)\subseteq U)\}.\]
Since $p(A)\in \langle p(\phi)\rangle$, there exists a realizer $\mathsf{N}\in\langle p(\phi)\rangle$ of $\lambda x. x\cdot p(A)$, and this $\mathsf{N}$ witnesses the quasi-surjectivity of $(p,\delta_{p(A)})$. \ruitje
\end{ex}

\begin{ex}\label{pullback=qs}
Let $(A,\phi)$ be a PCA over $\mathcal{C}$ and let $I$ be an assembly. Then the applicative morphism $(|I|^\ast,\delta_{|I|^\ast(A)})\colon (A,\phi)\to(A,\phi)/I$ from \cref{pullback_asm} is quasi-surjective. Indeed, by \cref{phii_expl_lem}, we have that any realizer $\mathsf{N}\in\phi_I$ of $\lambda x. x\cdot E_I$ witnesses the fact that $(|I|^\ast,\delta_{|I|^\ast(A)})$ is quasi-surjective. A fortiori, all the applicative morphisms $(A,\phi)/J\to(A,\phi)/I$ discussed in \cref{pullback_asm} are quasi-surjective. \ruitje
\end{ex}

\begin{prop}\label{PCAqs}
\begin{itemize}
\item[\textup{(}i\textup{)}]	PCAs, quasi-surjective applicative morphisms and applicative transformations form a 2-category $\mathsf{PCA}_{\textup{qs}}$.
\item[\textup{(}ii\textup{)}]	If $(A,\phi)\stackrel{(p,f)}{\longrightarrow} (B,\psi)\stackrel{(q,g)}{\longrightarrow} (C,\chi)$ are applicative morphisms such that $(q,g)\circ(p,f)$ is quasi-surjective, then $(q,g)$ is quasi-surjective as well.
\end{itemize}
\end{prop}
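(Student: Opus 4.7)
The plan for (i) is to check the two things not already inherited from $\mathsf{PCA}$: identity morphisms are quasi-surjective, and quasi-surjectivity is closed under composition. The identity $(\id_\mathcal{C},\delta_A)\colon(A,\phi)\to(A,\phi)$ works with witness $\mathsf{N}:=\mathsf{I}$; given any $U\in\phi$, take $V:=U$ and use that $\mathsf{I}\cdot U$ is defined and contained in $U$.

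For composition, suppose $(p,f)\colon(A,\phi)\to(B,\psi)$ and $(q,g)\colon(B,\psi)\to(C,\chi)$ are quasi-surjective with witnesses $\mathsf{N}_f\in\psi$ and $\mathsf{N}_g\in\chi$. The composite is $(qp,g\circ q(f))$, and for $V\subseteq A$ we have $(g\circ q(f))(qp(V)) = g(q(f(p(V))))$. I will take as witness any $\mathsf{N}\in\chi$ realizing $\lambda x.\,\mathsf{N}_g\cdot T_g\cdot g(q(\mathsf{N}_f))\cdot x$, where $T_g\in\chi$ tracks $g$ and $g(q(\mathsf{N}_f))\in\chi$ since $q(\mathsf{N}_f)\in\langle q(\psi)\rangle$ and $g$ satisfies applicative morphism axiom (c). Given $U\in\chi$, first use quasi-surjectivity of $(q,g)$ to pick $W\in\psi$ with $\mathsf{N}_g\cdot g(q(W))\subseteq U$; then use quasi-surjectivity of $(p,f)$ to pick $V\in\phi$ with $\mathsf{N}_f\cdot f(p(V))\subseteq W$. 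Applying the regular functor $q$ to the latter inclusion (using that $q$ preserves both inclusions of subobjects and the application map) yields $q(\mathsf{N}_f)\cdot q(f(p(V)))\subseteq q(W)$, and the tracker property of $T_g$ then gives
\[
\mathsf{N}_g\cdot T_g\cdot g(q(\mathsf{N}_f))\cdot g(q(f(p(V))))\;\subseteq\;\mathsf{N}_g\cdot g\bigl(q(\mathsf{N}_f)\cdot q(f(p(V)))\bigr)\;\subseteq\;\mathsf{N}_g\cdot g(q(W))\;\subseteq\;U.
\]
Unpacking the realizer $\mathsf{N}$ then produces $\mathsf{N}\cdot g(q(f(p(V))))\denotes$ and contained in $U$, as required.

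The main (and only) subtlety is that $g$, being a relation rather than a function between PASs, does not commute with application of subobjects: one only has the inclusion $T_g\cdot g(X)\cdot g(Y)\subseteq g(XY)$ whenever $XY\denotes$. This is precisely why the tracker $T_g$ has to appear inside the composite witness $\mathsf{N}$; without it one would have no way to fuse the two separate quasi-surjectivity inequalities into a single inequality of the required form. The 2-categorical structure is then inherited directly from $\mathsf{PCA}$ once the 1-cells form a subcategory.

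Part (ii) is immediate from the above analysis. The plan is to verify that the same $\mathsf{N}\in\chi$ witnessing quasi-surjectivity of the composite $(qp,g\circ q(f))$ already witnesses quasi-surjectivity of $(q,g)$ itself. Given $U\in\chi$, let $V\in\phi$ be the element produced for $U$ by quasi-surjectivity of the composite, and set $W:=f(p(V))$. Since $p(V)\in\langle p(\phi)\rangle$ and $f$ satisfies applicative morphism axiom (c), we have $W\in\psi$; and then $\mathsf{N}\cdot g(q(W)) = \mathsf{N}\cdot g(q(f(p(V))))$ is defined and contained in $U$ by hypothesis, completing the argument.
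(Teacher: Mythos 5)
Your proof is correct and follows essentially the same route as the paper: identities witnessed by $\mathsf{I}$, the composite witnessed by a realizer of $\lambda x.\,\mathsf{N}_g\bigl(T_g\cdot g(q(\mathsf{N}_f))\cdot x\bigr)$ using the tracker of $g$ to fuse the two quasi-surjectivity inequalities, and part (ii) by reusing the composite's witness with $W:=f(p(V))\in\psi$. The only quibble is notational: under the paper's left-association convention your written term $\mathsf{N}_g\cdot T_g\cdot g(q(\mathsf{N}_f))\cdot x$ (and the first expression in your inclusion chain) needs the parenthesization $\mathsf{N}_g\bigl(T_g\cdot g(q(\mathsf{N}_f))\cdot x\bigr)$, which is evidently what your subsequent inclusions intend.
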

\begin{proof}
For (i), we only need to show that quasi-surjective applicative morphisms are closed under identities and composition. If $(A,\phi)$ is a PCA, then $\mathsf{I}$ witnesses the quasi-surjectivity of $\id_{(A,\phi)}$. Now suppose that $(A,\phi)\stackrel{(p,f)}{\longrightarrow} (B,\psi)\stackrel{(q,g)}{\longrightarrow} (C,\chi)$ are quasi-surjective applicative morphisms, and let $\mathsf{N}\in\psi$ and $\mathsf{N}'\in\chi$ witness the quasi-surjectivity of $(p,f)$ and $(q,g)$, respectively. Furthermore, let $T\in\chi$ be a tracker of $g$. We claim that any realizer $\mathsf{N}''\in\chi$ of
\[\lambda x.\mathsf{N}'(T\cdot g(q(\mathsf{N}))\cdot x)\]
witnesses the quasisurjectivity of $(q,g)\circ (p,f) = (qp, g\circ q(f))$. Suppose that a $U\in\chi$ is given. Let $V\in\psi$ such that $\mathsf{N}'\cdot g(q(V))\subseteq U$ and let $W\in\phi$ such that $\mathsf{N}\cdot f(p(W))\subseteq V$. Then we also have $q(\mathsf{N})\cdot q(f(p(W)))\subseteq q(V)$ and therefore
\[T\cdot g(q(\mathsf{N}))\cdot g(q(f(p(W))))\subseteq g(q(V)).\]
Since $q$ is a regular functor, we have that \[g(q(f(p(W)))) = g(q(f)(q(p(W)))) = (g\circ q(f))((qp)(W)),\]
so it follows from the above that $\mathsf{N}''\cdot(g\circ q(f))((qp)(W))$ is defined and a subobject of $U$, as desired.

For (ii), let $\mathsf{N}\in\chi$ witness the quasi-surjectivity of $(qp,g\circ q(f))$, and let $U\in\chi$. Then there exists a $V\in\phi$ such that 
\[\mathsf{N}\cdot g(q(f(p(V)))) = \mathsf{N}\cdot (g\circ q(f))((qp)(V))\subseteq U.\]
We know that $f(p(V))\in\psi$, so we conclude that $\mathsf{N}$ also witnesses the quasi-surjectivity of $(q,g)$.
\end{proof}

We constructed the 1-category $\mathsf{PCA}$ in such a way that the forgetful functor $\mathsf{PCA}\to\mathsf{REG}$ is an opfibration. \cref{reindexing=qs} and \cref{PCAqs} yield that the same is true for $\mathsf{PCA}_\text{qs}\to\mathsf{REG}$.

We now proceed to show that, when both these notions apply, computational density and quasi-surjectivity coincide. The proof is an easy adaptation of Johnstone's argument from \cite{johnstone}.

\begin{prop}
Let $(p,f)\colon (A,\phi)\to (B,\psi)$ be an applicative morphism, and suppose that the underlying category $\mathcal{D}$ of $(B,\psi)$ is a topos. Then $(p,f)$ is computationally dense if and only if it is quasi-surjective.
\end{prop}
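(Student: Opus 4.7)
The plan is to prove the two implications separately; the direction CD $\Rightarrow$ QS will be a direct combinator manipulation using the combinator $\mathsf{K}$, while QS $\Rightarrow$ CD is where the topos hypothesis on $\mathcal{D}$ is essential, being used to perform an internal comprehension that is not available in a general regular base category.

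For CD $\Rightarrow$ QS, I would let $\mathsf{M}\in\psi$ witness computational density, take an arbitrary $U\in\psi$, and apply the CD condition to the auxiliary set $U':=\mathsf{K}\cdot U\in\psi$. Since $\mathsf{k}ub=u$ for any $u,b$, one has $U'\cdot f(a)\denotes$ and $U'\cdot f(a)=U$ uniformly in $a\in p(A)$, using inhabitedness of $f(a)$. The CD clause then produces a $V\in\phi$ with $V\cdot A\denotes$ and $\mathsf{M}\cdot f(ra)\subseteq U$ for all $r\in p(V)$ and $a\in p(A)$. Setting $V':=V\cdot A\in\phi$ and using that $p$ preserves application gives $p(V')=p(V)\cdot p(A)$, so $\mathsf{M}\cdot f(p(V'))\subseteq U$. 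Hence $\mathsf{M}$ itself witnesses quasi-surjectivity.

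For QS $\Rightarrow$ CD, the strategy is to use the internal first-order logic of $\mathcal{D}$ to uniformize $U$ into a ``transformer set'', apply QS to it, and then repackage the result. Given $\mathsf{N}\in\psi$ witnessing QS and $U\in\psi$, I would define inside $\mathcal{D}$
\[\tilde{U}:=\{b^*\in B\mid \forall a:p(A),\ U\cdot f(a)\denotes\,\to\,\forall c:B\,(f(a,c)\to b^* c\denotes\wedge\ b^* c\in U\cdot f(a))\}.\]
Since any $u\in U$ visibly satisfies this condition, $U\subseteq\tilde{U}$ and hence $\tilde{U}\in\psi$. Quasi-surjectivity applied to $\tilde{U}$ yields $W\in\phi$ with $\mathsf{N}\cdot f(p(W))\subseteq\tilde{U}$; unpacking, this says that for $r\in p(W)$ and $b_r\in f(r)$, the element $\mathsf{N}b_r$ is a uniform transformer sending each $c\in f(a)$ into $U\cdot f(a)$ whenever the latter is defined. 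I would then take $\mathsf{M}\in\psi$ to be a realizer of $\lambda x.\,\mathsf{N}(\mathsf{P}_0 x)(\mathsf{P}_1 x)$ and $V$ to be a pairing-wrapped modification of $W$ chosen so that $V\cdot A\denotes$ holds and realizers in $f(ra)$ for $r\in p(V)$ take the form of a pair whose first component lies in $f(r)$ and whose second lies in $f(a)$; then $\mathsf{M}\cdot f(ra)=\mathsf{N}(b_r)\cdot c\subseteq U\cdot f(a)$.

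The main obstacle will be that, a priori, a realizer $d\in f(ra)$ need not be of the canonical form $Tb_rc$ produced by a tracker $T\in\psi$ of $f$ acting on $f(r)\times f(a)$, so there may be ``extra'' realizers that $\mathsf{M}$ cannot decompose. I anticipate resolving this by replacing $f$, up to equivalence in $\mathsf{PCA}$, by an equivalent applicative morphism (such as $f'(a,b)\Leftrightarrow\mathsf{P}_0 b\in f(a)$, with tracker $\lambda xy.\,\mathsf{P}(T(\mathsf{P}_0 x)(\mathsf{P}_0 y))(\mathsf{P}xy)$) in which every realizer canonically encodes its decomposition via the pairing combinators. This replacement is legitimate precisely because $\mathcal{D}$ is a topos and one can form the required subobjects by first-order comprehension, just as for $\tilde{U}$ above. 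With this canonicalization in place, the verification that the chosen $\mathsf{M}$ and $V$ satisfy the CD condition reduces to routine manipulations with the pairing and unpairing combinators, closely following Johnstone's argument in \cite{johnstone}.
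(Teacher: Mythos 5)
Your first direction (CD $\Rightarrow$ QS) is correct and is essentially the paper's argument up to a harmless variation: the paper keeps the $V$ supplied by computational density and instead upgrades the witness to a realizer of $\lambda x.\mathsf{M}(T\cdot x\cdot f(p(A)))$ (with $T$ a tracker of $f$), whereas you absorb $A$ into $V$ (using $A\in\phi$, $V\!A\denotes$ and closure of $\phi$ under application) and keep $\mathsf{M}$ itself as the witness; both are fine.

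The converse, however, has a genuine gap exactly at the point you flag. Your $\mathsf{M}$, a realizer of $\lambda x.\mathsf{N}(\mathsf{P}_0x)(\mathsf{P}_1x)$, only does its job if every element of $f(ra)$ is literally a pair of an element of $f(p(W))$ and an element of $f(a)$, which fails for a general applicative morphism. The repair you propose, replacing $f$ up to equivalence by $f'(a,b)\Leftrightarrow\mathsf{P}_0b\in f(a)$, does not produce such decompositions: a realizer of $ra$ under $f'$ is a pair whose \emph{first} component lies in $f(ra)$ --- the very set you could not decompose --- and whose second component is arbitrary, so nothing is gained; and in any case you would also need to show that computational density is invariant under equivalence of applicative morphisms, which you do not address. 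The correct move (the paper's, adapting \cite{johnstone}) is to do the pairing on the $A$-side and the unpairing on the $B$-side via the tracker of $f$: apply quasi-surjectivity to $U$ itself to get $V$ with $\mathsf{N}\cdot f(p(V))\subseteq U$, take the computational-density witness set to be $W:=\mathsf{P}V$ (so that for $r\in p(W)$ the element $ra$ \emph{is} a pair with components in $p(V)$ and $\{a\}$), and let $\mathsf{M}$ realize $\lambda x.\mathsf{N}(T\cdot f(\mathsf{P}_0)\cdot x)(T\cdot f(\mathsf{P}_1)\cdot x)$, where $T\in\psi$ tracks $f$. Then for an \emph{arbitrary} element of $f(ra)$ one has $T\cdot f(\mathsf{P}_0)\cdot f(ra)\subseteq f(p(\mathsf{P}_0)\cdot(ra))\subseteq f(p(V))$ and $T\cdot f(\mathsf{P}_1)\cdot f(ra)\subseteq f(a)$, hence $\mathsf{M}\cdot f(ra)\subseteq U\cdot f(a)$ whenever $U\cdot f(a)\denotes$; no canonical form of realizers is required. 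Your auxiliary comprehension $\tilde U$ is correct but unnecessary (quasi-surjectivity can be applied to $U$ directly), and beyond making the statement of computational density meaningful the topos structure of $\mathcal{D}$ plays no further role in this direction.
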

\begin{proof}
First, suppose that $(p,f)$ is computationally dense, witnessed by $\mathsf{M}\in\psi$. Let $T$ track $f$ and let $\mathsf{N}\in\psi$ realize
\[\lambda x.\mathsf{M}(T\cdot x\cdot f(p(A))).\]
Suppose that $U\in\psi$. Then $\mathsf{K}U\in\psi$ as well. Let $V\in\phi$ such that $V\!A\denotes$ and 
\[\mathcal{D}\models \forall r,a:p(A)\s(p(V)(r)\wedge \mathsf{K}U\cdot f(a)\denotes\ \to \mathsf{M}\cdot f(ra)\subseteq \mathsf{K}U\cdot f(a)).\]
Now reason inside $\mathcal{D}$ and let $a\in p(A)$ be arbitrary. Then $\mathsf{K}U\cdot f(a)\denotes$ always holds. It follows that for all $r\in p(A)$ and $m,b\in B$: if $p(V)(r)$, $\mathsf{M}(m)$ and $f(ra,b)$, then $mb\denotes$ and $mb\in U$. From this, we can conclude (externally) that $\mathsf{M}\cdot f(p(V)\cdot p(A))\denotes$ and $\mathsf{M}\cdot f(p(V)\cdot p(A))\subseteq U$. This yields
\[\mathsf{N}\cdot f(p(V)) \subseteq \mathsf{M}(T \cdot f(p(V))\cdot f(p(A))) \subseteq \mathsf{M}\cdot f(p(V)\cdot p(A)) \subseteq U,\]
as desired.

For the converse, suppose that $(p,f)$ is quasi-surjective, witnessed by $\mathsf{N}\in\psi$. Again, let $T\in\psi$ track $f$, and consider a realizer $\mathsf{M}\in\psi$ of
\[\lambda x.\mathsf{N}(T\cdot f(\mathsf{P}_0)\cdot x)(T\cdot f(\mathsf{P}_1)\cdot x).\]
Let $U\in\psi$, and find a $V\in\phi$ such that $\mathsf{N}\cdot f(p(V)) \subseteq U$. We show that $W:=\mathsf{P} V$ has the required properties. First of all, we observe that $\mathsf{P}V\!A$ is defined. Moreover, $p(W) = p(\mathsf{P})\cdot p(V)$, and as we remarked earlier, $p(\mathsf{P})$ is a pairing combinator w.r.t.\@ $p(A)$. Now we reason internally in $\mathcal{D}$, let $r,a\in p(A)$ be arbitrary and suppose that $p(V)(r)$ and $U\cdot f(a)\denotes$.  Since $ra\in p(W)\cdot a = p(\mathsf{P})\cdot p(V)\cdot a$, we see that
\[T\cdot f(\mathsf{P}_0)\cdot f(ra) \subseteq f(\mathsf{P}_0\cdot (ra))\subseteq f(p(V))\quad\mbox{and}\quad T\cdot f(\mathsf{P}_1)\cdot f(ra) \subseteq f(\mathsf{P}_1\cdot(ra))\subseteq f(a).\]
This means that $\mathsf{N}(T\cdot f(\mathsf{P}_0)\cdot f(ra))\subseteq \mathsf{N}\cdot f(p(V))\subseteq U$, so we conclude that $\mathsf{M}\cdot f(ra)\denotes$ and
\[\mathsf{M}\cdot f(ra) \subseteq\mathsf{N}(T\cdot f(\mathsf{P}_0)\cdot f(ra))(T\cdot f(\mathsf{P}_1)\cdot f(ra)) \subseteq U\cdot f(a),\]
as desired.
\end{proof}

Now we turn to answering the question posed earlier in this section, under which conditions the functor $\asm(p^\ast,f)$ has a right adjoint.
\begin{thm}\label{cd_is_rightadjoint}
Let $(p,f)\colon (A,\phi)\to (B,\psi)$ be an applicative morphism, and suppose that the underlying category $\mathcal{D}$ of $(B,\psi)$ is a topos.
\begin{itemize}
\item[\textup{(}i\textup{)}]	If $p$ has a right adjoint and $(p,f)$ is computationally dense, then $\asm(p,f)$ has a right adjoint as well.
\item[\textup{(}ii\textup{)}]	If $\asm(p,f)$ has a right adjoint, then $(p,f)$ is computationally dense.
\end{itemize}
\end{thm}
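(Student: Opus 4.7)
Plan. The theorem is a direct generalization to the present framework of the Hofstra--van Oosten / Johnstone characterization of right adjoints for computationally dense applicative morphisms. I will reproduce their proof, with the base-change $p \dashv q$ and the topos structure on $\mathcal{D}$ replacing the set-theoretic content of the classical argument.

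For (i), I construct a right adjoint $R \colon \asm(B,\psi) \to \asm(A,\phi)$ to $F := \asm(p,f)$. Given $Y \in \asm(B,\psi)$ and the CD witness $\mathsf{M} \in \psi$, define $|RY| \mono q(|Y|)$ as the subobject carved out by the condition (formulated internally in the topos $\mathcal{D}$ and then transposed to $\mathcal{C}$ via $p \dashv q$) that there exists $a \in A$ with $\mathsf{M} \cdot f(p(a)) \denotes$ and $\mathsf{M} \cdot f(p(a)) \subseteq E_Y(\epsilon_{|Y|}(p(y')),-)$. Equip $|RY|$ with the relation $E_{RY}(y', a)$ asserting that this realization condition holds for the given $a$; totality over $|RY|$ is built in. For a morphism $k \colon Y \to Y'$ tracked by $W \in \psi$, the underlying arrow of $Rk$ is the restriction of $q(k)$, and the CD axiom applied to a $\psi$-element that composes $W$ with $\mathsf{M}$ produces the $V \in \phi$ with $V A \denotes$ that tracks $Rk$. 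The counit $\varepsilon_Y \colon F R Y \to Y$ has underlying arrow (the restriction of) $\epsilon_{|Y|}$ and is tracked by $\mathsf{M}$ itself, essentially by reading off the definition of $E_{RY}$. The unit $\eta_X \colon X \to R F X$ has underlying arrow $\eta_{|X|}$, which factors through $|RFX|$ by CD applied to the identity combinator in $\psi$; the same CD-produced $V_{\mathsf{I}} \in \phi$ tracks $\eta_X$. The triangle identities reduce to routine combinator calculations.

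For (ii), assume $R$ is right adjoint to $F$, and let $\mathbf{B} := (B, \delta_B)$ and $\mathbf{A} := (A, \delta_A)$ be the objects of realizers in $\asm(B,\psi)$ and $\asm(A,\phi)$ respectively. The counit $\varepsilon_\mathbf{B} \colon F R \mathbf{B} \to \mathbf{B}$ admits a tracker $\mathsf{M} \in \psi$, which I claim witnesses CD. Given $U \in \psi$, consider the prone subobject $\mathbf{U} \mono \mathbf{B}$ with $|\mathbf{U}| = U \mono B$ in $\mathcal{D}$, and construct a canonical morphism $m_U \colon F \mathbf{A} \to \mathbf{U}$ from the action of $U$ on $f$-realizers; it is tracked by an appropriate composite of $U$ with the tracker of $f$. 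Its adjoint transpose is a morphism $\mathbf{A} \to R \mathbf{U}$ whose tracker $V \in \phi$ is, because $E_\mathbf{A} = \delta_A$, automatically forced to satisfy $V A \denotes$. Naturality of $\varepsilon$ at $\mathbf{U} \mono \mathbf{B}$, together with the fact that $\mathsf{M}$ tracks $\varepsilon_\mathbf{B}$, then unpacks exactly into the CD implication $p(V)(r) \wedge U \cdot f(a) \denotes \to \mathsf{M} \cdot f(ra) \subseteq U \cdot f(a)$.

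The hard part will be (ii): identifying a test assembly and source morphism whose adjoint-transpose tracker matches the precise shape of the CD axiom on the nose, with both its antecedent $U \cdot f(a)\denotes$ and its conclusion about subsets of $U \cdot f(a)$. A technical subtlety in (i) is that $\mathcal{C}$ is only regular, so the subobject $|RY|$ cannot be defined by power-object constructions in $\mathcal{C}$ itself; the construction instead formulates the defining condition in the topos $\mathcal{D}$ and uses $q$ to pull it back to a subobject of $q(|Y|)$ in $\mathcal{C}$, which is one of the places where the hypothesis that $\mathcal{D}$ is a topos is essential.
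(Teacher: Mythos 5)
Your part (i) is essentially the paper's construction: the right adjoint is built by forming, internally in $\mathcal{D}$, the relation ``$\mathsf{M}\cdot f(a)\denotes$ and $\mathsf{M}\cdot f(a)\subseteq E_Y(y,-)$'', transporting it to $\mathcal{C}$ along $q$ and the unit of $p\dashv q$, restricting to the subobject of $q(|Y|)$ where it is inhabited, and then tracking $Rk$, the unit, and the counit exactly as you describe (via computational density applied to $\lambda x.W(\mathsf{M}x)$, to $\mathsf{I}$, and via $\mathsf{M}$ itself). No complaints there.

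Part (ii) has a genuine gap, concentrated exactly where you flagged it. First, the test morphism $m_U\colon F\mathbf{A}\to\mathbf{U}$ does not exist as described: its underlying datum would have to be an actual arrow $p(A)\to U$ in $\mathcal{D}$, but ``the action of $U$ on $f$-realizers'' is the assignment $a\mapsto U\cdot f(a)$, which is only a partial, multi-valued relation with values in $B$ (not in $U$), so there is no canonical arrow to take. Second, and more structurally, even a repaired morphism into the \emph{fixed} prone subobject $\mathbf{U}\mono\mathbf{B}$ could only ever yield a conclusion of the form $\mathsf{M}'\cdot f(ra)\subseteq U$, because the realizability data of $\mathbf{U}$ knows nothing about the $a$-dependent target $U\cdot f(a)$ appearing in the computational-density axiom; your claimed unpacking into $p(V)(r)\wedge U\cdot f(a)\denotes\to\mathsf{M}\cdot f(ra)\subseteq U\cdot f(a)$ cannot come out of naturality at $\mathbf{U}\mono\mathbf{B}$. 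Third, your uniform witness is the tracker of $\varepsilon_{\mathbf{B}}$, but relating $\varepsilon_{\mathbf{U}}$ to $\varepsilon_{\mathbf{B}}$ requires composing with $FR(\iota_U)$, whose tracker depends on $U$, so uniformity in $U$ is not actually secured.

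The paper resolves all three points at once by proving \emph{quasi-surjectivity} rather than computational density directly, and then invoking the separately established equivalence of the two notions (whose proof is where the $a$-dependence of $U\cdot f(a)$ is recovered, using pairing to carry $a$ along). The single test assembly is $S$ with $|S|$ the object of inhabited subobjects of $B$ and $E_S$ the element relation --- this is the one place where the power object of the topos $\mathcal{D}$ is genuinely needed. Each $U\in\psi$ is then a global section $1\to S$ tracked by $\mathsf{K}U$, its transpose $\tilde U\colon 1\to GS$ yields $V=\{a\in A\mid E_{GS}(\tilde U,a)\}\in\phi$, and the \emph{one} tracker $\mathsf{N}$ of the \emph{one} counit component $\varepsilon_S$ satisfies $\mathsf{N}\cdot f(p(V))\subseteq U$ uniformly in $U$, because $U$ varies as a point of $S$ rather than as a different assembly each time. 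If you want to complete your route, you should replace $\mathbf{U}$ and $m_U$ by this power-object assembly and the global sections it classifies; pushed through, your argument converges to the paper's.
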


We will first embark on the proof of (i), which is rather involved. The proof of (ii) can be found on page \pageref{proofii}.

\begin{proof}[Proof of \cref{cd_is_rightadjoint}\textup{(}i\textup{)}]
Let $\mathsf{M}\in\psi$ witness the computational density of $(p,f)$, and suppose that $p$ has a right adjoint $q\colon\mathcal{D}\to\mathcal{C}$, where $\mathcal{C}$ is the underlying category of $(A,\phi)$. We denote the unit and counit of the adjunction by $\eta$ and $\varepsilon$ respectively. For the sake of readability, we write $F$ for $\asm(p,f)$. We define its right adjoint $G$.

Let $X$ be an assembly over $(B,\psi)$. First of all, we define
\[E'_X = \{(x,a)\in |X|\times p(A)\mid \forall m,b:B\s (\mathsf{M}(m)\wedge f(a,b)\to mb\denotes\wedge\ E_X(x,mb))\}.\]
Then $q(E'_X)\subseteq q(|X|)\times qp(A)$, and we let
\[E'_{GX} = (\id\times\eta_A)^\ast(q(E'_X)) = \{(x,a)\in q(|X|)\times A\mid q(E'_X)(x,\eta_A(a))\}\subseteq q(|X|)\times A.\]
Now we define the assembly $GX$ over $(A,\phi)$ by setting 
\[|GX| = \{x\in q(|X|)\mid \exists a: A\s (E'_{GX}(x,a))\}\subseteq q(|X|),\]
and by letting $E_{GX}$ be the restriction of $E'_{GX}$ to $|GX|\times A$.

Before we continue, we first formulate the following lemma.
\begin{lem}
For every assembly $X$ over $(B,\psi)$, there is a commutative diagram:
\begin{equation}\label{useful_diagram}
\begin{tikzcd}[column sep=large]
pE'_{GX} \arrow[r] \arrow[d, hook]                 & E'_X \arrow[d, hook] \\
pq|X|\times p(A) \arrow[r, "\varepsilon\times\id"] & {|X|\times p(A)}      
\end{tikzcd}
\end{equation}
\end{lem}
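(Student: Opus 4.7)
The plan is to construct the top horizontal arrow as a composite of two canonical maps, using the pullback description of $E'_{GX}$ and the counit $\varepsilon$ of the adjunction $p\dashv q$, and then reduce the commutativity of the outer square to the triangle identity.

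First, I would unpack the defining formula $E'_{GX} = (\id\times\eta_A)^\ast(q(E'_X))$, which states exactly that $E'_{GX}$ is the pullback of the mono $q(E'_X)\hookrightarrow q|X|\times qp(A)$ along the map $\id_{q|X|}\times\eta_A\colon q|X|\times A\to q|X|\times qp(A)$. Since $p$ is regular, it preserves this pullback (and monos), producing a commutative square whose top edge $\alpha_1\colon p(E'_{GX})\to pq(E'_X)$ lies over $\id_{pq|X|}\times p(\eta_A)\colon pq|X|\times p(A)\to pq|X|\times pqp(A)$.

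Second, I would appeal to the naturality of $\varepsilon\colon pq\To\id$ applied to the mono $E'_X\hookrightarrow|X|\times p(A)$, together with left-exactness of $pq$ (so that $pq(|X|\times p(A))=pq|X|\times pqp(A)$ and the counit at this product is $\varepsilon_{|X|}\times\varepsilon_{p(A)}$). This yields a second commutative square with top edge $\varepsilon_{E'_X}\colon pq(E'_X)\to E'_X$ sitting over $\varepsilon_{|X|}\times\varepsilon_{p(A)}$. I would then define the top arrow of the target square to be the composite $\varepsilon_{E'_X}\circ\alpha_1\colon p(E'_{GX})\to E'_X$.

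Pasting these two squares horizontally, the bottom edge of the resulting outer rectangle is the composite $(\varepsilon_{|X|}\times\varepsilon_{p(A)})\circ(\id\times p\eta_A)=\varepsilon_{|X|}\times(\varepsilon_{p(A)}\circ p\eta_A)$. By the triangle identity for $p\dashv q$, $\varepsilon_{p(A)}\circ p(\eta_A)=\id_{p(A)}$, so this reduces to $\varepsilon_{|X|}\times\id_{p(A)}=\varepsilon\times\id$, which is precisely the bottom arrow of the diagram \eqref{useful_diagram}. The only real obstacle is bookkeeping: correctly identifying $E'_{GX}$ as a pullback and tracking the product decompositions through the left-exact functors $p$ and $pq$; once this is done, the diagram chase is forced by the triangle identity.
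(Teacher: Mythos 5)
Your proof is correct and is essentially identical to the paper's: both identify $E'_{GX}$ as the pullback of $q(E'_X)$ along $\id\times\eta_A$, apply the left-exact functor $p$ to get the first square, paste it with the naturality square of $\varepsilon$ at the mono $E'_X\hookrightarrow|X|\times p(A)$, and collapse the bottom composite via the triangle identity. No gaps.
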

\begin{proof}
The object $E'_{GX}$ is defined by the pullback diagram
\[
\begin{tikzcd}
E'_{GX} \arrow[d, hook] \arrow[r]       & qE'_X \arrow[d, hook] \\
q|X|\times A \arrow[r, "\id\times\eta"] & q|X|\times qpA       
\end{tikzcd}
\]
This means that we can obtain the diagram \eqref{useful_diagram} by pasting the following squares:
\begin{equation}\label{decomp_useful}
\begin{tikzcd}[column sep=large]
pE'_{GX} \arrow[d, hook] \arrow[r]        & pqE'_X \arrow[d, hook] \arrow[r, "\varepsilon"]            & E'_X \arrow[d, hook] \\
pq|X|\times pA \arrow[r, "\id\times p\eta"] & pq|X|\times pqpA \arrow[r, "\varepsilon\times\varepsilon"] & {|X|\times pA}        
\end{tikzcd}
\end{equation}
and using the triangle identity for the bottom composition.
\end{proof}

Now suppose that $g\colon X\to Y$ is an arrow in $\asm(B,\psi)$, tracked by $W\in\psi$. Let $U \in \psi$ be a realizer of $\lambda x.W(\mathsf{M}x)$, and find a $V\in \phi$ such that $V\!A\denotes$ and
\[\mathcal{D}\models \forall r,a: p(A)\s(p(V)(r)\wedge U\cdot f(a)\denotes\ \to \mathsf{M}\cdot f(ra)\subseteq U\cdot f(a)).\]
We claim that
\begin{align}\label{goal}
\mathcal{C}: V(r)\wedge E'_{GX}(x,a)\models_{x:q(|X|); r,a:A} E'_{GY}(q(g)(x),ra).
\end{align}
To this end, we first prove that
\begin{align}\label{step}
\mathcal{D}: p(V)(r)\wedge E'_X(x,a)\models_{x:|X|;r,a:p(A)} E'_Y(g(x),ra).
\end{align}
Reason inside $\mathcal{D}$ and suppose that we have $x\in |X|$ and $r,a\in p(A)$ such that $p(V)(r)$ and $E'_X(x,a)$. If we have $m,b\in B$ such that $\mathsf{M}(m)$ and $f(a,b)$, then $mb\denotes$ and $E_X(x,mb)$. So if $s\in W$, then $s(mb)\denotes$ as well, and $E_Y(g(x),s(mb))$. This means that $U\cdot f(a)\denotes$, and every $c\in U\cdot f(a)$ satisfies $E_Y(g(x),c)$. Now suppose that $m',b'\in B$ such that $\mathsf{M}(m')$ and $f(ra,b')$ are given. By the property of $V$, we know that $m'b'\denotes$ and $m'b'\in U\cdot f(a)$, which implies that $E_Y(g(x),m'b')$. From this, we can conclude that $E'_Y(g(x),ra)$, which proves \eqref{step}.

Now we obtain a commutative diagram
\[\begin{tikzcd}[column sep=large]
pV\times pE'_{GX} \arrow[d, hook] \arrow[r]                                                                     & pV\times E'_X \arrow[d, hook] \arrow[r] & E'_Y \arrow[d, hook] \\
pV\times pq|X|\times pA \arrow[r, "\id\times\varepsilon\times\id"] &                                                            pV\times |X|\times pA \arrow[r, "\ast"]          & {|Y|\times pA}                 
\end{tikzcd}\]
in $\mathcal{D}$, where $\ast$ is the arrow sending $(r,x,a)$ to $(g(x),ra)$. Indeed, the left-hand square exists by diagram \eqref{useful_diagram}, and the right-hand square expresses \eqref{step}. Transposing this diagram yields the diagram
\[\begin{tikzcd}
V\times E'_{GX} \arrow[r] \arrow[d, hook] & qE'_Y \arrow[d, hook] \\
V\times q|X|\times A \arrow[r, "\ast\ast"] & q|Y|\times qpA          
\end{tikzcd}\]
in $\mathcal{C}$, where $\ast\ast$ is the arrow sending $(r,x,a)$ to $(q(g)(x),\eta_A(ra))$. (Observe that, since $V\!A\denotes$, we know that the application map $p(V)\times p(A)\to p(A)$ is the image of the application map $V\times A\to A$ under $p$.) This diagram tells us that
\[\mathcal{C}:V(r)\wedge E'_{GX}(x,a)\models_{x:q(|X|); r,a:A} E'_Y(q(g)(x),\eta_A(ra)).\]
from which \eqref{goal} immediately follows. Since $V$ is inhabited, \eqref{goal} implies that
\[\mathcal{C}: \exists a: A\s (E'_{GX}(x,a))\models_{x:q(|X|)} \exists a: A\s (E'_{GY}(q(g)(x),a)),\]
which means that $q(g)$ restricts to an arrow $G(g)\colon |GX|\to |GY|$. Moreover, \eqref{goal} implies that $V$ tracks $G(g)$ as a morphism $GX\to GY$. It is immediate that $G$ is a functor, so it remains to show that $F\dashv G$.

Since $\mathsf{I}\in\psi$, there exists a $V\in\phi$ such that $V\!A\denotes$ and
\[\mathcal{D}\models \forall r,a: p(A)\s(p(V)(r)\wedge \mathsf{I}\cdot f(a)\denotes\ \to \mathsf{M}\cdot f(ra)\subseteq \mathsf{I}\cdot f(a)),\]
which may be simplified to
\[\mathcal{D}\models \forall r,a: p(A)\s(p(V)(r)\to\mathsf{M}\cdot f(ra)\subseteq f(a)).\]
We claim that for every assembly $X\in\asm(A,\phi)$:
\begin{align}\label{goal2}
\mathcal{D}: p(V)(r) \wedge p(E_X)(x,a)\models_{x:p(|X|); r,a:p(A)} E'_{FX}(x,ra).
\end{align}
Indeed, reason inside $\mathcal{D}$ and suppose that we have $x\in p(|X|)$ and $r,a\in p(A)$ such that $p(V)(r)$ and $p(E_X)(x,a)$. Consider $m,b\in B$ such that $\mathsf{M}(m)$ and $f(ra,b)$. Then by the property of $V$, we know that $mb\denotes$ and $f(a,mb)$. Since $p(E_X)(x,a)$, this implies that $E_{FX}(x,mb)$, so we can conclude that $E'_{FX}(x,ra)$, which proves \eqref{goal2}.

The validity of \eqref{goal2} can be expressed by a diagram
\[\begin{tikzcd}
pV\times pE_X \arrow[r] \arrow[d, hook]  & E'_{FX} \arrow[d, hook] \\
pV\times p|X|\times pA \arrow[r, "\ast"] & p|X|\times pA           
\end{tikzcd}\]
in $\mathcal{D}$, where $\ast$ is the arrow sending $(r,x,a)$ to $(x,ra)$. Transposing this diagram yields a diagram
\[\begin{tikzcd}
V\times E_X \arrow[r] \arrow[d, hook]     & qE'_{FX} \arrow[d, hook] \\
V\times |X|\times A \arrow[r, "\ast\ast"] & qp|X|\times qpA          
\end{tikzcd}\]
in $\mathcal{C}$, where $\ast\ast$ is the arrow sending $(x,r,a)$ to $(\eta_{|X|}(x),\eta_A(ra))$. This diagram implies that
\begin{align}\label{goal22}
\mathcal{C}: V(r) \wedge E_X(x,a)\models_{x:|X|;r,a:A} E'_{GFX}(\eta_{|X|}(x),ra)).
\end{align}
Since $V$ is inhabited and $E_X$ is total, \eqref{goal22} implies that the sequent \[\vdash_{x:|X|}\exists a:A\s(E'_{GFX}(\eta_{|X|}(x),a))\] is valid in $\mathcal{C}$, i.e., the image of $\eta_{|X|}\colon |X|\to qp(|X|) = q(|FX|)$ is contained in $|GFX|$. So we have an arrow $\tilde{\eta}_X\colon |X|\to |GFX|$, and \eqref{goal22} tells us that $V$ tracks it as a morphism $X\to GFX$. The naturality of $\eta$ implies that $\tilde{\eta}$ is natural transformation $\id\To GF$.

Now consider an assembly $X\in\asm(B,\psi)$. We know that $|FGX| = p(|GX|)\subseteq pq(|X|)$, so $\varepsilon_{|X|}\colon pq(|X|)\to |X|$ restricts to an arrow $\tilde{\varepsilon}_X\colon |FGX|\to |X|$. We will show that $\mathsf{M}$ tracks $\tilde{\varepsilon}_X$ as a morphism $FGX\to X$. To this end, reason inside $\mathcal{D}$ and suppose we have $x\in |FGX| = p(|GX|)$ and $m,b\in B$ such that $\mathsf{M}(m)$ and $E_{FGX}(x,b)$. Then there exists an $a\in p(A)$ such that $p(E_{GX})(x,a)$ and $f(a,b)$. This also implies that $p(E'_{GX})(x,a)$ and since we have the diagram \eqref{useful_diagram}, it follows that $E'_X(\varepsilon_{|X|}(x),a)$. Since $f(a,b)$, this means that $mb\denotes$, and $E_X(\varepsilon_{|X|}(x),mb)$, as desired. We conclude that $\tilde{\varepsilon}_X$ is a morphism $FGX\to X$, and that $\tilde{\varepsilon}$ is a natural transformation $FG\To\id$. Moreover, the triangle equalities for $\varepsilon$ and $\eta$ yield that the triangle equalities  hold for $\tilde{\varepsilon}$ and $\tilde{\eta}$ as well, so $F\dashv G$.
\end{proof}

The theory developed above yields a succinct proof of the following result.
\begin{cor}
Let $(A,\phi)$ be a PCA over a topos $\mathcal{C}$. Then $\asm(A,\phi)$ is locally cartesian closed.
\end{cor}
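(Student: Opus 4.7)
The plan is to show that every pullback functor in $\asm(A,\phi)$ admits a right adjoint, using the slicing equivalence together with \cref{cd_is_rightadjoint}. Since $\asm(A,\phi)$ has finite limits by \cref{asm_reg}, this suffices for local cartesian closure.

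First I would observe that, since $\mathcal{C}$ is a topos, every slice $\mathcal{C}/|I|$ is again a topos, and hence the pullback functor $f^\ast\colon\mathcal{C}/|J|\to \mathcal{C}/|I|$ along any arrow $f\colon|I|\to|J|$ in $\mathcal{C}$ has a right adjoint. Next, given a morphism $f\colon I\to J$ in $\asm(A,\phi)$, I would invoke \cref{pullback_asm} to identify, modulo the slicing equivalence of \cref{slice}, the pullback functor $f^\ast\colon\asm(A,\phi)/J\to\asm(A,\phi)/I$ with the induced functor $\asm(f^\ast,\delta_{|I|^\ast(A)})\colon\asm((A,\phi)/J)\to\asm((A,\phi)/I)$, where $(f^\ast,\delta_{|I|^\ast(A)})$ is the applicative morphism from $(A,\phi)/J=(|J|^\ast(A),\phi_J)$ to $(A,\phi)/I=(|I|^\ast(A),\phi_I)$ built from the topos-pullback $f^\ast$.

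Then I would verify that this applicative morphism is quasi-surjective: indeed, \cref{pullback=qs} provides exactly this for the morphisms of the form appearing in \cref{pullback_asm}. Because the target topos $\mathcal{C}/|I|$ carries a full first-order internal logic, quasi-surjectivity coincides with computational density by the equivalence established just before \cref{cd_is_rightadjoint}. Hence $(f^\ast,\delta_{|I|^\ast(A)})$ is computationally dense, and its underlying regular functor $f^\ast$ between toposes has a right adjoint. \cref{cd_is_rightadjoint}(i) then yields a right adjoint for $\asm(f^\ast,\delta_{|I|^\ast(A)})$, and transporting along the equivalence of \cref{slice} gives the desired right adjoint to $f^\ast\colon\asm(A,\phi)/J\to\asm(A,\phi)/I$.

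Since $f$ was an arbitrary morphism of $\asm(A,\phi)$, each pullback functor has a right adjoint, so $\asm(A,\phi)$ is locally cartesian closed. The only subtle point is making sure the correspondences between pullback functors and applicative morphisms established in \cref{pullback_asm} respect the equivalence of \cref{slice}, so that the right adjoint produced by \cref{cd_is_rightadjoint}(i) really is (up to natural isomorphism) a right adjoint to the honest slice-categorical pullback; this is already built into \cref{pullback_asm}, so no new work is needed.
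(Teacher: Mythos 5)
Your argument is correct and follows essentially the same route as the paper: identify the slice pullback functor with $\asm(f^\ast,\delta_{|I|^\ast(A)})$ via \cref{pullback_asm} and \cref{slice}, use \cref{pullback=qs} (together with the equivalence of quasi-surjectivity and computational density over a topos) and the existence of $\Pi_f$ in the topos $\mathcal{C}$, and then apply \cref{cd_is_rightadjoint}(i). Your explicit remark that quasi-surjectivity must be converted to computational density is a point the paper glosses over slightly, but it is the same proof.
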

\begin{proof}
Let $f\colon I\to J$ be a morphism of assemblies. As we showed in \cref{pullback=qs}, the applicative morphism $(f^\ast,\delta_{|I|^\ast(A)})\colon (A,\phi)/J\to (A,\phi)/I$ is computationally dense. Moreover, since any topos is locally cartesian closed, we know that the pullback functor $f^\ast\colon\mathcal{C}/|J|\to\mathcal{C}/|I|$ has a right adjoint $\Pi_f$. By \cref{cd_is_rightadjoint}(i), the functor $\asm(f^\ast,\delta_{|I|^\ast(A)})\colon \asm((A,\phi)/J)\to \asm((A,\phi)/I)$ has a right adjoint as well. In \cref{pullback_asm}, we observed that, under the equivalences of \cref{slice}, this functor is naturally isomorphic to the pullback functor $f^\ast\colon \asm(A,\phi)/J\to\asm(A,\phi)/I$. We conclude that this pullback functor always has a right adjoint, i.e., that $\asm(A,\phi)$ is locally cartesian closed.
\end{proof}

Now let us finally also give the proof of \cref{cd_is_rightadjoint}(ii)
\begin{proof}[Proof of \cref{cd_is_rightadjoint}\textup{(}ii\textup{)}]\label{proofii}
Again, let $\mathcal{C}$ be the underlying category of $(A,\phi)$, and write $F$ for $\asm(p,f)$. Suppose we have an adjunction $F\dashv G$ with counit $\varepsilon$. Consider the assembly $S\in\asm(B,\psi)$, where $|S|$ is the object of inhabited subobjects of $B$, and $E_S\subseteq |S|\times B$ is the element relation. Let $\mathsf{N}\in\psi$ be a tracker of $\varepsilon_S\colon FGS\to S$; we claim that $\mathsf{N}$ also witnesses the quasi-surjectivity of $(p,f)$.

Suppose that $U\in\psi$. Then the global section $U\colon 1\to |S|$ is also a morphism $1\to S$, since it is tracked by $\mathsf{K}U$. Since $F1\simeq 1$, this morphism can be transposed to an arrow $\tilde{U}\colon 1\to GS$ of $\asm(A,\phi)$. Then $F\tilde{U}$ is a global section $1\simeq F1 \to FGS$, and by the adjunction, we have $\varepsilon_S(F\tilde{U}) = U$.

Now take $V = \{a\in A\mid E_{GS}(\tilde{U},a)\}$. If $W$ tracks $\tilde{U}$, then $WA$ is defined and a subobject of $V$, which implies that $V\in\phi$. Then $f(p(V)) = \{b\in B\mid E_{FGS}(F\tilde{U},b)\}$, which means that $\mathsf{N}\cdot f(p(V))$ is defined and a subobject of 
\[\{b\in B\mid E_S(\varepsilon_S(F\tilde{U}),b)\} = \{b\in B\mid E_S(U,b)\} = U,\] as desired.
\end{proof}

Putting the above together, we get the following.
\begin{cor}
Suppose that $p\colon \mathcal{D}\to\mathcal{C}$ is a geometric morphism between toposes, and that $(p^\ast,f)\colon (A,\phi)\to(B,\phi)$ is an applicative morphism. Then $\asm(p,f)$ has a right adjoint if and only if $(p,f)$ is computationally dense.
\end{cor}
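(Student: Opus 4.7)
The plan is to observe that this corollary is essentially an immediate restatement of \cref{cd_is_rightadjoint} under the additional hypothesis that the regular functor $p^\ast$ arises as the inverse image of a geometric morphism. The single ingredient to spell out is that a geometric morphism $p\colon\mathcal{D}\to\mathcal{C}$ comes by definition with an adjunction $p^\ast\dashv p_\ast$ in which $p^\ast$ is finite-limit-preserving, and hence in particular regular. So the regular functor appearing in our applicative morphism automatically admits a right adjoint, for free.

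With this in hand, the forward direction is handled by \cref{cd_is_rightadjoint}(i): its two hypotheses are precisely that $p^\ast$ has a right adjoint (supplied by $p_\ast$) and that the underlying category $\mathcal{D}$ of $(B,\psi)$ is a topos (given by assumption). Therefore, if $(p^\ast,f)$ is computationally dense, then $\asm(p^\ast,f)$ has a right adjoint. For the reverse direction, I would invoke \cref{cd_is_rightadjoint}(ii) directly; its sole hypothesis is again that $\mathcal{D}$ is a topos, which holds, and no use is made there of the adjoint $p_\ast$.

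There is no real obstacle to overcome: all the analytical content has already been absorbed into \cref{cd_is_rightadjoint}. The corollary is therefore little more than a packaging of that theorem into the geometric-morphism setup that is natural when both base categories are toposes, together with the remark that in this setup the right-adjoint hypothesis of part (i) is automatic.
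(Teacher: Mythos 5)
Your proposal is correct and is exactly the paper's argument: the corollary is stated there with no separate proof ("Putting the above together"), since the inverse image $p^\ast$ of a geometric morphism is regular and automatically has the right adjoint $p_\ast$, so both directions follow at once from \cref{cd_is_rightadjoint}(i) and (ii) with $\mathcal{D}$ a topos.
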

We can also formulate this result in another way. We may generalize the notion of a geometric morphism to include any adjunction between left exact categories for which the left adjoint is left exact. In this way, the adjunction $\Gamma\dashv \nabla$ becomes a geometric morphism $\mathcal{C}\to\asm(A,\phi)$, which is even an inclusion in the sense that $\Gamma\nabla\simeq\id$.
\begin{cor}
Let $p\colon \mathcal{D}\to\mathcal{C}$ be a geometric morphism between toposes, and let $(A,\phi)$ and $(B,\psi)$ be PCAs over $\mathcal{C}$ and $\mathcal{D}$ respectively. If $(p^\ast,f)\colon (A,\phi)\to(B,\psi)$ is a computationally dense applicative morphism, then there exists an up to isomorpmism commutative diagram
\[\begin{tikzcd}
\mathcal{D} \arrow[r, "p"] \arrow[d, hook] & \mathcal{C} \arrow[d, hook] \\
{\asm(B,\psi)} \arrow[r, "F"]              & {\asm(A,\phi)}             
\end{tikzcd}\]
satisfiying the Beck-Chevalley Condition $F^\ast\nabla \simeq \nabla p^\ast$, with $F^\ast=\asm(p^\ast,f)$. Moreover, every such diagram arises, up to isomorphism, in this way.
\end{cor}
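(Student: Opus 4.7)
My plan is to treat the two directions separately, leaning on the previously established machinery, in particular \cref{image_asm} and \cref{cd_is_rightadjoint}.

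For the forward direction, I would start with the computationally dense applicative morphism $(p^\ast,f)$. Since $p$ is a geometric morphism, $p^\ast$ has a right adjoint, so \cref{cd_is_rightadjoint}(i) will supply a right adjoint for $F^\ast := \asm(p^\ast,f)$. The functor $F^\ast$ is regular (as shown within the proof of \cref{image_asm}(i)), hence it is the inverse-image part of a geometric morphism $F\colon\asm(B,\psi)\to\asm(A,\phi)$. Commutativity of the outer square with the inclusions $\nabla$ will follow, at the level of inverse images, from the tautology $\Gamma F^\ast X = |F^\ast X| = p^\ast|X| = p^\ast \Gamma X$. For the Beck--Chevalley condition, I would compute directly: for $X\in\mathcal{C}$, the assembly $F^\ast\nabla X$ has underlying object $p^\ast X$ and realizability relation $f\circ p^\ast(X\times A) = p^\ast X\times f(p^\ast A)$. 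Since $A\in\phi$ (by upwards closure of $\phi$ together with $\mathsf{K}\in\phi$), we have $p^\ast A\in\langle p^\ast(\phi)\rangle$, and requirement (c) for $f$ then yields $f(p^\ast A)\in\psi$. Thus $F^\ast\nabla X$ is a constant object, isomorphic to $\nabla p^\ast X$.

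For the converse direction, I would suppose given a geometric morphism $F\colon\asm(B,\psi)\to\asm(A,\phi)$ whose composite with the inclusion $\mathcal{D}\hookrightarrow\asm(B,\psi)$ is isomorphic to $p$ followed by $\mathcal{C}\hookrightarrow\asm(A,\phi)$, and satisfying $F^\ast\nabla\simeq\nabla p^\ast$. Translating the outer commutativity to inverse images yields $\Gamma F^\ast\simeq p^\ast\Gamma$. Together with $F^\ast\nabla\simeq\nabla p^\ast$ and the regularity of $F^\ast$ (any inverse image preserves finite limits and, having a right adjoint, also regular epis), all three hypotheses of \cref{image_asm}(i) are met. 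Hence $F^\ast$ is naturally isomorphic to $\asm(p^\ast,f)$ for some applicative morphism $(p^\ast,f)\colon(A,\phi)\to(B,\psi)$, which shows that the given diagram arises, up to isomorphism, from $(p^\ast,f)$. Since $F^\ast$ has a right adjoint, so does $\asm(p^\ast,f)$, and \cref{cd_is_rightadjoint}(ii) then guarantees that $(p^\ast,f)$ is computationally dense.

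The main obstacle will be the Beck--Chevalley verification in the forward half: one needs to identify $f(p^\ast A)$ as an element of $\psi$, which hinges on the somewhat subtle combination of $A\in\phi$ with requirement (c) for applicative morphisms. Everything else is essentially a diagram chase combined with invocations of \cref{image_asm} and \cref{cd_is_rightadjoint}, with no genuinely new computation required.
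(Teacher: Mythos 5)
Your proposal is correct and follows exactly the route the paper intends for this (unproved) corollary: the forward direction combines \cref{cd_is_rightadjoint}(i) with the regularity of $\asm(p^\ast,f)$ and its preservation of constant objects (giving $F^\ast\nabla\simeq\nabla p^\ast$, with $f(p^\ast A)\in\psi$ justified as you say), and the converse applies \cref{image_asm}(i) followed by \cref{cd_is_rightadjoint}(ii). No essential differences from the paper's approach.
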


In the case of classical PCAs, it is also known when the geometric morphism induced by a computationally dense applicative morphism is an inclusion (e.g., \cite{jaap},  Proposition 2.6.2).
\begin{prop}
Let $f\colon(A,C)\to (B,D)$ be a computationally dense applicative morphism, and suppose that $\mathsf{m}\in D$ witnesses the computational density of $f$. Then the induced geometric morphism $\asm(A,C)\to\asm(B,D)$ is an inclusion if and only if there exists an $\mathsf{e}\in D$ satisfying:
\[\forall b\in B\s (\mathsf{e}b\denotes\wedge\s \exists a\in A\s (f(a,\mathsf{e}b)\wedge \mathsf{m}\cdot f(a)\denotes\wedge\ \mathsf{m}\cdot f(a)=b)).\]
\end{prop}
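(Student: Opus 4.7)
The plan is to characterise the inclusion property via the counit of $F\dashv G$, where $F=\asm(f)$ and $G$ is the right adjoint constructed in the proof of \cref{cd_is_rightadjoint}(i). Specialising that construction to $\mathcal{C}=\mathcal{D}=\mathsf{Set}$, $p=q=\id$ and $\mathsf{M}=\{\mathsf{m}\}$, one finds that for an assembly $X\in\asm(B,D)$,
\[|GX|=\{x\in|X|\mid \exists a\in A.\ \forall b\in B.(f(a,b)\to \mathsf{m}b\denotes\wedge\ E_X(x,\mathsf{m}b))\},\]
with $E_{GX}$ the restriction of this defining predicate, $|FGX|=|GX|$, $E_{FGX}(x,c)$ holding precisely when some $a\in A$ satisfies $E_{GX}(x,a)\wedge f(a,c)$, and $\tilde\varepsilon_X\colon FGX\to X$ the set-theoretic inclusion $|GX|\hookrightarrow|X|$ (tracked by $\mathsf{m}$). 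The geometric morphism is an inclusion iff $\tilde\varepsilon_X$ is an isomorphism in $\asm(B,D)$ for every $X$, which decomposes into (a) $|GX|=|X|$ as sets, and (b) the identity map $|X|\to|GX|$ being tracked, as a morphism $X\to FGX$, by some element of $D$.

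For the backward direction, assume $\mathsf{e}\in D$ as in the statement. Given $X$ and $x\in|X|$, pick any $b\in B$ with $E_X(x,b)$ (possible since $E_X$ is total) and apply the hypothesis at $b$ to obtain $a\in A$ with $f(a,\mathsf{e}b)$ and $\mathsf{m}\cdot f(a)=b$. Then every $b'\in B$ with $f(a,b')$ satisfies $\mathsf{m}b'=b$, whence $E_X(x,\mathsf{m}b')$; this shows $x\in|GX|$ and gives (a). The same data establishes (b) uniformly: $\mathsf{e}$ itself tracks the inverse of $\tilde\varepsilon_X$, since from $E_X(x,b)$ we produced $a$ with $E_{GX}(x,a)$ and $f(a,\mathsf{e}b)$, i.e.\ $E_{FGX}(x,\mathsf{e}b)$.

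For the forward direction, I test the isomorphism property at the single assembly $S\in\asm(B,D)$ with $|S|=B$ and $E_S=\delta_B$. Condition (a) at $S$ unfolds to: for every $x\in B$ there exists $a\in A$ with $\mathsf{m}\cdot f(a)\denotes$ and $\mathsf{m}\cdot f(a)=x$. Condition (b) at $S$ then yields some $\mathsf{e}\in D$ such that, for every $b\in B$, $\mathsf{e}b$ is defined and there exists $a\in A$ with $E'_S(b,a)$ and $f(a,\mathsf{e}b)$; unfolding $E'_S(b,a)$ exactly as in (a) gives the required property of $\mathsf{e}$.

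The main obstacle is bookkeeping: one must carefully trace the specialisation of $G$ and $\tilde\varepsilon$ from \cref{cd_is_rightadjoint}(i) so that the predicates $E'_X$ reduce to the transparent form above, and then recognise that the ``object of realisers of $B$'' $S$ is already a universal test case, so that the tracker it produces supplies a single element $\mathsf{e}\in D$ that works uniformly for all $X$.
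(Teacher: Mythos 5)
Your argument is correct, and it is essentially the specialisation to $\mathsf{Set}$ (with $p=q=\id$ and $\mathsf{M}=\{\mathsf{m}\}$) of the proof the paper gives for the generalised proposition immediately following this one: both work with the explicit right adjoint $G$ and counit $\tilde\varepsilon$ from \cref{cd_is_rightadjoint}(i), test at the object of realizers for the ``only if'' direction, and verify that $\mathsf{e}$ uniformly tracks $\tilde\varepsilon_X^{-1}$ for the ``if'' direction. (The paper itself gives no separate proof of this classical case, citing \cite{jaap} instead, so your specialised write-up matches the intended argument.)
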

The appropriate generalization to PCAs is as follows:
\begin{prop}
Let $(p,f)\colon (A,\phi)\to (B,\psi)$ be a computationally dense applicative morphism, suppose that the underlying category $\mathcal{D}$ of $(B,\psi)$ is a topos, and suppose that $p$ has a right adjoint $q$. If $\mathsf{M}\in \psi$ witnesses the computational density of $(p,f)$, then the induced geometric morphism $\asm(B,\psi)\to\asm(A,\phi)$ is an inclusion if and only if:
\begin{itemize}
\item[\textup{(}a\textup{)}]	$p\dashv q\colon \mathcal{D}\to\mathcal{C}$ is an inclusion, and
\item[\textup{(}b\textup{)}]	there exists an $\mathsf{E}\in\psi$ such that
\[\mathcal{D}\models\forall b,e:B\s (\mathsf{E}(e)\to(eb\denotes\wedge\ \exists a: p(A)\s (f(a,eb)\wedge \mathsf{M}\cdot f(a)=b))),\]
where $\mathsf{M}\cdot f(a)=b$ abbreviates $\forall b',m: B\s (f(a,b')\wedge \mathsf{M}(m)\to mb' = b)$.
\end{itemize}
\end{prop}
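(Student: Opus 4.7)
My plan is to prove both implications by leveraging the explicit construction of $G$ from the proof of \cref{cd_is_rightadjoint}(i); condition (a) will be used in both directions to turn the pullback square \eqref{useful_diagram} into a genuine identification of $pE'_{GX}$ with $E'_X$.

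For the ($\Leftarrow$) direction, assume (a) and (b). The underlying arrow of $\tilde\varepsilon_X\colon FGX\to X$ is the restriction of $\varepsilon_{|X|}\colon pq|X|\to|X|$ to $|FGX|=p|GX|$. By (a) together with the fact that \eqref{useful_diagram} is a pullback (as visible from its decomposition into the two pullback squares of \eqref{decomp_useful}), the iso $\varepsilon\times\id$ identifies $pE'_{GX}$ with $E'_X$ as subobjects of $|X|\times p(A)$, and hence $p|GX|$ corresponds under $\varepsilon$ to the image of the projection $E'_X\to|X|$. Condition (b) forces this image to be all of $|X|$: reasoning internally in $\mathcal{D}$, for $x:|X|$ pick any $b$ with $E_X(x,b)$ and any $e\in\mathsf{E}$; then (b) produces $a:p(A)$ with $f(a,eb)$ and $\mathsf{M}\cdot f(a)=b$, so every element of $\mathsf{M}\cdot f(a)$ equals $b$ and thus lies in $E_X$ at $x$, giving $E'_X(x,a)$. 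Hence $\tilde\varepsilon_X$ is iso in $\mathcal{D}$, and the same data shows that $\mathsf{E}$ itself tracks its inverse $\alpha_X\colon X\to FGX$: the required $E_{FGX}(\alpha_X(x),eb)$ unpacks, via the identification above, as $\exists a:p(A)\ (E'_X(x,a)\wedge f(a,eb))$, which is delivered by the same $a$.

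For the ($\Rightarrow$) direction, assume $\tilde\varepsilon$ is a natural iso. For (a), apply this to $X=\nabla_\mathcal{D} Y$. Since $E_{\nabla Y}=Y\times B$, the condition $E'_{\nabla Y}(x,a)$ collapses to $\mathsf{M}\cdot f(a)\denotes$, so $E'_{\nabla Y}=Y\times P$ with $P=\{a:p(A)\mid \mathsf{M}\cdot f(a)\denotes\}$. Then $E'_{G\nabla Y}=qY\times\eta_A^{-1}(qP)$, and $|G\nabla Y|$ equals $qY$ if $\eta_A^{-1}(qP)$ is inhabited, and is $0_\mathcal{C}$ otherwise. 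The latter alternative would force $p(0_\mathcal{C})\simeq Y$ in $\mathcal{D}$ for every $Y$, which fails whenever $\mathcal{D}$ is nontrivial (take $Y=0$ and $Y=1$ to get $0\simeq 1$). Hence $|G\nabla Y|=qY$, so $\tilde\varepsilon_{\nabla Y}$ coincides with $\varepsilon_Y$ and is therefore iso, yielding (a).

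For (b), form the assembly $S$ with $|S|$ the object of inhabited subobjects of $B$ (which exists since $\mathcal{D}$ is a topos) and $E_S$ the element relation. The inverse $\alpha_S\colon S\to FGS$ of $\tilde\varepsilon_S$ is a morphism; let $\mathsf{E}\in\psi$ track it. Unpacking this tracker condition internally in $\mathcal{D}$ and again using (a) to identify $p(E_{GS})$ with $E'_S$, we obtain: for all $U:|S|$, $b\in U$ and $e\in\mathsf{E}$, $eb\denotes$ and there exists $a:p(A)$ with $\mathsf{M}\cdot f(a)\subseteq U$ and $f(a,eb)$. Specializing $U$ internally to the inhabited subobject $\{b\}$, the inclusion $\mathsf{M}\cdot f(a)\subseteq\{b\}$ together with the automatic inhabitedness of $\mathsf{M}\cdot f(a)$ is precisely the condition $\mathsf{M}\cdot f(a)=b$ from (b). The main obstacle throughout is recognizing that \eqref{useful_diagram} is actually a pullback and extracting the identification $pE'_{GX}\cong E'_X$ under (a); once this is in place, both directions reduce to careful bookkeeping between the definitions of $E'_X$, trackability, and the two clauses of the statement.
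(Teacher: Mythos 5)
Your overall strategy coincides with the paper's: both directions run through the explicit right adjoint $G$ from the proof of \cref{cd_is_rightadjoint}(i), condition (a) is used to turn the square \eqref{useful_diagram} into an identification of $pE'_{GX}$ with $E'_X$, and $\mathsf{E}$ arises as (respectively serves as) a tracker of $\tilde\varepsilon^{-1}$. Your ($\Leftarrow$) direction is sound, and so is your extraction of (b): you track $\tilde\varepsilon_S^{-1}$ on the assembly $S$ of inhabited subobjects and then specialize to internal singletons, where the paper tracks $\tilde\varepsilon_R^{-1}$ on the object of realizers $R$ (with $|R|=B$ and $E_R=\delta_B$); these are the same computation, since $E'_R(b,a)$ literally unwinds to the abbreviation $\mathsf{M}\cdot f(a)=b$.

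The one genuine gap is in your derivation of (a). The dichotomy ``$|G\nabla Y|$ equals $qY$ if $\eta_A^{-1}(qP)$ is inhabited and is $0_{\mathcal{C}}$ otherwise'' is false in a general regular category $\mathcal{C}$ (which need not even have an initial object): writing $W=\eta_A^{-1}(qP)$, the object $|G\nabla Y|$ is the pullback along $qY\to 1$ of the subterminal object $\mathrm{im}(W\to 1)$, and when that subterminal is neither $1$ nor $0$ you get a proper nonzero subobject of $qY$, so your reductio (``$p(0_\mathcal{C})\simeq Y$ for all $Y$'') never gets off the ground. The step is repairable, because $W$ is in fact \emph{always} inhabited: applying computational density to $U=\mathsf{I}$ yields $V\in\phi$ with $V\!A\denotes$ and $\mathsf{M}\cdot f(p(V)\cdot p(A))\denotes$, i.e.\ $p(V\!A)\subseteq P$, hence $V\!A\subseteq \eta_A^{-1}(qP)$, and $V\!A$ is inhabited. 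So $|G\nabla Y|=qY$ unconditionally, the underlying arrow of $\tilde\varepsilon_{\nabla Y}$ is $\varepsilon_Y$, and your conclusion that $\varepsilon$ is an isomorphism goes through. (The paper sidesteps this computation entirely: since $\Gamma\circ F\simeq p\circ\Gamma$ and $F\circ\nabla\simeq\nabla\circ p$, the geometric morphism $p\dashv q$ sits under $F\dashv G$ via the inclusions $\Gamma\dashv\nabla$, so an inclusion upstairs forces one downstairs.)
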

\begin{proof}
We write $F=\asm(p,f)$, and we let $G$ be the right adjoint of $F$ as constructed in the proof of \cref{cd_is_rightadjoint}(i). We also write $\eta$ and $\varepsilon$ for the unit resp.\@ counit of $p\dashv q$, and we write $\tilde\eta$ and $\tilde\varepsilon$ for the unit resp.\@ counit of $F\dashv G$ as constructed in the proof of \cref{cd_is_rightadjoint}(i).

First, suppose that $F\dashv G$ is an inclusion. Since we have the commutative diagram
\[\begin{tikzcd}[column sep=large]
\mathcal{D} \arrow[r, "p\s\dashv\s q"] \arrow[d, hook] & \mathcal{C} \arrow[d, hook] \\
{\asm(B,\psi)} \arrow[r, hook, "F\s\dashv\s G"]              & {\asm(A,\phi)}             
\end{tikzcd}\]
of geometric morphisms, this implies that $p\dashv q$ must be an inclusion as well, i.e., $\varepsilon$ is an isomorphism. Now consider the object of realizers $R$ in $\asm(B,\psi)$. In the diagram
\[\begin{tikzcd}
{|FGR|} \arrow[rd, "\tilde\varepsilon_R"'] \arrow[r, phantom, "=" description] & p|GR| \arrow[r, hook] & pqB \arrow[ld, "\varepsilon_B"] \\
                                                                    & B                     &                                
\end{tikzcd}\]
both $\varepsilon_B$ and $\tilde\varepsilon_R$ are isomorphisms, so the inclusion $p|GR|\mono pqB$ is in fact the identity, and $\varepsilon_B = \tilde\varepsilon_R$ (as arrows in $\mathcal{D}$). Now let $\mathsf{E}\in\psi$ be a tracker of $\tilde\varepsilon_R^{-1}\colon R\to FGR$, and reason internally in $\mathcal{D}$. Let $m,e\in B$ and suppose that $\mathsf{E}(e)$. Then $eb\denotes$, and $E_{FGR}(\tilde\varepsilon_R^{-1}(b),eb)$, whence also $E_{FGR}(\varepsilon_B^{-1}(b),eb)$. This means that these exists an $a\in p(A)$ such that $p(E_{GR})(\varepsilon_B^{-1}(b),a)$ and $f(a,eb)$. According to \eqref{useful_diagram}, the former implies that $E'_R(\varepsilon_B(\varepsilon^{-1}_B(b)),a)$, i.e., $E'_R(b,a)$. If $m,b'\in B$ are such that $\mathsf{M}(m)$ and $f(a,b')$, then the definition of $E'_R$ yields that $mb'\denotes$ and $E_R(b,mb')$, i.e., $mb'= b$. This shows that $\mathsf{E}$ has the desired property.

Conversely, suppose that (a) and (b) hold, say that (b) is witnessed by $\mathsf{E}\in \psi$, and let $X$ be an assembly over $(B,\psi)$. First of all, consider the diagram \eqref{decomp_useful}. The left-hand square is a pullback, since it is the image of a pullback diagram under $p$. But since $p\dashv q$ is an inclusion, we know that $p\eta$ is an isomorphism, from which it follows that the arrow $pE'_{GX}\to pq E'_X$ is an isomorphism as well. Since all the $\varepsilon$s are isomorphisms as well, we conclude that the top and bottom arrows in the diagram \eqref{useful_diagram} are isomorphisms. In other words, we have
\[\mathcal{D}\models \forall x:pq(|X|)\s \forall a:p(A)\s (p(E'_{GX})(x,a)\leftrightarrow E'_X(\varepsilon_{|X|}(x),a)).\]
Now reason inside $\mathcal{D}$. Suppose that we have $x\in |X|$ and $b,e\in B$ such that $\mathsf{E}(e)$ and $E_X(x,b)$. Then $eb\denotes$, and there exists an $a\in p(A)$ such that $f(a,eb)$ and $\mathsf{M}\cdot f(a)=b$. Since $E_X(x,b)$, the latter implies that $\mathsf{M}\cdot f(a)\subseteq E_X(x)$, in other words, that $(x,a)\in E'_X(x,a)$. Using the above, this implies that $p(E'_{GX})(\varepsilon^{-1}_{|X|}(x),a)$. So we have shown:
\begin{equation}\label{wat}
\mathcal{D}\models \forall x:|X|\s \forall b,e: B\s (\mathsf{E}(e)\wedge E_X(x,b)\to(eb\denotes\wedge\ \exists a:p(A)\s(f(a,eb)\wedge p(E'_{GX})(\varepsilon^{-1}_{|X|}(x),a))))
\end{equation}
Again, reason inside $\mathcal{D}$ and suppose that $y\in pq(|X|)$. Then $\varepsilon_{|X|}(y)\in |X|$, and since $X$ is an assembly and $\mathsf{E}$ is inhabited, there exist $b,e\in B$ such that $\mathsf{E}(e)$ and $E_X(\varepsilon_{|X|}(y),b)$. Using \eqref{wat}, we find that there is an $a\in p(A)$ such that $p(E'_{GX})(y,a)$. Since $p$ is regular, we have
\[p(|GX|) = \{y\in pq(|X|)\mid \exists a\in p(A)\s (p(E'_{GX})(y,a))\},\]
so we can conclude that $y\in p(|GX|)$ as well. In other words, the inclusion $p(|GX|)\mono pq(|X|)$ is in fact an isomorphism. Modulo this isomorphism, we have that $\tilde\varepsilon_X = \varepsilon_{|X|}$. To finish the proof, we will show that $\mathsf{E}$ tracks $\varepsilon^{-1}_{|X|}\colon |X| \to pq(|X|)\simeq p(|GX|) = |FGX|$ as a morphism $X\to FGX$.

Since
\[\begin{tikzcd}
pE_{GX} \arrow[r, hook] \arrow[d, hook] & pE'_{GX} \arrow[d, hook] \\
p|GX|\times pA \arrow[r, hook]          & pq|X|\times pA          
\end{tikzcd}\]
is a pullback diagram (being the image of a pullback diagram under $p$), we see that the inclusion $p(E_{GX})\mono p(E'_{GX})$ is also an isomorphism. Now reason inside $\mathcal{D}$ and suppose that we have $x\in |X|$ and $b,e\in B$ such that $\mathsf{E}(e)$ and $E_X(x,b)$. Then by \eqref{wat} and the observation we just made, we see that $eb$ is defined, and that there is an $a\in p(A)$ such that $f(a,eb)$ and $p(E_{GX})(\varepsilon^{-1}_{|X|}(x),a)$. But the latter just means that $E_{FGX}(\varepsilon^{-1}_{|X|}(x),eb)$, as desired.
\end{proof}

\end{document}